\title[Fatou-Bieberbach domains ]{Fatou-Bieberbach domains as basins of attraction of Automorphisms tangent to the Identity}
\author{Liz Raquel Vivas}
\date{July 6, 2009}
\newtheorem{theorem}{Theorem}
\newtheorem{lemma}{Lemma}
\newtheorem{proposition}{Proposition}
\newtheorem{corollary}{Corollary}
\theoremstyle{definition}
\newtheorem{defin}{Definition}
\theoremstyle{remark}
\newtheorem{remark}{Remark}
\newcommand{\NN}{\mathbb{N}}
\newcommand{\CC}{\mathbb{C}}
\newcommand{\PP}{\mathbb{P}}
\newcommand{\R}{\mathcal{R}}
\newcommand{\Id}{\mathrm{Id}}
\newcommand{\Arg}{\textrm{Arg}}
\newcommand{\xra}{\xrightarrow}
\def\e{{\epsilon}}
\begin{document}

\bibliographystyle{plain}

\begin{abstract}
We prove that there exists an automorphism of $\CC^2$ tangent to the identity with a domain of attraction to the origin, biholomorphic to the origin, along a degenerate characteristic direction. 
\end{abstract}

\maketitle

\section{Introduction}

In the 1920's, Fatou and Bieberbach proved the existence of proper subdomains of $\CC^2$ that are biholomorphically equivalent to $\CC^2$, nowadays known as Fatou-Bieberbach domains. Their examples were the basins of attraction of automorphisms of $\CC^2$ with more than one fixed point. In fact, the basin of attraction of an \textit{attracting} fixed point of an automorphism of $\CC^k$ is always biholomorphic to $\CC^k$, with the fixed point in the interior of the domain. A complete proof of this fact, as well as a variety of examples, was given by Rosay and Rudin \cite{ro-ru}. In the same article the authors posed several questions about Fatou-Bieberbach domains. In recent years several mathematicians have made progress towards solving some of these questions (see for example work by Peters and Wold\cite{pe-wo}, Peters \cite{pe1}, Stens\o nes \cite{st}, Wold \cite{wo1}, \cite{wo2}). 

One of the most interesting remaining open questions is the following: Does there exist a Fatou-Bieberbach domain in $\CC^2$ that avoids the set $\{zw = 0\}$? In the aforementioned paper, Rosay and Rudin proved that it is possible to avoid one complex line. It has also been proved by  Green \cite{gr}, that a non-degenerate image of $\CC^2$ cannot avoid three complex lines. The answer for the case of two lines still remains open. In this paper we find a Fatou-Bieberbach domain in $\CC^2$ that avoids one axis and one image of $\CC$ in $\CC^2$ that is locally tangent to an arbitrarily high order to the other axis (see Theorem \ref{degbasinexample}). We will make these statements more precise after we introduce the definitions at the end of this section.

As a step towards finding (counter)examples for open questions about Fatou-Bieberbach domains, it is natural to look for methods to construct such domains. In 1997 Weickert \cite{we}, and then Hakim \cite{hak1} found an entire new class of Fatou-Bieberbach domains. The construction goes as follows: Let $F$ be an automorphism of $\CC^k$ tangent to the identity at a fixed point $p$ (i.e. $F(p) = p$ and $DF(p) = \Id$). Under some hypotheses on the 2-jet of the automorphism there will exist a \textit{non-degenerate characteristic direction} $[v] \in \PP^{k-1}$ (we should think of $v$ as a vector in $\CC^k$ and of $[v]$ as the natural image in $\PP^{k-1}$). Define the basin of attraction of $p$ along $[v]$ as follows:
$$\Omega_{p,[v]} = \{z \in \CC^k \mid \lim_{n \to \infty}f^n(z) = p;  \lim_{n \to \infty}[f^n(z)] = [v] \}.$$
Then depending on a certain index $r_{[v]}$ associated to this direction we have that $\Omega$ is biholomorphic to $\CC^k$, and $p$ is in the boundary of $\Omega$. 

One strategy to find a Fatou-Bieberbach domain that avoids two lines is to consider an automorphism of $\CC^2$ for which the origin is a neutral fixed point and restricts to the identity on the coordinate axes. Then, if we could satisfy the extra assumptions needed in Hakim's theorem we could find a Fatou-Bieberbach domain attracted to the origin that avoid both axes. In \cite{vi}, we prove that, assuming a well-known conjecture, this will never be the case. In particular we prove that, under the assumptions of Hakim's theorem the number $r_{[v]}$ is always negative and therefore there will not be an open domain attracted to the origin.

Nonetheless, in this paper we prove that there exist automorphisms of $\CC^2$ tangent to the indentity at a fixed point admitting basins of attraction along \textit{degenerate} characteristic directions. These automorphisms, however, do not preserve both axes.

The local dynamics of maps tangent to the identity at a fixed point have been studied with a lot of success in the last years by Weickert \cite{we}, Hakim \cite{hak1}, Abate \cite{ab1} and Abate-Bracci-Tovena \cite{ab-br-to}. Our work is also a contribution in this direction, because it features dynamical behavior that has not been seen before. 

From now on, all the maps we consider have the origin as the fixed point, and when we say tangent to identity we mean it is tangent to the identity at the origin. Then Weickert proved \cite[Thm. 1]{we}:

\begin{theorem}\label{wetheom}
There exist automorphisms of $\CC^2$ tangent to the identity with an invariant domain of attraction to the origin, biholomorphic to $\CC^2$, on which the automorphism is biholomorphically conjugate to the map
\[
(x,y) \to (x-1,y)
\]
\end{theorem}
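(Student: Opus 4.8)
The plan is to realize, completely explicitly, an automorphism $F$ fitting the scheme described above --- tangent to the identity, with a non-degenerate characteristic direction $[v]$ and a positive-index transverse multiplier --- and then to read the model $(x,y)\mapsto(x-1,y)$ off the Fatou coordinates one builds along the way. For $F$ I would take a composition of two shears, $F=S_2\circ S_1$ with $S_1(z,w)=(z+p(w),w)$ and $S_2(z,w)=(z,w+q(z))$, where $p$ and $q$ are polynomials with no constant or linear term; any such $F$ is a polynomial automorphism of $\CC^2$ tangent to the identity at $0$. If $p(w)=a_2w^2+\cdots$ and $q(z)=b_2z^2+\cdots$, then the quadratic part of $F$ is $(a_2w^2,b_2z^2)$, whose characteristic directions $[a:b]$ satisfy $(a/b)^3=a_2/b_2$; none of them is $[1:0]$ or $[0:1]$, and each is non-degenerate as soon as $a_2,b_2\ne0$. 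After a linear change of coordinates carrying one such direction to $[1:0]$ and a rescaling, the slow dynamics of $F$ is along the $z$-axis, with restriction $z\mapsto z-z^2+O(z^3)$, while along an orbit converging to $0$ tangentially to $[1:0]$ the transverse coordinate obeys $w_{n+1}=w_n(1-\a z_n+\cdots)$; the remaining freedom in the higher-order coefficients of $p$ and $q$ is spent to force $\Re\a$ to be large, which is the positivity condition on the index $r_{[v]}$ in the general construction.

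Next I would manufacture a petal: a forward-invariant open set $V\subset\CC^2$ of the shape $\{\,\Re(1/z)>R,\ |w|<\e\,|z|^{\s}\,\}$ with $1<\s<\Re\a$, with $R$ large and $\e$ small. Combining the one-variable Leau--Fatou picture in the $z$-variable with the transverse estimate, $V$ is forward invariant, $F^n\to0$ on $V$, $[F^n]\to[1:0]$, and the orbits, written $(z_n,w_n)=F^n(z,w)$, obey the sharp asymptotics $z_n=\tfrac1n\,(1+o(1))$ (after the standard normalization of the parabolic germ) and $|w_n|=O(|z_n|^{\Re\a})$. Put $\O=\bigcup_{n\ge0}F^{-n}(V)$. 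Since $F$ is an automorphism and $V$ is forward invariant, $\O$ is open, $F(\O)=\O$, and $F^n|_\O\to0$ with direction tending to $[1:0]$; this $\O$ is the claimed invariant domain of attraction.

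Now set $T(x,y)=(x-1,y)$ and build a biholomorphism $\Phi=(\phi_1,\phi_2)\colon\O\to\CC^2$ with $\Phi\circ F=T\circ\Phi$. On $V$, take $\phi_1$ to be the incoming Fatou coordinate of the parabolic germ $z\mapsto z-z^2+O(z^3)$ --- so $\phi_1\approx-1/z$ up to a logarithmic term, and by construction it conjugates that germ to $x\mapsto x-1$ --- and take $\phi_2\approx w\,z^{-\a}$, a transverse coordinate the dynamics asymptotically preserves and which is single valued on $V$ because $\log z$ is. Both are realized as suitably normalized locally uniform limits of the iterates, namely $\phi_1=\lim_n\bigl(-1/z_n+n-b\log n\bigr)$ for the appropriate constant $b$ and $\phi_2=\lim_n w_n\,z_n^{-\a}$; these limits exist because, once the logarithmic correction is included, the telescoped increments defining $\phi_1$ and the ratios $(w_{n+1}z_{n+1}^{-\a})/(w_n z_n^{-\a})$ tend to their limits like $1+O(1/n^2)$, hence with summable error, and Cauchy estimates then upgrade this to local uniform convergence of the maps and their derivatives. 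Extend $\Phi$ to all of $\O$ by $\Phi:=T^{-n}\circ\Phi\circ F^n$ on $F^{-n}(V)$; this is well defined by equivariance. The resulting $\Phi$ is holomorphic, satisfies $\Phi\circ F=T\circ\Phi$, and is injective --- it is injective on $V$, and equivariance together with injectivity of $T$ propagates this to $\O$.

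Finally one must check $\Phi(\O)=\CC^2$. The image is open and invariant under $T$, so it is enough to see that $\Phi(V)$ contains $\{(x,y):\Re x<-R_0(|y|)\}$ for some finite function $R_0$: on $V$ the admissible range $|w|<\e|z|^{\s}$ of $w$ shrinks as $z\to0$, but, since $\s<\Re\a$, after division by $z^{\a}$ the range of $\phi_2\approx w\,z^{-\a}$ \emph{grows} and fills $\CC$. Given any $(x_0,y_0)\in\CC^2$, choose $n$ so large that $\Re(x_0-n)<-R_0(|y_0|)$, pick $z'\in V$ with $\Phi(z')=(x_0-n,y_0)=T^n(x_0,y_0)$, and set $z:=F^{-n}(z')\in\O$; then $\Phi(z)=T^{-n}(x_0-n,y_0)=(x_0,y_0)$. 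Hence $\Phi$ is a biholomorphism of $\O$ onto $\CC^2$ conjugating $F|_\O$ to $(x,y)\mapsto(x-1,y)$, which is the theorem. The step I expect to be the main obstacle is the quantitative one: establishing the orbit asymptotics inside $V$ with error terms sharp enough both to force convergence of the normalized iterates to a biholomorphism and to control the transverse spreading used for surjectivity --- equivalently, verifying that the concretely chosen $F$ really does carry a non-degenerate characteristic direction with positive index. This is precisely the analytic content that the Hakim-type statement quoted above isolates axiomatically.
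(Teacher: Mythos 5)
There is a genuine gap in the very first step. You propose to realize a positive director by taking $F=S_2\circ S_1$ with $S_1(z,w)=(z+p(w),w)$ and $S_2(z,w)=(z,w+q(z))$, $p(w)=a_2w^2+\cdots$, $q(z)=b_2z^2+\cdots$. For such an $F$ the full $2$-jet is $P_2(z,w)=(a_2w^2,\,b_2z^2)$, independently of the higher coefficients of $p$ and $q$. The induced self-map of $\PP^1$ in the affine coordinate $u=w/z$ is $u\mapsto b_2/(a_2u^2)$; each of the three non-degenerate characteristic directions $u_0$ (solutions of $u_0^3=b_2/a_2$) has multiplier $\phi'(u_0)=-2$, hence director $\phi'(u_0)-1=-3$. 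Equivalently, with the paper's formula, $r(u)=Q_2(1,u)-uP_2(1,u)=b_2-a_2u^3$ gives $A(v)=r'(u_0)/P_2(1,u_0)=-3$. So the real part of the director is $-3<0$ for \emph{every} choice of $a_2,b_2\neq 0$, and Hakim's positive-director hypothesis is never satisfied in your family. (This is also forced by the holomorphic Lefschetz fixed-point formula on $\PP^1$, $\sum_i 1/(1-\phi'(p_i))=1$: here each summand is $1/3$.) Your subsequent remark that ``the remaining freedom in the higher-order coefficients of $p$ and $q$ is spent to force $\Re\alpha$ to be large'' cannot be right, because the director is an invariant of the $2$-jet only; cubic and higher terms of $p,q$ have no effect on it.

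To make the plan work you must change the quadratic jet, not the tail. Concretely you need a $2$-jet with a $zw$ cross-term in the second coordinate, e.g.\ $P_2(z,w)=(z^2,\lambda zw)$ with $\Re\lambda>1$, so that $[1:0]$ is non-degenerate with director $\lambda-1$ of positive real part. Such a jet cannot arise from a composition of two pure shears of the kind you chose; one needs \emph{overshears} of the form $(z,w)\mapsto(z,we^{g(z)})$ (or $(z,w)\mapsto(ze^{g(w)},w)$), exactly as the paper does in Lemma~\ref{lemma:constructionofF}, where $F$ is built from a composition of shears \emph{and} overshears precisely to control which monomials appear in the $2$-jet. Once you replace the initial construction by one that genuinely yields a non-degenerate characteristic direction with director of positive real part, the rest of your outline---the petal $V$, the Fatou coordinate $\phi_1$, the transverse coordinate $\phi_2\approx w\,z^{-\alpha}$ with summable telescoping errors, equivariant extension to $\Omega$, and the surjectivity argument via the growth of $\phi_2$---is the standard Weickert/Hakim scheme and is sound. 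For reference, the paper itself does not prove Theorem~\ref{wetheom}; it cites Weickert, and the paper's own analogous construction (for Theorem~\ref{degbasinexample}) in fact produces a germ $(z+z^2+\cdots,\,w-zw^2+\cdots)$ whose only non-degenerate characteristic direction $(1,0)$ has director $-1$, so the paper deliberately works with the \emph{degenerate} direction $(0,1)$ instead---a route that is genuinely different from the one you need for Theorem~\ref{wetheom}.
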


Hakim \cite{hak1} proved a more general result. In order to state her theorem, we need to introduce some definitions.

\begin{defin}\label{defin:chardir}
A \textit{parabolic curve} or an \textit{invariant piece of curve} for $F$ at the origin is an injective holomorphic map $\phi: \Delta \to \CC^n$ satisfying the following properties:
\begin{itemize}
\item $\Delta$ is a simply connected domain in $\CC$ with $0 \in \partial \Delta$;
\item $\phi$ is continuous at the origin, and $\phi(0)= O$; 
\item $\phi(\Delta)$ is invariant under $F$, and $\left(F^n|_{\phi(\Delta)}\right) \to O$ as $n \to \infty$.
\end{itemize}
Furthermore, if $[\phi(\zeta)] \to [v] \in \PP^{n-1}$ as $\zeta \to 0$ (where $[.]$ is the projection of $\CC^n \backslash \{O\}$ onto $\PP^{n-1}$), we say that \textit{$\phi$ is tangent to $[v]$ at the origin}. Writing 
\begin{equation*}
F(z) = z + P_k(z) + P_{k+1}(z) + ...
\end{equation*}
where for each $h \in \NN, h \geq k$, $P_h$ is a homogeneous polynomial function of degree $h$ from $\CC^n$ to $\CC^n$, and $P_k \not \equiv 0$. Then $k = \nu(F)$ is the \emph{order} of $F$.

A \textit{characteristic direction} for $F$ is a vector $[v] \in \PP^{n-1}$ such that there is $\lambda \in \CC$, so that $P_k(v) = \lambda v$. If $\lambda \neq 0$, we say that $v$ is \textit{non-degenerate}; otherwise, it is \textit{degenerate}. Note that $[v]$ is a non-degenerate characteristic direction if the induced map of $P_k$ on $\PP^{n-1}$ has $[v]$ as a fixed point.
\end{defin}

Then Hakim proved the following result \cite[Thm. 1.3]{hak1}:

\begin{theorem}\label{nondegcurve}
Let $F$ be a germ of an analytic transformation from $\CC^k$ fixing the origin and tangent to the identity. For every nondegenerate characteristic direction $[v]$ of $F$, there exists a parabolic curve, tangent to $[v]$ at zero, attracted to the origin. 
\end{theorem}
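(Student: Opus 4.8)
The plan is to follow the blow-up strategy that is by now standard for germs tangent to the identity. Fix the non-degenerate characteristic direction $[v]$, so $P_k(v)=\lambda v$ with $k=\nu(F)$ and $\lambda\neq0$. A linear change of coordinates puts $v=e_1$, and, writing points of the ambient $\CC^{n}$ as $z=(x,w)\in\CC\times\CC^{n-1}$, one may, after the dilation $z\mapsto\mu z$ with $\mu^{k-1}=-1/\lambda$, assume $\lambda=-1$. Now carry out the directional blow-up $(u,s)=(x,w/x)$: this biholomorphism conjugates $F$, on $\{x\neq0\}$, to a map
\[
G(u,s)=\bigl(\,u-u^{k}a(u,s),\ s+u^{k-1}b(u,s)\,\bigr),
\]
with $a,b$ holomorphic near the origin, $a(0,0)=1$, and---this is precisely where the hypothesis that $[v]$ is a characteristic direction enters---$b(0,0)=0$. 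Writing $Q(s):=b(0,s)$ one has $Q(0)=0$ with $Q$ Lipschitz near $0$; the linear part $DQ(0)$ is the matrix of directors of $[v]$, which will enter the estimates but on whose spectrum nothing is assumed.

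For $s$ near $0$ the $u$-component of $G$ is a small uniform perturbation of a one-dimensional parabolic germ of order $k$, so by the Leau--Fatou theory there is an attracting petal, i.e.\ a sector $V=\{\,|u|<\delta,\ |\arg u|<\tfrac{\pi}{2(k-1)}-\eta\,\}$ which $u\mapsto u-u^{k}a$ maps into itself with iterates tending to $0$. In the coordinate $\zeta=\bigl((k-1)u^{k-1}\bigr)^{-1}$ the sector $V$ becomes a sector $W$ of half-opening $<\pi/2$ far from the origin, and $G$ acquires the near-translation form
\[
\tilde\zeta=\zeta+1+O(|\zeta|^{-1/(k-1)}),\qquad
\tilde s=s+\frac{Q(s)+O(|\zeta|^{-1/(k-1)})}{(k-1)\,\zeta},
\]
so that any orbit in $W$ satisfies $\zeta_n=n+O(\log n)$ and $u_n\to0$. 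I would then look for the parabolic curve as a bounded holomorphic graph $s=\psi(\zeta)$ over $W$, invariant under $G$, with $\psi(\zeta)\to0$ as $\zeta\to\infty$. Pulling such a graph back through $\zeta\mapsto u$ and then through the blow-up produces an injective holomorphic $\phi$ on a sector $\Delta$ with $0\in\partial\Delta$, continuous at $0$ with $\phi(0)=O$, with $\phi(\Delta)$ invariant under $F$ and attracted to $O$; and it is tangent to $[e_1]=[v]$ precisely because $s_n=\psi(\zeta_n)\to0$ forces $[x_n:w_n]=[1:s_n]\to[e_1]$.

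To construct $\psi$ I would recast the graph-invariance equation $\psi(\tilde\zeta)=\tilde s$ as a fixed-point problem and solve it by a discrete variation-of-constants (telescoping) formula: letting $E_n$ be the product along an orbit of the transverse multipliers $\Id+\frac{DQ(0)+o(1)}{(k-1)\zeta_j}$---so that componentwise in an eigenbasis of $DQ(0)$ with eigenvalue $\alpha$ one has $|E_n|\asymp n^{\operatorname{Re}\alpha/(k-1)}$---and observing that the inhomogeneous defect of the zero section is $O(|\zeta|^{-1-1/(k-1)})$ because $|u(\zeta)|\asymp|\zeta|^{-1/(k-1)}$, the section $\psi$ is obtained as an absolutely convergent series in these quantities. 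The main obstacle---and the content that goes beyond the classical one-dimensional picture---is the bookkeeping that forces $s_n\to0$ with \emph{no} hypothesis on the directors: in an eigendirection with $\operatorname{Re}\alpha\ge0$ the value $\psi(\zeta_0)$ is pinned down by demanding $\sum_m\mathrm{error}_m/E_m$ converge, which it does since its terms decay like $m^{-1-1/(k-1)-\operatorname{Re}\alpha/(k-1)}$, yielding $|s_n|=O(n^{-1/(k-1)})$; in an eigendirection with $\operatorname{Re}\alpha<0$ the value is free, because the at-most-polynomial growth of the partial sums is always dominated by the decay of $E_n$, again giving $|s_n|=O(n^{-1/(k-1)})$. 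It then remains to check that the section so obtained depends holomorphically on $\zeta_0$ (the series converges locally uniformly---equivalently, the graph transform is a contraction in a suitably weighted sup-norm on holomorphic sections over $W$), that the $u$-dynamics genuinely confines orbits to $V$, and that $\phi$ satisfies the injectivity and boundary conditions in the definition of a parabolic curve.
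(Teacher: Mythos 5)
The paper does not prove Theorem~\ref{nondegcurve}; it is quoted from Hakim~\cite{hak1} (her Theorem~1.3) and used as a black box, most notably in Proposition~\ref{prop:gamma}. Your sketch follows Hakim's own strategy --- normalization to $\lambda=-1$, directional blow-up, Fatou coordinate, and construction of the invariant graph by a variation-of-constants/contraction argument on holomorphic sections --- so the approach is the standard one and is sound, and the comparison is really with~\cite{hak1} rather than with this paper.

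Two corrections are needed in a written-out version. First, with your normalization $p(1,0)=\lambda=-1$, one has $D(P_k)_{[v]}-\Id = DQ(0)/p(1,0) = -DQ(0)$, so $DQ(0)$ is the \emph{negative} of the director matrix and the eigenvalue $\alpha$ in your estimate $|E_n|\asymp n^{\operatorname{Re}\alpha/(k-1)}$ is $-1$ times a director. Since Theorem~\ref{nondegcurve} is unconditional this is only a bookkeeping slip, but it reverses the dichotomy if matched against Theorem~\ref{haktheo}: strictly positive real parts of the directors there is exactly your ``free'' case $\operatorname{Re}\alpha<0$. Second, the uniform rate $|s_n|=O(n^{-1/(k-1)})$ in the free eigendirections is an overclaim: for an eigenvalue with $-1<\operatorname{Re}\alpha<0$ and a generic initial condition one only gets $|s_n|\asymp n^{\operatorname{Re}\alpha/(k-1)}$, which decays more slowly; only $s_n\to0$ is actually needed for tangency and confinement, so this is harmless but the stated rate should be dropped or restricted. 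Finally, the steps you flag as remaining --- above all, showing the graph transform is a contraction in a weighted sup-norm on holomorphic sections over $W$, which is what breaks the circularity between the $\zeta$-orbit and the section built by summing along it --- are the actual substance of Hakim's argument rather than afterthoughts; as an outline, though, yours is correct.
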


In order to have \emph{open} regions attracted to the origin, though, we need to look at one more invariant associated to the non-degenerate characteristic direction. We follow Abate's exposition \cite{ab2} in order to define this invariant, because it is more illustrative than the original definition:

\begin{defin}\label{director}
Given $F$ a germ of an analytic transformation of $\CC^k$ fixing the origin and tangent to the identity, and a non-degenerate characteristic direction $[v] \in \PP^{k-1}$, the eigenvalues $\alpha_1, \alpha_2, \dots, \alpha_{k-1} \in \CC$ of the linear operator $D(P_k)_{[v]} - \Id: T_{[v]}\PP^{k-1} \to T_{[v]}\PP^{k-1}$ are the \textit{directors} of $[v]$. 
\end{defin}

Hakim also proves \cite{hak1}:

\begin{theorem}\label{haktheo}
Let $F$ be an automorphism of $\CC^2$ tangent to the identity. Let $[v]$ be a non-degenerate characteristic direction at $0$. Assume that the real parts of all the directors of $[v]$ are strictly positive. Let $\Omega$ be defined by
\[
\Omega = \{ z \in \CC^k \backslash \{0\}: \lim_{n \to \infty} z_n = 0, \lim_{n \to \infty} [z_n] = [v] \}
\]
Then $\Omega$ is biholomorphic to $\CC^2$. In this domain the automorphism is biholomorphically conjugate to the map
\[
(x,y) \to (x-1,y)
\]
\end{theorem}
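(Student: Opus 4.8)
The strategy is to adapt the one-variable Leau--Fatou flower construction to the presence of the transverse variable, using the positivity of the directors to keep that variable under control; the scheme refines the parabolic curve of Theorem~\ref{nondegcurve}. Write $\nu=\nu(F)\ge 2$ for the order. First I would choose coordinates: after a linear change assume $[v]=[1:0]$ and, after rescaling, $P_\nu(1,0)=(-1,0)$; in $\CC^2$ there is a single director, which I denote $\alpha$ (the argument is identical in $\CC^k$ provided every director has positive real part). In the blow-up chart $u=y/x$ the map becomes
\[
x_1 = x\bigl(1-x^{\nu-1}+O(x^\nu)+O(u\,x^{\nu-1})\bigr),\qquad
u_1 = u\bigl(1-\alpha\,x^{\nu-1}+\cdots\bigr)+O(x^{\nu}),
\]
and the Leau variable $w=\bigl((\nu-1)x^{\nu-1}\bigr)^{-1}$ straightens the first line to $w_1=w+1+O(1/w)$ while turning the second into $u_1=u\bigl(1-\tfrac{\alpha}{(\nu-1)w}+\cdots\bigr)$; I work in a sector where $x\mapsto x^{\nu-1}$ is injective.

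Next I would produce an absorbing petal. For suitable small $\tau,\delta>0$ and large $R$, set (in $(x,y)$-coordinates via $w$)
\[
\mathcal P=\bigl\{(w,u): \operatorname{Re}w>R,\ |\operatorname{Im}w|<\tau\operatorname{Re}w,\ |u|<\delta\bigr\},
\]
the narrowness $\tau$ being chosen so that $\operatorname{Re}\bigl(\alpha x^{\nu-1}\bigr)>0$ on $\mathcal P$. A direct computation then gives $F(\mathcal P)\subset\mathcal P$, so the sets $F^{-n}(\mathcal P)$ increase. On the petal $w_n=n+O(\log n)$, hence $x_n\to0$; and since $x_n^{\nu-1}\sim\frac1{(\nu-1)n}$ lies in a narrow sector about $\RR_{>0}$, the hypothesis $\operatorname{Re}\alpha>0$ yields
\[
|u_{n+1}|\le|u_n|\bigl(1-\tfrac{c}{n}\bigr)+O\bigl(n^{-\nu/(\nu-1)}\bigr),\qquad c=\tfrac{\operatorname{Re}\alpha}{\nu-1}>0,
\]
so $u_n\to0$, in fact $|u_n|=O(n^{-c+\epsilon})$. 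Thus every orbit starting in $\mathcal P$ tends to the origin tangent to $[v]$, i.e.\ $\mathcal P\subset\Omega$; conversely, one checks that every orbit converging to the origin tangent to $[v]$ eventually enters $\mathcal P$, so $\Omega=\bigcup_{n\ge0}F^{-n}(\mathcal P)$.

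Then I would build a Fatou coordinate $\Phi=(\phi_1,\phi_2)\colon\mathcal P\to V:=\Phi(\mathcal P)$ conjugating $F$ to $T(X,Y)=(X-1,Y)$. For the first component take $\phi_1(z)=-\varphi(w(z))$, where $\varphi(w)=\lim_n\bigl(w_n-n-b\log n\bigr)$ is the classical one-variable Fatou coordinate of $w\mapsto w+1+O(1/w)$ (so $\varphi(w_1)=\varphi(w)+1$ and $\varphi(w)=w+O(\log w)$ is injective on the petal). For the second take $\phi_2(z)=\lim_n u_n\,w_n^{\alpha/(\nu-1)}$; the telescoping identity
\[
u_n\,w_n^{\alpha/(\nu-1)}=u_0\,w_0^{\alpha/(\nu-1)}\prod_{k<n}\bigl(1+O(k^{-1-1/(\nu-1)})\bigr)
\]
shows the limit exists, and $\phi_1\circ F=\phi_1-1$, $\phi_2\circ F=\phi_2$ follow at once from the definitions. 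Cauchy estimates on a slightly larger petal show $D\Phi$ is everywhere invertible, and the same estimates give that $\Phi$ is injective, hence a biholomorphism onto $V$. The positivity of $\operatorname{Re}\alpha$ enters a second time through the shape of $V$: since $\phi_1(z)=-w(z)+O(\log|w(z)|)$ and $|\phi_2(z)|$ is comparable to $|u(z)|\,|w(z)|^{\operatorname{Re}\alpha/(\nu-1)}$ with $|u|<\delta$ ranging over the whole disk, $V$ contains a region $\{\operatorname{Re}X<-R'\}\cap\{|Y|<\rho(|\operatorname{Re}X|)\}$ with $\rho(t)\to\infty$ as $t\to\infty$.

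Finally I would globalize: define $\widetilde\Phi\colon\Omega\to\CC^2$ by $\widetilde\Phi(z)=\Phi(F^n z)+n\,e_1$ for any $n$ with $F^nz\in\mathcal P$. The conjugacy relation makes this independent of $n$, so $\widetilde\Phi$ is a well-defined holomorphic local biholomorphism, injective (if $\widetilde\Phi(z)=\widetilde\Phi(z')$ then $\Phi(F^nz)=\Phi(F^nz')$ for $n$ large, forcing $z=z'$) and satisfying $\widetilde\Phi\circ F=T\circ\widetilde\Phi$, whose image is $\bigcup_{n\ge0}(V+n\,e_1)$ since $F^n\colon F^{-n}(\mathcal P)\to\mathcal P$ is a bijection. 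Because $\rho(t)\to\infty$, for any $(X_0,Y_0)$ the translate $V+n\,e_1$ contains it once $\operatorname{Re}X_0-n<-R'$ and $|Y_0|<\rho(n-\operatorname{Re}X_0)$, both true for $n$ large; hence $\bigcup_n(V+n\,e_1)=\CC^2$ and $\widetilde\Phi\colon\Omega\to\CC^2$ is a biholomorphism conjugating $F|_\Omega$ to $(x,y)\mapsto(x-1,y)$. The heart of the argument --- and the only place the hypothesis $\operatorname{Re}\alpha>0$ is truly indispensable --- is the third step: convergence of the limits defining $\Phi$ (especially the transverse one), injectivity of $\Phi$, and the precise description of $V$ that makes its translates sweep out all of $\CC^2$ rather than a tube $\CC\times(\text{bounded set})$.
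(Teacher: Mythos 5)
The paper does not prove this statement; Theorem~\ref{haktheo} is quoted from Hakim \cite{hak1}, so there is no in-paper argument to compare against. Your sketch follows the broad outline of Hakim's proof (blow-up chart, absorbing petal, Fatou coordinate, globalization by translates), and the last two steps --- the shape of $V$, the sweep by the translates $V+ne_1$, the well-definedness and injectivity of $\widetilde\Phi$ --- are in order. The gap is in the construction of the transverse coordinate $\phi_2$.

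You set $\phi_2(z)=\lim_n u_n\,w_n^{\alpha/(\nu-1)}$ and argue convergence via the telescoping product $u_n\,w_n^{\alpha/(\nu-1)}=u_0\,w_0^{\alpha/(\nu-1)}\prod_{k<n}\bigl(1+O(k^{-1-1/(\nu-1)})\bigr)$. But this identity drops the additive term $O(x^\nu)$ that appears in your own expression for $u_1$: the $u$-recursion is affine, not multiplicative. That additive contribution at step $n$, once weighted by $w_n^{\alpha/(\nu-1)}\sim n^{\operatorname{Re}\alpha/(\nu-1)}$, has size $n^{(\operatorname{Re}\alpha-\nu)/(\nu-1)}$, which is summable only when $\operatorname{Re}\alpha<1$. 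Since the hypothesis allows arbitrary $\operatorname{Re}\alpha>0$, for $\operatorname{Re}\alpha\ge1$ the limit defining $\phi_2$ need not exist. The missing preliminary is to straighten the parabolic curve supplied by Theorem~\ref{nondegcurve} before building the linearizing coordinates: after replacing $u$ by $u-\gamma(x)$ (in the paper's notation for the curve), the additive remainder in the $u$-recursion becomes $O(u^2 x^{\nu-1})$ rather than $O(x^\nu)$, and with your bound $|u_n|=O(n^{-c+\epsilon})$, $c=\operatorname{Re}\alpha/(\nu-1)$, the weighted errors are $O(n^{-1-c+2\epsilon})$, summable for every $\operatorname{Re}\alpha>0$. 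This normalization is an essential, not cosmetic, step in Hakim's argument; without it the transverse Fatou coordinate is not even well-defined across the full range of admissible directors, and the growth estimate for $|Y|$ that you use to show $\bigcup_n(V+ne_1)=\CC^2$ would also need to be revisited.
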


It turns out that the domain of Weickert's example is the set $\Omega_{(0,V)}$ for a non-degenerate characteristic direction $V$ associated to a positive director.

Although these are very interesting results, they do not say anything for the case of degenerate characteristic directions. In this paper we prove:

\begin{theorem}\label{degbasinexample}
There exists an automorphism $F$ of $\CC^2$ tangent to the identity such that there is an invariant domain $\Omega$ in which every point is attracted to the origin along a trajectory tangent to $v$, where $v$ is a degenerate characteristic direction of $F$ on which the automorphism is biholomorphically conjugate to the map
\[
(x,y) \to (x-1,y).
\]
Moreover, $\Omega$ is a Fatou-Bieberbach Domain and $\Omega \cap \{(0,w) : w\in \CC\} = \emptyset$. Furthermore, there is a biholomorphic copy of $\CC$ injected in $\CC^2$, locally tangent to the $z$-axis, that is entirely contained in the boundary of $\Omega$.
\end{theorem}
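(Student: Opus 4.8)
The plan is to build the automorphism $F$ explicitly, as a polynomial (or entire) automorphism of $\CC^2$ — concretely a short composition of shears $(z,w)\mapsto(z+p(w),w)$ and $(z,w)\mapsto(z,w+q(z))$ whose coefficients depend on an integer parameter $N\gg 1$ — engineered so that several things hold at once: (i) $F$ is tangent to the identity at $0$ and $[1:0]$ is a \emph{degenerate} characteristic direction, i.e.\ $P_k(1,0)=0$ (which forces $P_k=w\cdot Q_{k-1}$ for a homogeneous $Q_{k-1}$, so the transverse variable $w$ enters the leading dynamics only at higher order near the $z$-axis); (ii) in a petal-shaped region around an arc of the $z$-axis the dynamics is governed, in the $z$-coordinate, by a one-dimensional parabolic map $z\mapsto z-z^{k}+\cdots$, so that along orbits there $z_n\sim c\,n^{-1/(k-1)}\to0$; (iii) the coefficients are tuned so the eigenvalue controlling the transverse variable has positive real part, forcing $w_n\to 0$ faster than $z_n$, hence $[z_n:w_n]\to[1:0]$; (iv) the $w$-axis $\{z=0\}$ is invariant (every monomial of the first component of $F$ is divisible by $z$); and (v) $F$ leaves invariant an entire curve $\Gamma_N$ — e.g.\ a graph $w=z^{N}h(z)$ rendered invariant by the choice of $p,q$ — tangent to order $N$ to the $z$-axis, with $0$ a parabolic fixed point of $F|_{\Gamma_N}$.

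Next I would analyze the dynamics on a "paraboloid" $D\subset\CC^2$, a neighbourhood in $\CC^2$ of the petal in the $z$-axis. Because $[1:0]$ is degenerate, the convergence is best seen after a (finite sequence of) blow-up(s) along the $z$-axis: setting $u=w/z^{j}$ for the appropriate $j$, one conjugates $F$ on $D$ to a map to which the arguments behind Hakim's Theorem \ref{haktheo} apply — $z$ still parabolic, $u$ now genuinely contracted thanks to the positivity arranged in (iii). This produces a forward-invariant $D$ with $F(D)\subset D$, $F^n|_D\to 0$, and every orbit in $D$ converging to $0$ tangentially to $[1:0]$. Define $\Omega=\bigcup_{n\ge0}F^{-n}(D)$; it is open, connected, $F$-invariant, and every orbit in it is attracted to $0$ along $v=[1:0]$. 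Since any such orbit is tangent to $[1:0]$, while any orbit starting on $\{z=0\}\setminus\{0\}$ stays on $\{z=0\}$ and hence has $[z_n:w_n]\equiv[0:1]$, and since $0\notin\Omega$, we get $\Omega\cap\{(0,w):w\in\CC\}=\emptyset$.

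To recognise $\Omega$ as a Fatou–Bieberbach domain and obtain the conjugacy to $(x,y)\mapsto(x-1,y)$, I would construct a Fatou coordinate $\Phi=(\phi,\psi):\Omega\to\CC^2$ with $\phi\circ F=\phi-1$ and $\psi\circ F=\psi$: on $D$, $\phi(z,w)=z^{-(k-1)}+(\text{lower order})$ is obtained as the limit regularising the functional equation, and $\psi$ is the limit along the orbit of the renormalised transverse coordinate $u_n$, convergent by the estimates in the blow-up chart; both then extend to all of $\Omega$ via the functional equations. Following the scheme of Weickert \cite{we} and Hakim \cite{hak1}, one shows that $\Phi$ is injective on $D$ from the asymptotics, hence injective on $\Omega$ because $\phi,\psi$ are intertwined with $F$; and that $\Phi(\Omega)=\CC^2$, because $\phi$ maps $D$ onto a right half-plane, $\psi$ maps each fibre of $\phi|_D$ onto $\CC$, and iterating $F^{-1}$ fills out the rest. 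Thus $\Omega$ is biholomorphic to $\CC^2$, is a proper subdomain (it misses the $w$-axis), and $F|_\Omega$ is conjugate to the shift.

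The curve in the boundary is $\Gamma_N$ from (v): an explicit parametrisation exhibits it as a biholomorphic copy of $\CC$ injected in $\CC^2$, tangent to order $N$ to the $z$-axis at $0$; since $N$ is a free parameter, the tangency order is arbitrary. One checks $\Gamma_N\subset\partial\Omega$ by showing $\Gamma_N\setminus\{0\}$ is disjoint from $\Omega$ (it lies on the boundary of the petal region $D$, and its orbits, although parabolic on $\Gamma_N$ itself, do not enter $D$) while lying in the closure of $\Omega$ (points of $\Gamma_N$ are limits of points of $D$); together with $0=F(0)\in\partial\Omega$ this puts all of $\Gamma_N$ in $\partial\Omega$. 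The main obstacle I anticipate is items (i)--(v) taken jointly: exhibiting one honest automorphism of $\CC^2$ that is degenerate at $[1:0]$ with the transverse direction pointing the right way, keeps the $w$-axis invariant with harmless induced dynamics, and simultaneously carries the high-order tangent invariant curve — and then pushing the Fatou-coordinate estimates through the blow-up, where the parabolic rate is only $n^{-1/(k-1)}$ and the errors introduced by the change of coordinates must be controlled uniformly on $D$; establishing $\Phi(\Omega)=\CC^2$ is the other delicate point.
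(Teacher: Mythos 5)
Your plan has the right outer scaffolding (invariant petal, Fatou coordinate, exhaustion, conjugacy to the shift), but the core of it leans on exactly the tools that fail in the degenerate case, and a few structural choices are inconsistent.

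\textbf{The reduction to Hakim via blow-up is the missing theorem, not a usable step.} You propose to set $u=w/z^{j}$ and then ``apply the arguments behind Hakim's Theorem.'' But the simple polynomial change of variables that linearises the transverse dynamics is precisely what is available only for \emph{non}-degenerate directions; in the degenerate case there is in general no such chart, and the whole point of the paper is to replace it. Concretely, the paper starts from $F(z,w)=(z+z^2+\cdots,\ w - zw^2+\cdots)$ with $(0,1)$ degenerate, and the invariant petal has orbit rates $|z_n|\sim 1/n$ while $|u_n|\sim 1/\log n$ (Proposition \ref{prop:degenerateopenregion}). These incommensurate rates — one algebraic, one logarithmic — are the signature of the degenerate case and are incompatible with your picture ``$w_n\to 0$ faster than $z_n$,'' which is the non-degenerate picture. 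Because of those rates, the Abel--Fatou coordinate cannot be produced by the usual telescoping sum after a polynomial conjugacy; the paper instead introduces the auxiliary function $\alpha(x,y)=x\beta(y)$ where $\beta$ solves a first-order linear ODE \eqref{betachange}, and the needed decay estimates for $\beta$ require their own argument. Your proposal skips this step entirely, and it is the heart of the proof.

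\textbf{The roles of the two characteristic directions are tangled.} In the paper the degenerate direction is $(0,1)$ (the basin is tangent to the $w$-axis), while the boundary curve is tangent to the \emph{other}, non-degenerate direction $(1,0)$. That non-degenerate direction is not incidental: Hakim's Theorem \ref{nondegcurve} supplies a parabolic curve $\mathcal{C}=\{(z,\gamma(z))\}$ attached to $(1,0)$, and the new variable $u=w-\gamma(z)$ is what makes the invariance and the ODE argument go through; moreover $\Gamma=\bigcup_{n\ge0}F^{-n}(\mathcal{C})$ is exactly the boundary curve. You instead put the degenerate direction at $[1:0]$ and also ask the boundary curve to be tangent to $[1:0]$. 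But degeneracy of $[1:0]$ forces $P_k^1(1,0)=0$, i.e.\ no pure $z^k$ term in the first coordinate, which contradicts your claimed leading $z$-dynamics $z\mapsto z-z^{k}+\cdots$ at the order $k=\nu(F)$. One could try to push the parabolic order higher, but then the transverse analysis changes and you do not address it.

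\textbf{The invariant entire graph $\Gamma_N$ is conjured, not constructed.} Making a global graph $w=z^{N}h(z)$ invariant under a finite composition of shears imposes infinitely many coefficient conditions with only finitely many free parameters; nothing in your outline shows this can be solved, and it would be a separate theorem. The paper gets its boundary curve ``for free'' from Hakim's theorem at the non-degenerate direction, proves it is biholomorphic to $\CC$ by a one-variable Fatou-coordinate argument on $V_\epsilon$, and then shows $\Gamma\subset\partial\Omega$ by pushing forward the local picture (Corollary \ref{partCbound} and Proposition \ref{gammaintheboundary}). That route uses only objects that are known to exist.

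In short: the proposal replaces the paper's two genuine novelties — the use of the other, non-degenerate parabolic curve as a reference to straighten coordinates, and the ODE construction of the Abel--Fatou coordinate forced by the $1/n$ versus $1/\log n$ asymptotics — by a blow-up/Hakim reduction that the degenerate hypothesis rules out, and it postulates an invariant entire curve that it has no means to produce.
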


The proof of the theorem is similar in spirit to the proof of Weickert's Theorem (Theorem~\ref{wetheom}). We first explain the general structure of both proofs. In the remarks that follow we point out the major differences between the case of non-degenerate characteristic directions and that of degenerate directions.

Given an automorphism $F$ of $\CC^2$ tangent to the identity at the origin. The steps we follow to show that the region of attraction is a Fatou-Bieberbach domain are:

\begin{enumerate}

\item First we find a domain $D$ such that $F(D) \subset D$ and $0 \in \partial D$. Also we prove that for any $z \in D, F^n(z) \to 0$. Moreover, $[F^{n}(z)] \to [v]$ where $[v] \in \PP^1$ is a \textit{degenerate} characteristic direction.

\item We work locally in $D$ and find an Abel-Fatou coordinate. That is, $\phi: D \to \CC$ such that $\phi (F(p)) = \phi(p) - 1$.

\item A global basin of attraction $\Omega$ is obtained as $\Omega = \cup_{n=0}^{\infty} F^{-n}(D)$. We extend $\phi$ to all of $\Omega$. We prove that $\phi$ is surjective onto $\CC$ (this normally comes for free with the use of the Abel-Fatou coordinates). 

\item After this we prove that $V_t = \phi^{-1}(t)$ is biholomorphic to $\CC$ for each $t$. We call the biholomorphism $\psi_t: V_t \to \CC$.

\item Using $\phi$ and $\psi_t$, we construct a global map $G = (\phi,\psi): \Omega \to \CC^2$. We prove $G$ is a biholomorphism, i.e. $G$ is injective and surjective.
\end{enumerate}

Note that many of these steps could also be carried out for a germ of a biholomorphism. Therefore we find explicit examples of germs for which there exists an open basin along a degenerate characteristic direction. This phenomenon has already been explored by Abate in \cite{ab2}, where he gives an example of a germ of $\CC^2$ with $(0,1)$ a degenerate characteristic direction and an open basin along $(0,1)$ (see case $1_{(10)}$ Page 8).

The major difference between the case of degenerate and non-degenerate characteristic direction is in Step 2. The condition of being a non-degenerate characteristic direction allows one to make a simple change coordinates in order to prove Step 2. In the general case, this change of coordinates does not always exist. In our case we prove the existence of $\phi$ by solving a differential equation. We remark that given Steps 1 through 4, Step 5 is exactly the same as in Weickert's case. 

We should point out that for most of these steps to be completed, we need a very good estimate of the size of the change of variables. The difficulty of proving surjectivity depends in general on how the shape of our domain changes with the change of coordinate. 

The presentation of the proof is as follows. We start with an automorphism of $\CC^2$ tangent to the identity. Then each section will be used to prove steps 1 through 5 of the outline of the proof.  We also prove the existence of an $F$-invariant curve $\Gamma$ biholomorphic to $\CC$ and contained in $\partial \Omega$, where $\Omega$ is the Fatou-Bieberbach domain. 

\medskip

\noindent
{\it Acknowledgments.} I would like to thank my advisor Prof. Berit Stensones for fruitful discussions and Prof. Mattias Jonsson for invaluable comments on an earlier draft.

\section{Invariant Domain}
We first describe a class of automorphisms where we are able to find a basin.

\begin{theorem} Let $F$ be an automorphism tangent to the identity at the origin of the form:
\begin{align}
F(z,w) = (z + z^2 + O(z^3,z^2w), w - zw^2 + O(z^4, z^3w, z^2w^2, zw^3)). \label{especial}
\end{align}
Then the vector $(0,1)$ is a degenerate characteristic direction for $F$. Moreover, there exists a basin of attraction $D$, for which every point $p \in D$ is attracted to the origin along $(0,1)$.
\end{theorem}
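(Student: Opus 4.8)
The plan is the following. The statement on the characteristic direction is immediate: writing $F=\Id+P_2+P_3+\cdots$, from~\eqref{especial} one reads off $\nu(F)=2$ and $P_2(z,w)=(z^2,0)$ (the monomial $-zw^2$ is cubic), so $P_2(0,1)=(0,0)=0\cdot(0,1)$; thus $(0,1)$ is a characteristic direction with associated eigenvalue $0$, i.e.\ degenerate. (The only other characteristic direction is $(1,0)$, which is non-degenerate.)

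For the basin I would pass to the coordinates $\zeta=-1/z$ and $\eta=1/w$, in which~\eqref{especial} becomes almost additive. Fix a large $M>0$ and write $F^n(z_0,w_0)=(z_n,w_n)$, $\zeta_n=-1/z_n$, $\eta_n=1/w_n$. On the polydisc $\{|z|<1/M,\ |w|<10/M\}$ a direct computation gives $\zeta_{n+1}=\zeta_n+1+O(1/M)$ and $\eta_{n+1}=\eta_n+z_n+\gamma_n$ with $|\gamma_n|\le(C/M)|z_n|$ (absolute constants), the $\eta$-estimate using that, whenever $|z_n|\le|w_n|$, each error monomial $z^4,z^3w,z^2w^2,zw^3$ is $O(|z_n|\,|w_n|^2(|z_n|+|w_n|))$. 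I would then take
\[
D=\bigl\{(z,w):\ \operatorname{Re}\zeta>M,\ |\operatorname{Im}\zeta|<\operatorname{Re}\zeta,\ \operatorname{Re}\eta<-M/10,\ |\eta|<|\zeta|\bigr\},
\]
an open set having $0$ in its boundary (e.g.\ $\zeta_j=j$, $\eta_j=-j/2$ gives points of $D$ tending to $0$), and prove $F(D)\subset D$ and the convergence statement by carrying all four defining inequalities along each orbit.

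The $\zeta$-coordinate is then the classical attracting petal of the non-degenerate direction $(1,0)$: one gets $\operatorname{Re}\zeta_n\ge M+n/2$, the cone is preserved, and $z_n\to0$ with $|z_n|\le 2/(M+n)$. The crucial point is the $\eta$-coordinate. Inside the petal $\operatorname{Re} z_n=-\operatorname{Re}\zeta_n/|\zeta_n|^2<0$, and the cone gives $|z_n|<\sqrt2\,|\operatorname{Re} z_n|$; hence, for $M$ large, $\operatorname{Re}\eta_{n+1}\le\operatorname{Re}\eta_n+\operatorname{Re} z_n+(C/M)|z_n|\le\operatorname{Re}\eta_n+\tfrac12\operatorname{Re} z_n<\operatorname{Re}\eta_n$, so $\operatorname{Re}\eta_n$ decreases, stays $<-M/10$ (which keeps $|w_n|<10/M$), and indeed $\operatorname{Re}\eta_n\le\operatorname{Re}\eta_0-\tfrac12\sum_{k<n}|\operatorname{Re} z_k|\to-\infty$ since the terms are of order $1/k$; thus $w_n\to0$ and $F^n(z_0,w_0)\to0$. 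The inequality $|\eta|<|\zeta|$ is preserved by comparing $|\eta_{n+1}|\le|\eta_n|+2|z_n|$ with $|\zeta_{n+1}|^2\ge|\zeta_n|^2+\tfrac32\operatorname{Re}\zeta_n$. The same bound iterated gives $|\eta_n|\le|\eta_0|+4\log(1+n)$, whence $|z_n/w_n|=|\eta_n|/|\zeta_n|\le(|\eta_0|+4\log(1+n))/(M+n/2)\to0$, i.e.\ $[z_n:w_n]\to[0:1]$, the claimed tangency.

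I expect the main obstacle to be exactly the control of the $w$-coordinate. The naive bound $|w_{n+1}|\le|w_n|(1+O(|z_nw_n|))$ is useless, since $\sum_n|z_n|$ diverges and the product is then unbounded, so a priori an orbit could escape $D$ through the $w$-direction; what saves the day is the sign $\operatorname{Re} z_n<0$ in the petal of $(1,0)$, which forces $\operatorname{Re}(1/w_n)$ to \emph{decrease}. The delicate part is making all the constraints mutually consistent throughout the induction — in particular keeping $|z_n|\le|w_n|$ so that the error terms in the $w$-equation remain controlled — after which the argument is the standard Fatou-flower computation.
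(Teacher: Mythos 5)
Your proof is correct, and it takes a genuinely different route from the paper's. The paper first invokes Hakim's theorem to produce the invariant parabolic curve $\{w = \gamma(z)\}$ tangent to the \emph{non-degenerate} direction $(1,0)$, proves estimates $|\gamma(z)| \leq C_1|z|^3$, $|\gamma'(z)| \leq C_2|z|^2$, and then changes variables to $u = w - \gamma(z)$. The point of this substitution is to kill every pure-$z$ monomial in the second component: invariance of $\mathcal{C}$ forces exact cancellation, yielding the normal form $u_1 = u - u^2z + O(uz^3, z^2u^2, zu^3)$, from which $1/u_1 = 1/u + z + O(z^3/u, z^2, zu)$ follows with no $1/u^2$-type terms. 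You instead stay in the raw coordinate $\eta = 1/w$ and observe that the hypothesis $O(z^4, z^3w, z^2w^2, zw^3)$, in combination with the defining inequality $|z| < |w|$ of the domain, already forces every error monomial to be $O(|z|\,|w|^2(|z| + |w|))$, so that $\eta_{n+1} = \eta_n + z_n + O(|z_n|/M)$ holds without any conjugation. This completely sidesteps Hakim's Theorem \ref{nondegcurve}, Proposition \ref{mihakim}, and Proposition \ref{prop:gamma} for the purpose of this theorem, and the rest of your argument (the sign $\Re z_n < 0$ in the petal forcing $\Re\eta_n$ to decrease strictly; $\sum|\Re z_k| \sim \log n$ giving $w_n \to 0$; $|\eta_n|/|\zeta_n| \sim \log n / n \to 0$ giving tangency to $(0,1)$) is the same Fatou-flower bookkeeping as the paper's parts (i)--(iii) of Proposition \ref{prop:degenerateopenregion}, just with $\eta$ in place of $1/u$ and a half-plane constraint $\Re\eta < -M/10$ in place of the sector $1/u \in U_R$.

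What the paper's route buys that yours does not, and why the paper still needs $\gamma$: the substitution $u = w - \gamma(z)$ is not done only to prove invariance of $D$. It is reused throughout Sections 3--6 — the Abel-Fatou coordinate $\mu$, the functions $g, h, \beta$, the modified domain $D'_{(t,y)}$, and the fibration coordinate $\Upsilon$ are all built from $y = 1/(w - \gamma(z))$, and the later estimates (e.g.\ $y_1 = y + 1/x + \tfrac1x h(1/y) + O(1/x^2)$ with no $y/x^2$-type term) really do exploit the absence of pure-$z$ terms in the $u$-equation. Also, the curve $\gamma$ is needed to state and prove Proposition \ref{gammaintheboundary} ($\Gamma \subset \partial\Omega$) and the boundary assertion in Theorem \ref{degbasinexample}. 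So your argument is a leaner proof of \emph{this} theorem, but it does not replace the machinery the paper develops; the two approaches are complementary rather than interchangeable for the paper's overall program.
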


It is not obvious that there exist automorphisms of $\CC^2$ that have a germ as in \eqref{especial}. Buzzard-Forstneric \cite{bu-fo} and Weickert \cite{we2} have proven that given a finite jet germ of a biholomorphic map, it is possible to construct a global automorphism with the prescribed jet. However, this theorem is not enough for our purposes, since we want to prescribe \emph{which} higher order terms can appear. Specifically we want $F(0,w) = (0,w)$.

Below we construct automorphisms of the form \eqref{especial} using a composition of shears and overshears.

\begin{lemma}\label{lemma:constructionofF}
There exists $F \in Aut(\CC^2)$ such that:
\begin{align*}
F(0)&= 0\\
F'(0)&= \Id
\end{align*}
and
\begin{equation}
F(z,w) = (z + z^2 + O(z^3,z^2w), w - w^2z - \frac{z^4}{3} + \frac{8z^3w}{3} + O(z^5,z^4w,z^2w^2,zw^3))
\end{equation}
is the germ of F at (0,0).
\end{lemma}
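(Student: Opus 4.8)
The plan is to build $F$ explicitly as a finite composition of shears and overshears, which are known to be automorphisms of $\CC^2$, and then to compute the germ at the origin term by term until the required $2$-jet (and the specific third- and fourth-order terms $-z^4/3 + 8z^3w/3$ in the second coordinate) appear. Recall that a shear has the form $(z,w)\mapsto(z, w + f(z))$ or $(z,w)\mapsto(z + g(w), w)$ with $f,g$ entire, and an overshear has the form $(z,w)\mapsto(z, e^{a(z)}w + f(z))$ (and the analogous version in the other variable); both are automorphisms with polynomial inverse behavior on the relevant jets. The first step is therefore to write down a candidate composition $F = S_N \circ \dots \circ S_1$, where each $S_j$ is a shear or overshear chosen so that (a) each $S_j$ fixes the origin and has identity linear part, so that the composition does too, and (b) the nonlinear terms are arranged so that, after composition, the $z$-coordinate begins with $z + z^2$ and the $w$-coordinate begins with $w - w^2 z$. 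The natural way to get the $z+z^2$ term is to use a shear in the $z$ direction driven by a function of $w$ that, after an earlier shear has made $z$ and $w$ interact, produces $z^2$; alternatively one arranges a map of the form $(z,w) \mapsto (z/(1-z) + \dots, \dots)$ via conjugation. Getting the crucial $-w^2 z$ term in the second coordinate is the analogue of the construction in Weickert's and Hakim's examples: one uses an overshear of the form $w \mapsto e^{a(z)} w$ with $a(z)$ chosen so that expanding $e^{a(z)}$ contributes the needed term, combined with shears.

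The second step is the bookkeeping: once the composition is fixed, I would expand $F$ as a power series at $0$, keeping track of all monomials of total degree up to $4$ in the second coordinate and up to degree $2$ (or $3$) in the first, and verify that the coefficients match the statement — in particular that the coefficient of $z^4$ in the second coordinate is $-1/3$ and that of $z^3 w$ is $8/3$, and that there are \emph{no} lower-order terms other than those displayed (e.g. no pure $z^3$ or $z^2 w$ term in the second coordinate, no $w$-only terms, etc.). This is a finite but somewhat delicate computation; the coefficients $-1/3$ and $8/3$ are presumably not free choices but are whatever falls out of the cleanest construction, and the point of the lemma is precisely to record them so that the later sections (which need the precise $3$-jet and $4$-jet to run the normal-form and surjectivity arguments) have something concrete to work with.

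I expect the main obstacle to be \emph{simultaneously} achieving two things with a short composition: (i) the exact normalized $2$-jet $(z + z^2,\, w - w^2 z)$, which forces $(0,1)$ to be a degenerate characteristic direction and is what drives Step 1 of the outline, and (ii) the constraint that no spurious low-order terms appear — especially the requirement, emphasized in the text, that one has control over \emph{which} higher-order terms occur (morally one wants $F(0,w)$ close to $(0,w)$). A general jet-realization theorem (Buzzard–Forstnerič \cite{bu-fo}, Weickert \cite{we2}) gives an automorphism with a prescribed finite jet but does not give control over the unprescribed higher-order terms, so it cannot be invoked directly; the explicit shear/overshear composition is needed precisely to keep the unwanted terms out. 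Concretely, the difficulty is that each shear or overshear one introduces to fix up one coefficient tends to perturb several others, so the composition has to be ordered carefully (lowest-degree corrections first) and one must check that later factors do not reintroduce the terms earlier factors were designed to kill. Once the composition and its ordering are chosen correctly, the verification reduces to the routine power-series expansion described above.
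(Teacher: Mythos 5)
Your strategy is the right one and matches the paper's: build $F$ as an explicit composition of shears and overshears, each fixing the origin with identity linear part and (crucially) fixing the axis $\{z=0\}$ pointwise, then expand and kill unwanted low-order coefficients one at a time. You also correctly diagnose why the Buzzard--Forstneri\v{c}/Weickert jet-realization theorem is insufficient here (no control over the unprescribed part of the jet, in particular no guarantee that $F(0,w)=(0,w)$). So the plan is sound.

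The problem is that the plan is all you give: the proposal never actually produces the composition, and for a constructive existence lemma that \emph{is} the proof. The genuinely nontrivial step --- which you gesture at but do not carry out --- is the choice of the initial four-fold ``commutator-like'' composition $f_4\circ f_3\circ f_2\circ f_1$ with $f_1(z,w)=(z,w+z)$, $f_2(z,w)=(ze^{w},w)$, $f_3(z,w)=(z,w-z)$, $f_4(z,w)=(ze^{-w},w)$. This is what simultaneously (i) yields $z_1=z+z^2+\cdots$, (ii) preserves the axis $\{z=0\}$ (since $0\cdot e^{\pm w}=0$ and the $w$-shears have no constant term), and (iii) produces second-coordinate garbage ($-zw, -z^2, -z^2w, -\tfrac{3}{2}z^3$ after the first cleanup) that can then be removed by shears $s_1,s_2$ of the form $(z,w+cz^j)$ and overshears $o_1,o_2$ of the form $(z,we^{cz^j})$ \emph{without reintroducing} a $w^2$- or $w$-only term. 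Your sketch does not verify that such an ordering exists, nor does it address how one arrives at the specific constants $-1/3$ and $8/3$ in the statement --- in the paper these are read off from the expansion of the explicit composition (after a final conjugation by $(z,w)\mapsto(z,2w)$ to normalize the $zw^2$ coefficient to $-1$), not chosen a priori. Without exhibiting a concrete composition and running the power-series computation, the lemma is not proved; the entire content of the statement is in those choices and that verification.
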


\begin{proof}
We will define $F$ as the compositions of shears and overshears in $\CC^{2}$. Let us start with: 
\begin{eqnarray*}
(z,w) &\xra{f_{1}} &(z,w+z)\\ 
(z,w) &\xra{f_{2}} &(ze^w,w)\\ 
(z,w) &\xra{f_{3}} &(z,w - z) \\
(z,w) & \xra{f_{4}} &(ze^{-w},w)
\end{eqnarray*}

If we compute the power series of $f = f_4\circ f_3 \circ f_2 \circ f_1$ around the origin we obtain:
\begin{equation*}
f(z,w) = (z + z^2 + O(z^3, z^2w), w - zw - z^2 - \frac{z^3}{2} - z^2w - \frac{z^3}{2} + O(z^4, z^3w, z^2w^2, zw^3))
\end{equation*}

We want to get rid of the terms $-zw, -z^{2}, -z^2w$ and $-\frac{z^{3}}{2}$ in the second coordinate. So, we apply  a shear and an overshear to cancel these terms. 

The terms $z^{\alpha}$ can be canceled by using shears of the kind $(z,w) \to (z, w + f(z))$ and the terms $z^{\beta}w$ can be canceled using overshears of the kind $(z,w) \to (z, we^{g(z)})$.

We call $O(4) = O(|z|^4, |z|^3|w|, |z|^2|w|^2, |z||w|^3)$.

\begin{itemize}
\item To cancel $-z^2$;
we use the shear $s_1(z,w) = (z,w + z^2)$. Now we compute the power series expansion of $s_1 \circ f$  around $(0,0)$ and we get:
\begin{eqnarray*}
s_1 \circ f (z,w) = (z+z^2 + O(z^3, z^2w), w + \frac{3z^3}{2} - zw - z^2w - \frac{zw^2}{2} + O(4))
\end{eqnarray*}

\item To cancel $\frac{3z^3}{2}$;
we use the shear $s_2(z,w) = (z, w - \frac{3z^3}{2})$ and now we get:
\begin{eqnarray*}
g(z,w) = s_2 \circ s_1 \circ f (z,w) = (z+z^2 + O(z^3, z^2w), w  - zw - z^2w - \frac{zw^2}{2} + O(4))
\end{eqnarray*}

\item To cancel $-zw$;
we use the overshear $o_1(z,w) = (z, we^{z})$ and now we get:
\begin{eqnarray*}
o_1 \circ g(z,w) = (z+z^2 + O(z^3, z^2w), w- \frac{z^2w}{2} - \frac{zw^2}{2} + O(4))
\end{eqnarray*}

\item To cancel $\frac{-z^2w}{2}$;
we use the overshear $o_2(z,w) = (z, w e^{z^2/2})$ and now we get:
\begin{eqnarray*}
h(z,w) = o_2 \circ o_1 \circ g (z,w) = (z+z^2 + O(z^3, z^2w), w  - \frac{zw^2}{2} + O(4))
\end{eqnarray*}
\end{itemize}
which is the desired power series around $0$. 

We can also conjugate by $\phi (z,w) = (z, 2w)$ and finally we have:
\begin{eqnarray*}
F(z,w) = \phi^{-1} \circ h \circ \phi(z,w) = (z+z^2 + O(z^3, z^2w), w  - zw^2 + O(z^4, z^3w, z^2w^2, zw^3))
\end{eqnarray*}

To summarize we are composing the following shears and overshears:
\begin{eqnarray*}
(z,w) &\xra{\phi} & (z,2w)\\
(z,w) &\xra{f_{1}} &(z,w+z)\\ 
(z,w) &\xra{f_{2}} &(ze^w,w)\\ 
(z,w) &\xra{f_{3}} &(z,w - z) \\
(z,w) & \xra{f_{4}} &(ze^{-w},w)\\
(z,w) &\xra{s_{1}} &(z,w+z^2)\\ 
(z,w) &\xra{s_{2}} &(z,w - \frac{3z^3}{2})\\
(z,w) &\xra{o_{1}} &(z,we^{z})\\
(z,w) &\xra{o_{2}} &(z,we^{z^2/2})\\   
(z,w) &\xra{\phi^{-1}} &(z, w/2)
\end{eqnarray*}
and
\[
F = \phi^{-1} \circ o_2 \circ o_1 \circ s_2 \circ s_1 \circ f_4 \circ f_3 \circ f_2 \circ f_1 \circ \phi
\]
so 
\begin{eqnarray*}
F(z,w) = (F_1(z,w), F_2(z,w)) 
\end{eqnarray*}
where
\begin{align}
\label{Fzcoordinates} F_1(z,w) &= z \exp(ze^{2w+z}) 
\end{align}
and
\begin{align}
\label{Fwcoordinates} F_2(z,w) =& \left[ w+ \frac{z}{2} - \frac{z}{2}e^{2w+z} + \frac{z^2}{2}\exp(2ze^{2w+z}) - \frac{3z^3}{4}\exp(3ze^{2w+z})\right] \times\\ \nonumber
& \exp \left(z\exp(ze^{2w+z}) + \frac{z^2}{2}\exp(2ze^{2w+z})\right) 
\end{align}
which gives the expansion we were looking for.
\end{proof}

Notice that by using more shears and overshears we can get rid of the pure terms $z^{\alpha}$ and $z^{\beta}w$ for $\alpha$ and $\beta$ as large as we want meaning we can get an automorphism of $\CC^2$ with power series around $(0,0)$:
\smallskip
$$
F(z,w) = (z+z^2+O(z^3,z^2w), w - w^2z +O(z^\alpha, z^{\beta}w, z^2w^2, zw^3))
$$

Since $F(0,w) = (0,w)$ it follows that $F$ restricted to $\CC^* \times\CC$ is an automorphism of $\CC^* \times\CC$. 

As we said before, it is easy to see that $F$ has two characteristic directions: $(1,0)$, which is nondegenerate and $(0,1)$, which is degenerate. 

Hakim's Theorem~\ref{nondegcurve} says that for any nondegenerate characteristic direction $[v]$, there exists an invariant piece of curve attracted to the origin tangent to $[v]$. Applying this theorem in our setup, it follows there exists an invariant piece of curve attracted to $(1,0)$ for the map $F$ in Lemma \ref{lemma:constructionofF}. 

The assumption of the existence of a non-degenerate characteristic direction allows us to blow-up the origin to get a simpler expression for $F$. In these new coordinates Hakim proves that the invariant piece of curve is locally a holomorphic graph over the $z$-axis, and she gives precise estimates of the size of the graph function and its derivative. We will also need estimates in our case therefore we present her results and apply them to our case. The estimates of her paper that we use are scattered in different parts of the paper. Therefore for the reader's convenience we summarize them in the following proposition  (see Main Theorem 1.3, Lemma 4.5, Lemma 4.6 and Proposition 4.8 in \cite{hak1}).

\begin{proposition}\label{mihakim} Assume that a transformation tangent to the identity is written as follows:
\begin{align}
x_1 &= f(x,u) = x - x^2 + O(ux^2, x^3),\label{hakimmap}\\
u_1 &= \Psi(x,u) = u - axu + O(u^2x,ux^2) + x^{k+1} \psi_k(x).\nonumber 
\end{align}
where $a \notin \NN$. Then there exists an invariant piece of curve $(x,u(x))$, where $u$ is defined in some $D_{r} = \{x \in \CC; |x-r| < r\}$ and where $\lim_{x \in D_r, x \to 0}u(x)=0$. Moreover,
\begin{align}\label{hakimusize}
|u(x)| \leq C_1|x|^k  \quad \textrm{and}\quad  |u'(x)| \leq C_2|x|^{k-1}
\end{align}
for $x\in D_r$, where $C_1$ and $C_2$ are positive constants.
\end{proposition}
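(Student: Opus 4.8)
\medskip

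\noindent\textbf{Proof proposal.} The statement is essentially a repackaging of Hakim's construction of parabolic curves, so the plan is to retrace that construction and keep track of the two size estimates. First I would pass to the Fatou coordinate $w = 1/x$ in the first variable: this carries $D_r$ biholomorphically onto the right half-plane $H_\rho = \{\mathrm{Re}\,w > \rho\}$ with $\rho = 1/(2r)$, turns ``$x\to 0$ inside $D_r$'' into ``$\mathrm{Re}\,w\to+\infty$'', and --- since $x_1 = x - x^2 + O(x^3,ux^2)$ and $u$ will be small along the curve --- transforms the first-coordinate dynamics into $w_1 = w + 1 + O(1/w)$. An invariant curve that is a graph $u = u(x)$ over $D_r$ then corresponds to a holomorphic function $\tilde u$ on $H_\rho$ with $\tilde u(w)\to 0$ as $\mathrm{Re}\,w\to\infty$, and invariance of the graph becomes the functional equation $\tilde u(w_1) = \mu(w)\,\tilde u(w) + g(w) + R(w,\tilde u(w))$, where $\mu(w) = 1 - a/w + O(1/w^2)$, the forcing term $g(w) = O(|w|^{-(k+1)})$ comes from $x^{k+1}\psi_k(x)$, and $R = O(|\tilde u|^2/|w|) + O(|\tilde u|/|w|^2)$ collects the remaining higher-order terms of $\Psi$.

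Next I would solve this functional equation. Heuristically, iterating forward along the orbit $w_0 = w$, $w_{n+1} = w_n + 1 + O(1/w_n)$ and imposing $\tilde u(w_n)\to 0$ singles out
\[
\tilde u(w) \;=\; -\sum_{m\ge 0}\ \frac{g(w_m)}{\prod_{l=0}^{m}\mu(w_l)}\ +\ (\text{higher-order corrections}),
\]
and since $\prod_{l=0}^{m}\mu(w_l)\sim (w_m/w)^{-a}$ while $g(w_m)\sim c\,w_m^{-(k+1)}$, the $m$-th term has size comparable to $|w|^{\mathrm{Re}\,a}\,|w_m|^{-(k+1-\mathrm{Re}\,a)}$, so that the sum is of size $O(|w|^{-k})$ exactly because $a\ne k$ --- this is where the hypothesis $a\notin\NN$ enters. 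To make this rigorous I would run a contraction argument: on the Banach space of holomorphic functions $v$ on $H_\rho$ with $\|v\| = \sup_{H_\rho}|w|^{k}|v(w)|<\infty$, the operator sending a candidate graph $v$ to the solution of the functional equation one step upstream is, for $\rho$ large enough (depending on $a$, $k$, and the implied constants), a self-map of a small ball and a contraction; its unique fixed point is $\tilde u$, and $\|\tilde u\|<\infty$ translates, back in the $x$-coordinate, into $|u(x)|\le C_1|x|^k$ on $D_r$.

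For the derivative estimate I would peel off one more order: write $\tilde u(w) = b\,w^{-k} + v(w)$ with $b(a-k)=c$ (solvable since $a\ne k$), and note that $v$ satisfies a functional equation of the same type whose forcing is now $O(|w|^{-(k+2)})$, so the same fixed-point estimate gives $|v(w)|\le C|w|^{-(k+1)}$ on $H_\rho$. Since $B(w,1)\subset H_\rho$ for $w$ in a slightly smaller half-plane, Cauchy's estimate yields $|v'(w)|\le C'|w|^{-(k+1)}$, hence $|\tilde u'(w)|\le C''|w|^{-(k+1)}$, and transporting back via $u'(x) = -\tilde u'(1/x)/x^2$ gives $|u'(x)|\le C_2|x|^{k-1}$ after shrinking $r$. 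Applying Cauchy's formula directly to $u$ on $D_r$ would lose a power of $|x|$ because $D_r$ is tangent to the imaginary axis at the origin; peeling off the leading monomial first is what avoids this loss.

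The hard part is the second step: one must control the accumulated multiplier $\prod_{l}\mu(w_l)$ along orbit arcs of length $\sim|w|$, verify that the non-resonance $a\ne k$ keeps the solution of the right size \emph{uniformly} on the unbounded region $H_\rho$, and simultaneously absorb the genuinely higher-order terms collected in $R$; the reduction in the first step and the Cauchy estimate in the last step are routine once this uniform estimate is in place. Since the map \eqref{hakimmap} is already in Hakim's normal form by hypothesis, an equally legitimate route is simply to quote \cite{hak1}: the present proposition is precisely the content of Main Theorem 1.3 together with Lemmas 4.5--4.6 and Proposition 4.8 there.
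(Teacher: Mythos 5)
The paper does not actually prove Proposition~\ref{mihakim}: it quotes the result from \cite{hak1}, pointing to Main Theorem~1.3, Lemmas~4.5--4.6, and Proposition~4.8 there, and remarks only that the hypothesis $a\notin\NN$ removes the logarithmic terms present in Hakim's equation~(4.1). Your closing sentence does precisely the same, so at the level of what the paper records, your proposal and the paper agree; the preceding three paragraphs are a supplementary reconstruction of Hakim's argument rather than an alternative to what the paper does.

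On the reconstruction itself, the overall architecture (Fatou coordinate $w=1/x$ turning $D_r$ into a half-plane, functional equation $\tilde u(w_1)=\mu(w)\tilde u(w)+g(w)+R$ for the graph, forward summation to build the solution, and peeling off $b w^{-k}$ before applying Cauchy so as not to lose a power on the tangential domain $D_r$) is the right skeleton, and the normalization $b(a-k)=c$ for the derivative step is correct. Two points are looser than they should be if you meant this to stand as a proof. First, convergence of $\sum_m g(w_m)/\prod_{l\le m}\mu(w_l)$ requires $\mathrm{Re}\,a<k$, which is strictly stronger than $a\neq k$; the role of $a\notin\NN$ in the statement is, as the paper notes, to keep Hakim's normal form free of $\log$ terms, not to make this sum converge (in the paper's one application $a=0$, $k=4$, so both hold). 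Second, the ``one step upstream'' map $v\mapsto \mu^{-1}\bigl(v\circ w_1 - g - R(\cdot,v)\bigr)$ is not a uniform contraction on the ball $\{\sup|w|^k|v|\le M\}$: its Lipschitz factor is roughly $1+(\mathrm{Re}\,a-k)\,\mathrm{Re}(1/w)$, which is $<1$ pointwise but tends to $1$ as $|w|\to\infty$ along $\partial H_\rho$, and the accumulated factor over $N$ steps, $(|w_N|/|w_0|)^{\mathrm{Re}\,a-k}$, is likewise close to $1$ for large $|w_0|$. So neither $T$ nor $T^N$ contracts uniformly, and the Banach fixed-point theorem does not apply as stated; one needs either the explicit series with the decay of $g$ doing the work, or a more carefully weighted norm. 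These are exactly the estimates Hakim's Lemmas~4.5--4.6 carry out, which is why both you and the paper end up citing them.
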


We refer to Hakim's proof of Main Theorem 1.3 for this result, but let us point out that we should compare \eqref{hakimmap} with equation (4.1) on section 4 of \cite{hak1}. Note that, as stated, Proposition~\ref{mihakim} does not involve $\log$ terms as opposed to equation (4.1) in \cite{hak1}. This is because we are assuming $a\notin \NN$. The estimates we are quoting for the size of $u$ should be compared to Lemmas 4.5 (p.420) and 4.6 (p.421) of \cite{hak1}. In our case we do not need the $\log$ terms since $a \notin \NN$, and we will use $k=4$ in both lemmas.

Let us define, for any $\epsilon > 0$:
\begin{equation}
V_{\epsilon} = \{ z \in \CC \mid  0<|z| < \epsilon , |\textrm{Arg}(z) - \pi| < \pi/8\} \subset \CC
\end{equation}

It is clear that for $\epsilon$ small enough, $z \in V_{\epsilon}$ implies $-z \in D_r$. Note that $V_{\epsilon}$ is a local basin of attraction for the map $z \mapsto z + z^2$.

Now we can give our result:
\begin{proposition}\label{prop:gamma} Let $F$ be as in Lemma \ref{lemma:constructionofF}. Then there exists a parabolic curve attracted to $(0,0)$ along $(1,0)$ i.e. there exists $\gamma$ defined in the closure of $V_{\epsilon}$ with values in $\CC$ such that $\gamma$ is analytic in $V_\epsilon$, continuous up to the closure of $V_{\epsilon}$, $\gamma(0) = 0$ and such that if $p \in \mathcal{C} = \{(z,\gamma(z)), z \in V_{\epsilon}\}$ then $F(p) \in \mathcal{C}$ and $F^{n}(p) \rightarrow 0$ when $n \rightarrow \infty$.
Also we have the following estimates on the size of $\gamma$ and its derivative, i.e. there exist constants $C_1$ and $C_2$ such that:
\begin{eqnarray}
\label{gammasize}|\gamma(z)| \leq C_1|z|^3
\end{eqnarray}
and
\begin{eqnarray}
\label{gammaprimesize}|\gamma'(z)| \leq C_2|z|^2
\end{eqnarray}
for $z \in V_{\epsilon}$.
\end{proposition}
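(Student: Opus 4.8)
The plan is to produce $\gamma$ by blowing up the origin along the non-degenerate direction $(1,0)$, applying Hakim's estimates in the packaged form of Proposition~\ref{mihakim}, and transporting the resulting curve back to the $(z,w)$ coordinates. First I would pass, on $\{z\neq 0\}$, to the coordinates $u=w/z$ and $x=-z$ (the sign chosen to match the convention of \eqref{hakimmap}), so that $(z,w)=(-x,-xu)$; substituting the expansion of $F$ from Lemma~\ref{lemma:constructionofF} into $x_1=-F_1(-x,-xu)$ and $u_1=F_2(-x,-xu)/F_1(-x,-xu)$ and expanding, one finds
\begin{align*}
x_1 &= x-x^2+O(ux^2,x^3),\\
u_1 &= u+xu+O(u^2x,ux^2)+x^{k+1}\psi_k(x),
\end{align*}
which is precisely the form \eqref{hakimmap} with $a=-1$ (this value being forced by the $2$-jet of $F$). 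The construction of Lemma~\ref{lemma:constructionofF}, extended by the remark following it so as to cancel pure terms $z^\alpha$, $z^\beta w$ of sufficiently high order, can be arranged so that $k=4$; any $k\ge 2$ suffices below.

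Since $a=-1\notin\NN$, Proposition~\ref{mihakim} then supplies an invariant piece of curve $(x,u(x))$, with $u$ holomorphic on $D_r$, $u(x)\to 0$ as $D_r\ni x\to 0$, and $|u(x)|\le C_1|x|^k$, $|u'(x)|\le C_2|x|^{k-1}$. Undoing the coordinate change, I would set $\gamma(z):=z\,u(-z)$. For $\epsilon$ small enough $\overline{V_\epsilon}\setminus\{0\}\subset -D_r$ (recall that $z\in V_\epsilon$ forces $-z\in D_r$), so $\gamma$ is holomorphic on $V_\epsilon$ and continuous on $\overline{V_\epsilon}$, with $\gamma(z)=z\,u(-z)\to 0$, so $\gamma(0)=0$. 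The size bounds then fall out: $|\gamma(z)|=|z|\,|u(-z)|\le C_1|z|^{k+1}\le C_1|z|^3$, which is \eqref{gammasize}; and from $\gamma'(z)=u(-z)-z\,u'(-z)$ one gets $|\gamma'(z)|\le C_1|z|^k+C_2|z|^k\le (C_1+C_2)|z|^2$, which is \eqref{gammaprimesize}.

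Next I would verify that $\mathcal C=\{(z,\gamma(z)):z\in V_\epsilon\}$ is $F$-invariant and attracted to $0$ along $(1,0)$. Invariance of the graph $\{(x,u(x))\}$ under the blown-up map translates, for $z\in -D_r$, into $F_1(z,\gamma(z))\in -D_r$ and $F_2(z,\gamma(z))=\gamma\bigl(F_1(z,\gamma(z))\bigr)$. On $\mathcal C$ one has $\gamma(z)=O(z^3)$, hence $F_1(z,\gamma(z))=z+z^2+O(z^3)$, a small perturbation of $z\mapsto z+z^2$; since $V_\epsilon$ is forward invariant under $z\mapsto z+z^2$ (shrinking $\epsilon$ if necessary), it stays forward invariant under this perturbation, so $z\in V_\epsilon$ implies $F_1(z,\gamma(z))\in V_\epsilon$ and therefore $F(\mathcal C)\subset\mathcal C$. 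Iterating, $F^n(z,\gamma(z))=(z_n,\gamma(z_n))$ with $z_{n+1}=z_n+z_n^2+O(z_n^3)$, and $z_n\to 0$ since $V_\epsilon$ is a local basin for $z\mapsto z+z^2$ (hence for the perturbation); thus $F^n\to 0$ on $\mathcal C$, while $[F^n(z,\gamma(z))]=[1:\gamma(z_n)/z_n]=[1:u(-z_n)]\to[1:0]$, so the attraction is along $(1,0)$. Finally $\zeta\mapsto(\zeta,\gamma(\zeta))$ is injective on the simply connected domain $V_\epsilon$ with $0\in\partial V_\epsilon$, so $\mathcal C$ is a parabolic curve in the sense of Definition~\ref{defin:chardir}, which would conclude the argument.

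The only step that is not routine bookkeeping is the first one: pushing the blow-up through carefully enough to be sure the transformed map is genuinely of the form \eqref{hakimmap} with $a\notin\NN$ — here $a=-1$, a negative integer, which is exactly what makes the $\log$-free estimates of Proposition~\ref{mihakim} available. Everything afterwards is transporting Hakim's disk $D_r$ and her bounds through the elementary map $(x,u)\mapsto(-x,-xu)$ together with the forward invariance of the sector $V_\epsilon$ under $z\mapsto z+z^2$.
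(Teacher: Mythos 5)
Your proof is correct, but you take a genuinely different coordinate change from the paper. You blow up along the characteristic direction in the canonical way, setting $u = w/z$ and $x = -z$, which gives Hakim's normal form \eqref{hakimmap} with $a = -1$ (equal to the director $A((1,0))$ that the paper later computes). The paper instead uses the much simpler substitution $(x,u) = (-z, w)$ — no blow-up at all — and observes that, purely by inspecting the low-order terms of the specific $F$ constructed in Lemma~\ref{lemma:constructionofF}, the resulting map already happens to lie in the form \eqref{hakimmap}, with $a=0$; the paper then reads off $\gamma(z) = u(-z)$ directly, whereas you must undo the blow-up via $\gamma(z) = z\,u(-z)$, which accounts for why you can work with $k\geq 2$ while the paper needs $k\geq 3$ in \eqref{hakimmap} to land on the same bound $|\gamma(z)|\le C_1|z|^3$. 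Both routes are legitimate applications of Proposition~\ref{mihakim}: each produces a map of the required form with $a\notin\NN$ and verifies the estimates by transporting Hakim's bounds. Your approach is the more systematic one — it is the one that would work for a general germ with a non-degenerate characteristic direction and naturally identifies $a$ with the director, removing any ambiguity about whether $a=0$ should count as belonging to $\NN$ — while the paper's shortcut exploits the very particular shape of $F$ (notably the absence of a $w^2$ term in the first coordinate and of a $zw$ term in the second) to avoid tracking the extra factor of $z$. The remaining steps of your argument (forward invariance of $V_\epsilon$, the chain rule estimate for $\gamma'$, attraction along $(1,0)$ from $u(-z_n)\to 0$) are all sound and match the paper's intent.
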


\begin{proof}

This result follows almost directly from Proposition~\ref{mihakim}. The existence of an attracting piece of curve is immediate, since $(1,0)$ is a non-degenerate characteristic direction. 
For the estimates on the size of the graph, we want to change coordinates so that 
\begin{eqnarray*}
F(z,w) = (z + z^2 + O(z^3, z^2w), w - w^2z +O(z^4, z^3w, z^2w^2, zw^3))
\end{eqnarray*}
is locally of the form \eqref{hakimmap}.

Let our invariant curve for our original map be $\mathcal{C} = \{(z,\gamma(z)), z \in V_{\epsilon}\}$. We know this curve exists by Hakim's theorem, since $(1,0)$ is a non-degenerate characteristic direction.

We use the change of coordinates:
\begin{eqnarray*}
(x,u) := \varphi(z, w) = (-z, w)\\
(z,w) = \varphi^{-1}(x,y) = (-x, u)
\end{eqnarray*}
In these coordinates we have:
\begin{eqnarray*}
\tilde{F} (x,y) &=& \varphi \circ F \circ \varphi^{-1} (x,y) = \varphi \circ F (-x, y)\\
&=& \varphi (-x + x^2 + O(x^3, x^2u), u + xu^2 + cx^4  + O(x^5, x^3u, x^2u^2, xu^3))\\
&=& (x-x^2+O(x^3,x^2u), u + O(x^3u,xu^2) + x^4\psi(x))
\end{eqnarray*}

Comparing with \eqref{hakimmap}, we can see our map is in the desired form, for $a=0$ and $k=4$. Under the change of coordinate $\varphi$ the invariant curve $(z,\gamma(z))$ will be $(x, u(x))$ where:
\begin{eqnarray*}
(x, u(x)) &=& \varphi(z, \gamma(z))
\end{eqnarray*}
and $(x, u(x))$ is the attractive invariant curve for $\tilde{F}$. We use the estimates we have in \eqref{hakimusize} and the relationship:
\begin{eqnarray*}
x &=& -z\\
u(x) &=& \gamma(z)
\end{eqnarray*}

And we obtain:
\begin{eqnarray*}
|\gamma(z)| \leq C_1|z|^3\\
|\gamma'(z)| \leq C_2|z|^2
\end{eqnarray*}
for $z\in V_{\epsilon}$.
\end{proof}

We can compute the director associated to $v = (1,0)$. We use the following  equivalent definition as in Definition \ref{director} of the director (see \cite{ab1} for a proof of this fact):
\begin{align*}
A(v) :=\frac{r'(u_0)}{P_k(1,u_0)} 
\end{align*}
in our case we have $P_2(z,w) = z^2$, $Q_2(z,w) = 0$. We defined $r(u) = Q_2(1,u) - uP_2(1,u)$, then $r(u) = 0 - u = -u$, and $r'(u_0) = -1$. We have then $A((1,0)) = -1$. Therefore we can expect there is not an open invariant region around $\mathcal{C}$. 

We can extend the curve $\mathcal{C}$ and prove:

\begin{proposition}
If we define 
\begin{align}\label{invariantbigcurve}
\displaystyle\Gamma = \bigcup_{n\geq 0} F^{-n}(\mathcal{C})
\end{align}
 then we have that \begin{itemize}
\item [(i)]
$F(\Gamma)=\Gamma$
\item [(ii)]
$\Gamma$ is biholomorphic to $\CC$. 
\end{itemize}
\end{proposition}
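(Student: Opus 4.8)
\emph{Plan of proof.} Part~(i) is formal. As $\mathcal{C}$ is a parabolic curve, $F(\mathcal{C})\subseteq\mathcal{C}$, and since $F|_{\mathcal{C}}\colon\mathcal{C}\to\mathcal{C}$ is injective holomorphic, hence open, $\mathcal{C}$ is open in $F^{-1}(\mathcal{C})$; iterating, $\mathcal{C}\subseteq F^{-1}(\mathcal{C})\subseteq F^{-2}(\mathcal{C})\subseteq\cdots$ is an increasing chain of embedded curves in $\CC^{2}\setminus\{0\}$, each open in the next, so $\Gamma$ carries the natural structure of a Riemann surface, injectively immersed in $\CC^{2}$. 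Applying the bijection $F$ termwise,
\[
F(\Gamma)=\bigcup_{n\geq0}F\bigl(F^{-n}(\mathcal{C})\bigr)=F(\mathcal{C})\cup\bigcup_{n\geq1}F^{-(n-1)}(\mathcal{C})=F(\mathcal{C})\cup\Gamma=\Gamma ,
\]
using $F(\mathcal{C})\subseteq\mathcal{C}\subseteq\Gamma$ at the last step; I would settle (i) this way at the outset.

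For part~(ii), note first that each $F^{-n}(\mathcal{C})$ is biholomorphic to $\mathcal{C}$, hence to $V_{\epsilon}$, a bounded simply connected domain, hence to the unit disk $\mathbb{D}$; thus $\Gamma$ is a simply connected non-compact Riemann surface and, by uniformization, is biholomorphic to $\mathbb{D}$ or to $\CC$ ($\PP^{1}$ being excluded by non-compactness). The whole point of (ii) is to rule out $\mathbb{D}$, and I plan to do so by producing an Abel--Fatou coordinate on $\Gamma$ that is onto $\CC$. Parametrizing $\mathcal{C}$ by $z\mapsto(z,\gamma(z))$, $z\in V_{\epsilon}$, the action of $F$ on $\mathcal{C}$ is governed by its first coordinate; since $|\gamma(z)|\leq C_{1}|z|^{3}$ by Proposition~\ref{prop:gamma} and $F_{1}(z,w)=z+z^{2}+O(z^{3},z^{2}w)$, this is the one-variable parabolic germ
\[
g(z):=F_{1}\bigl(z,\gamma(z)\bigr)=z+z^{2}+O(z^{3}),
\]
of multiplicity two, whose attracting direction is the negative real axis --- the axis of $V_{\epsilon}$. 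After shrinking $\epsilon$ if necessary (which does not change $\Gamma$, since every $g$-orbit in $V_{\epsilon}$ eventually enters any smaller subsector, so the smaller curve generates the same union under backward iteration) I may assume $V_{\epsilon}$ lies in a genuine attracting petal of $g$, where the classical Leau--Fatou coordinate supplies an injective holomorphic $\phi\colon\mathcal{C}\to\CC$ with $\phi\circ F=\phi-1$; from $\phi(z)=1/z+O\bigl(\log(1/|z|)\bigr)$ one reads off that $\phi(\mathcal{C})$ contains a truncated sector about the negative real axis.

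I would then extend $\phi$ to $\Gamma$ by setting $\phi(p):=\phi\bigl(F^{n}(p)\bigr)+n$ for any (equivalently, all large) $n$ with $F^{n}(p)\in\mathcal{C}$; the functional equation makes this well defined and holomorphic, so $\phi\circ F=\phi-1$ on all of $\Gamma$. That $\phi\colon\Gamma\to\CC$ is a biholomorphism then reduces to two checks. \emph{Injectivity}: any two points of $\Gamma$ lie at a common level $F^{-N}(\mathcal{C})$, on which $\phi=\phi|_{\mathcal{C}}\circ F^{N}+N$ is a composition of injective maps. \emph{Surjectivity}: from $F\bigl(F^{-n}(\mathcal{C})\bigr)=\mathcal{C}$ one gets $\phi\bigl(F^{-n}(\mathcal{C})\bigr)=\phi(\mathcal{C})+n$, whence $\phi(\Gamma)=\bigcup_{n\geq0}\bigl(\phi(\mathcal{C})+n\bigr)$, and the integer translates of a sector opening toward $-\infty$ exhaust $\CC$ (for $\zeta_{0}\in\CC$, the point $\zeta_{0}-n$ lies in the sector once $n$ is large). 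A holomorphic bijection of Riemann surfaces is biholomorphic, so $\Gamma\cong\CC$.

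The step I expect to be the main obstacle is the surjectivity in the last paragraph --- concretely, verifying that $\phi(\mathcal{C})$ is ``thick at infinity'', i.e. that it contains an honest sector about the negative real axis rather than, say, a horizontal half-strip, so that its integer translates fill the whole plane and not merely a strip; a thin image would leave us only with $\Gamma\cong\mathbb{D}$. This is precisely where one must use the explicit leading behaviour $\phi(z)\sim1/z$ together with the sectorial shape of $V_{\epsilon}$. A secondary but necessary bookkeeping point is the claim that replacing $V_{\epsilon}$ by a smaller subsector does not change $\Gamma$, which holds because every orbit in $V_{\epsilon}$ eventually enters the smaller subsector along the attracting direction.
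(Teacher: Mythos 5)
Your proposal is correct and follows essentially the same route as the paper: both arguments push the problem down to the one-variable parabolic germ $g(z)=F_1(z,\gamma(z))=z+z^2+O(z^3)$ acting on $V_\epsilon$, invoke the classical Leau--Fatou coordinate there, extend it to all of $\Gamma$ by the functional equation, and conclude by checking well-definedness, injectivity, and surjectivity via the observation that the integer translates of the image of $\mathcal{C}$ exhaust $\CC$. Your write-up is somewhat more careful than the paper's in two spots: you give the formal set-theoretic argument for (i) rather than asserting it is clear, and you describe the Fatou image of $V_\epsilon$ as a truncated sector (rather than the paper's slightly loose identification with a half-plane $\{\Re\zeta>R\}$) and verify explicitly that translates of a sector fill the plane; you merely use the opposite sign convention ($\phi\circ F=\phi-1$ instead of the paper's $\zeta\circ F=\zeta+1$), which is immaterial.
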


\begin{proof}
The first part is clear. For the second part we need to understand the action of $F$ in $\mathcal{C}$. We have $\mathcal{C} = \{(z,\gamma(z)) ; z \in V_{\epsilon} \}$, with $F(\mathcal{C}) \subset \mathcal{C}$.
Clearly this action is conjugated to the following action:
$$
z_1 = z + z^2 + O(z^3,z^2w) = z + z^2 + O(z^3) 
$$
on $V_{\epsilon}$. It is a consequence of Fatou's work \cite{fa2} that the transformation is conjugated to $\zeta_1 = \zeta+1$ in a domain of type $U = \{\Re e \zeta > R\}$, for $R$ big enough. 
Define the following holomorphic map from $\Gamma$ to $\CC$: 
\begin{align*}
\Phi:& \Gamma \to \CC\\
\Phi(p) &= \zeta(\pi_1 \circ F^N(p)) - N
\end{align*}
where $\pi_1$ is the projection in the first coordinate and $N$ is large enough so $F^N(p) \in \mathcal{C}$. We check now that $\Phi$ is well-defined, injective and surjective. 
\begin{itemize}
\item $\Phi$ is well defined. We want to prove that $\Phi$ is independent of $N$. If $N$ and $M$ are both integers such that $F^N(p)$ and $F^M(p)$ are both in $\mathcal{C}$, then without loss of generalization we can assume $N < M$. Then $F^{M-N}(F^N(p)) = F^M(p)$ implies $\zeta(\pi_1(F^M(p))) = \zeta(\pi_1(F^N(p))) + M-N$. Therefore $\zeta(\pi_1(F^M(p))) - M = \zeta(\pi_1(F^N(p))) - N$, and we see $\Phi(p)$ is well-defined.
\item $\Phi$ is injective. Let $p$ and $q$ in $\Gamma$ such that $\Phi(p) = \Phi(q)$. We know there exists $N$ large enough such that $F^N(p)$ and $F^N(q)$ are both in $\mathcal{C}$. Therefore we will have $\zeta(\pi_1 \circ F^N(p)) - N = \zeta(\pi_1 \circ F^N(q)) - N$. Since $\zeta$ is injective in $V_{\epsilon}$ we have $F^N(p) = F^N(q)$ and therefore $p =q$.
\item $\Phi$ is surjective. By the definition of $\Phi$, we clearly have:
\begin{align*}
\Phi(\Gamma) = \bigcup_{n=0}^{\infty} U - n = \bigcup_{n=0}^{\infty}  \{\Re e \zeta > R-n\} =  \CC
\end{align*}
and we get $\Phi$ is surjective.
\end{itemize}
Therefore $\Gamma$ is biholomorphic to $\CC$.
\end{proof} 
Now we describe the open region $U$ attracted to the origin along the \textit{degenerate} characteristic direction $(0,1)$.

We will show later that the attracting curve $\mathcal{C}$, as in \eqref{invariantbigcurve} and this open region $U$ are disjoint. In fact, we will prove that the attracting curve is contained in the boundary of the basin of the open region.

The main proposition is the following:
\begin{proposition}\label{prop:degenerateopenregion}
Define $\displaystyle{D_{(z,w)} := \{(z,w) \in \CC^2 : z \in V_\epsilon, w-\gamma(z) \in V_\epsilon, |z|<|w-\gamma(z)|\}}$. 
Then $F(D_{(z,w)}) \subset D_{(z,w)}$.

Moreover, denote $F^n(z,w) = (z_n,w_n)$ for $(z,w) \in D_{(z,w)}$. Then:
\begin{eqnarray}
|z_n| \sim \frac{1}{n} 
\end{eqnarray}
and
\begin{eqnarray}
|w_n| \sim \frac{1}{\log n}
\end{eqnarray}
\end{proposition}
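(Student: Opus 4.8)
The plan is to show $F$ maps $D_{(z,w)}$ into itself by treating the three defining conditions separately, then to extract the precise asymptotics from the one-dimensional dynamics governing each coordinate. First I would introduce the shifted coordinate $v = w - \gamma(z)$; by invariance of the curve $\mathcal{C}$ under $F$ (Proposition~\ref{prop:gamma}) and the estimates \eqref{gammasize}--\eqref{gammaprimesize}, the map in $(z,v)$ coordinates has the form $z_1 = z + z^2 + O(z^3)$ (the $z$-dynamics are unaffected, up to higher order terms, by the small perturbation $\gamma$) and $v_1 = v - z v^2 + (\text{higher order})$, because the flat curve $\gamma$ absorbs exactly the $w$-independent part $-z^4/3 + \cdots$ of $F_2$. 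The conditions $z \in V_\epsilon$ and $v \in V_\epsilon$ are then preserved because $V_\epsilon$ is a local basin for $z \mapsto z + z^2$ (stated in the text) and also for the model $v \mapsto v - z v^2$ when $z$ stays in a narrow sector near $\pi$ and $|z|$ is small — the rotation of the "multiplier" $-z$ is by an angle close to $0$, so the relevant half-plane argument for the Fatou coordinate applies. For the condition $|z| < |v|$, I would use that $|z_1|/|v_1| = (|z|/|v|)\cdot|1+z|/|1 - zv + O(\cdots)| \leq (|z|/|v|)\cdot(1 - \Re z)/(1 - \Re(zv)) \cdot (1 + o(1))$, and since $|z| < |v|$ forces $|\Re z| \leq |\Re(zv)|$ up to controllable error (both $z$ and $v$ lie in the same sector, so $zv$ and $z$ have comparable arguments and $|zv| \leq |z|\cdot|v|$, while... ) — the point is that the $z$-coordinate contracts faster in modulus than $v$ does, so the ratio stays below $1$; this requires $\epsilon$ small.

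Next, for the asymptotics, I would note that $z_n$ follows the standard parabolic iteration $z_{n+1} = z_n + z_n^2 + O(z_n^3)$ with $z_0 \in V_\epsilon$, and Fatou's classical analysis (already invoked in the excerpt for the curve $\Gamma$) gives $z_n \sim -1/n$, hence $|z_n| \sim 1/n$. For $v_n = w_n - \gamma(z_n)$, the recursion is $v_{n+1} = v_n - z_n v_n^2 + (\text{higher order})$. Substituting $z_n \approx -1/n$ gives $v_{n+1} \approx v_n + \frac{1}{n} v_n^2$. Passing to $\xi_n = 1/v_n$, this becomes $\xi_{n+1} \approx \xi_n - \frac{1}{n}$ to leading order, so $\xi_n \approx -\sum_{k=1}^n \frac{1}{k} \sim -\log n$, whence $|v_n| = 1/|\xi_n| \sim 1/\log n$, and since $|\gamma(z_n)| \leq C_1|z_n|^3 \sim C_1/n^3$ is negligible compared to $1/\log n$, we conclude $|w_n| \sim 1/\log n$.

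I expect the main obstacle to be making the invariance of the condition $|z| < |v|$ genuinely rigorous, because it is a statement about the ratio of two quantities each of which is being iterated by a parabolic (non-linear, non-contracting in the naive sense) map, and the error terms $O(z^3)$, $O(z^3v, zv^2, \dots)$ must be shown to be dominated uniformly on the region. The delicate point is that near the origin both $|z_n|$ and $|v_n|$ tend to $0$ but at genuinely different rates, so one must track the ratio $|z_n|/|v_n|$ carefully — it should in fact tend to $0$ (like $\log n / n$), giving room to absorb errors, but establishing a clean one-step inequality $|z_1|/|v_1| < |z|/|v|$ valid throughout $D_{(z,w)}$ (not merely asymptotically) needs care with the sector constraints: one wants that for $z, v$ in the sector $|\Arg(\cdot) - \pi| < \pi/8$ the quantities $\Re z$ and $\Re(zv)$ have the right signs and comparable sizes. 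A secondary technical nuisance is justifying that the passage from the recursion to the integral-like estimate $\xi_n \sim -\log n$ is uniform, i.e. controlling the accumulated error from the $O(\cdot)$ terms in the $v$-recursion; this is a standard but slightly tedious summation-by-parts argument using $\sum 1/n^2 < \infty$ against $\sum 1/n = \log n + O(1)$.
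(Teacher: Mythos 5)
Your plan is structurally the same as the paper's: introduce $u = w-\gamma(z)$, show that the invariance of $\mathcal{C}$ plus the flatness estimates \eqref{gammasize}--\eqref{gammaprimesize} reduce the $u$-dynamics to $u_1 = u - u^2 z + \text{(smaller)}$, pass to reciprocals, and read off $|z_n|\sim 1/n$, $|u_n|\sim 1/\log n$. Two places diverge from what the paper actually does, and both are exactly where your self-assessment flags trouble. First, the sentence ``the flat curve $\gamma$ absorbs exactly the $w$-independent part'' compresses the real technical content: when you substitute $w = u + \gamma(z)$ and subtract the exact invariance identity $\gamma(z') = \gamma(z) - \gamma(z)^2 z - l(z) + z\gamma(z)\theta(z,\gamma(z))$, you are left with cross-terms $-2uz\gamma(z)$, $z\gamma(z)\bigl(\theta(z,u+\gamma(z))-\theta(z,\gamma(z))\bigr)$, and crucially a shift $\gamma(z')-\gamma(z_1)$ caused by the fact that the first coordinate of $F$ at $(z,\gamma(z))$ and at $(z,w)$ differ by $O(z^2 u)$. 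The paper's Claim proves $\gamma(z')-\gamma(z_1)=O(z^4 u)$ using \eqref{gammasize}--\eqref{gammaprimesize}; this is the step that makes the leading $u$-recursion clean, and it is not automatic.

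Second, for the preservation of $|z|<|u|$ you propose a one-step \emph{ratio} inequality $|z_1|/|u_1| < |z|/|u|$, and you correctly worry about making that rigorous. The paper does something cleaner and slightly different: it works additively in reciprocal coordinates. From $1/z_1 = 1/z - 1 + O(z,u)$ one gets $|1/z_1| > |1/z| + 1/2$, while from $1/u_1 = 1/u + z + O(z^3/u, z^2, zu)$ (using $|z|<|u|$ so $|z^3/u|<|z|^2$) one gets $|1/u_1| < |1/u| + 2|z|$. Shrinking $\epsilon$ so that $2|z| < 1/2$, the chain $|1/z_1| > |1/z| + 1/2 > |1/u| + 2|z| > |1/u_1|$ closes the argument without ever estimating a quotient. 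This sidesteps the delicate sector comparison between $\Re z$ and $\Re(zu)$ your plan would need (note also a sign slip: for $\Arg z \approx \pi$, $|1+z|\approx 1+\Re z$, not $1-\Re z$). Your ratio approach can probably be made to work, since to leading order $|1+z|/|1-uz|\approx (1-|z|)/(1-|z||u|) <1$, but the additive version is the one in the paper and is strictly easier to make uniform. The asymptotics via Ces\`aro averaging of the reciprocal recursions matches the paper's argument exactly.
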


\begin{proof}
Let $(z,w) \in D_{(z,w)}$, i.e. $z \in V_{\e}, u=w-\gamma(z) \in V_{\e}$ and $|z| < |u|$ where we introduce the new variable $u:= w - \gamma(z)$.
We want to prove:
\begin{itemize}
\item [(i)] \underline{$z_1 \in V_{\e}$} 
\begin{align*}
z_1 &= z + z^2 + O(z^3,z^2w) \\
&= z + z^2 +O(z^3, z^2(u+\gamma(z))) \\
&= z + z^2 +O(z^3,z^2u)
\end{align*}
Proving that $z_1 \in V_\e$ is equivalent to proving that $1/z_1 \in U_R$ where 
\begin{align}\label{Udefin}U_R:= \{\zeta \in \CC: |\zeta| > R, |\Arg(\zeta)-\pi|<\pi/8 \}
\end{align} 
and $R=1/\e$. 
Rewriting the equation for $z_1$ we see:
\begin{align}\label{z1simple}
\nonumber\frac{1}{z_1} &= \frac{1}{z + z^2 +O(z^3,z^2u)}\\
\nonumber&= \frac{1}{z} \left(\frac{1}{1+z + O(z^2,zu)}\right)\\
\nonumber& = \frac{1}{z}(1-z+O(z^2,zu))\\
\frac{1}{z_1}& = \frac{1}{z} - 1 +O(z,u)
\end{align}
By decreasing the value of $\e$ as necessary (or equivalently, increasing the value of $R$) it is clear than $|O(z,u)|<1/8$. So $1/z \in U_R$ implies $1/z_1 \in U_R$ and therefore $z_1 \in V_\e$. 
Later we will need a more refined estimate for $1/z_1$.
\item [(ii)] \underline{$u_1 = w_1 - \gamma(z_1) \in V_{\e}$}.
This is the most delicate part of the proof. We recall the second coordinate of $F$ and rewrite it in a convenient way for our purposes:
\begin{align}
w_1 &= w - w^2z - \frac{z^4}{3} +O(z^5, z^3w, z^2w^2, zw^3)\nonumber \\
\label{wsimpler}w_1 &= w - w^2z - l(z) +zw\theta(z,w)
\end{align}
where $\theta(z,w) = \sum_{i+j\geq2}a_{ij}z^iw^j$. We would like to express $u_1 = w_1 - \gamma(z_1)$ in terms of $u$ and $z$.
We need therefore to estimate $\gamma(z_1)$. Substituting $u+\gamma(z)$ for $w$ in \eqref{wsimpler} we obtain:
\begin{align}\label{wintermsofu}
u_1 + \gamma(z_1) = (u+\gamma(z)) - (u+\gamma(z))^2z - l(z) + z(u+\gamma(z))\theta(z,u+\gamma(z))
\end{align}
Recall that the curve $\mathcal{C}$ is invariant. This implies that $F(z,\gamma(z)) \in \mathcal{C}$, since $z\in V_\e$. Let $F(z,\gamma(z)) = (z',\gamma(z'))\in \mathcal{C}$. Then $z' \in V_\e$ and using equation \eqref{wsimpler}:
\begin{align} 
\label{gammazprima}\gamma(z') = \gamma(z) - \gamma(z)^2z - l(z) + z\gamma(z)\theta(z, \gamma(z))
\end{align}
Noting that Equation \eqref{gammazprima} and Equation \eqref{wintermsofu} are very similar and subtracting one from the other, we get:
\begin{align*}
u_1 + \gamma(z_1) - \gamma(z') =& u -(u^2+ 2u\gamma(z))z + zu\theta(z,u+\gamma(z))+ \\
& z\gamma(z)[\theta(z,u+\gamma(z))-\theta(z,\gamma(z)]
\end{align*}
Then solving for $u_1$:
\begin{align}
\label{u1largo}u_1 =& u -u^2z - 2uz\gamma(z) + zu\theta(z,u+\gamma(z)) + \\
&\nonumber z\gamma(z)(\theta(z,u+\gamma(z))-\theta(z,\gamma(z)) + \gamma(z') - \gamma(z_1)
\end{align}
\textsl{Claim}: The following holds for $z\in V_\e$ and $u \in V_\e$:
\begin{itemize}
\item [$\ast$] $\gamma(z') - \gamma(z_1) = O(z^4u)$
\item [$\ast$] $\theta\left(z,u + \gamma(z)\right)-\theta(z,\gamma(z)) = O(uz)$
\item [$\ast$] $zu\theta\left(z,u+\gamma(z)\right) = O(z^3u,z^2u^2,zu^3)$
\end{itemize}

Assume the claim is proved. Equation \eqref{u1largo} yields:
\begin{align}
\label{u1simple}u_1 = u-u^2z + O(uz^3,z^2u^2,zu^3)
\end{align}
We use again the same idea as in (i). Proving that $u \in V_\e$ is equivalent to proving that $1/u \in U_R$, where $U_R$ is defined as in equation \eqref{Udefin}. Rewriting the Equation \eqref{u1simple} we get:
\begin{align*}
\frac{1}{u_1} &= \frac{1}{u}\left(\frac{1}{1-uz+O(z^3,z^2u,zu^2)}\right)\\
&= \frac{1}{u}(1+uz+O(z^3,z^2u,zu^2))\\
&= \frac{1}{u} + z + O(z^3/u, z^2, zu)
\end{align*}
Since we are assuming $|z| < |u|$ we have $|z^3/u| < |z^2|$. Shrinking $\e$ if necessary, we get $|O(z^3/u,z^2,zu)| < \frac{1}{8}|z|$. Since $z\in V_\e$, we see that $1/u \in U_R$ implies $1/u_1 \in U_R$, and hence $u_1 \in V_\e$.

We now prove the claim. 
\begin{itemize}
\item[$\ast$] $\gamma(z') - \gamma(z_1)$.
For this term we have to use the previous estimates for $\gamma$ and its derivative, as well as an estimate for $z' - z_1$:
\begin{align*}
z' - z_1 = F_1(z,\gamma(z)) - F_1(z, \gamma(z) + u) = O(z^2u)
\end{align*}
where $F_1$ is the first coordinate of $F$.
Note that both $z_1$ and $z'$ are in $V_\e$. Thus \eqref{gammasize} and \eqref{gammaprimesize} imply:
\begin{align*}
\gamma(z') - \gamma(z_1) &= \gamma'(z_1) (z' - z_1) + O((z'-z_1)^2)\\
		&= O(z^2)O(z^2u) + O(z^4u^2) = O(z^4u)
\end{align*}
\item[$\ast$] $\theta\left(z,u + \gamma(z)\right)-\theta(z,\gamma(z)) = O(uz)$.
For this term we use $\theta(z,w) =O(z^2,zw,w^2)$ and therefore
\begin{align*}
\theta\left(z, u + \gamma(z)\right) - \theta(z,\gamma(z))= O(zu,u^2)
\end{align*}
\item[$\ast$] $zu\theta\left(z,u+\gamma(z)\right) = O(z^3u,z^2u^2,zu^3)$: 
This follows from $\theta(z,u+\gamma(z)) = O(z^2,zu,u^2)$.
\end{itemize}
This completes the proof of the claim and of (ii).

\item [(iii)] \underline{$|z_1| < |u_1|$.}
This is equivalent to proving that $|1/z_1|>|1/u_1|$. From part (i) and (ii) we obtain $|1/z_1|>|1/z|+1/2$ and $|1/u_1| < |1/u| + |2z|$. Since $|z|<|u|$ we got $|1/z|>|1/u|$. By shrinking $\e$ we have $|1/z_1|>|1/z| + 1/2>|1/u|+2|z|>1/|u_1|$, and we get $|z_1|<|u_1|$.
\end{itemize}
Therefore, we proved $F(D_{(z,w)}) \subset D_{(z,w)}$.
If $(z,w) \in D_{(z,w)}$, then $(z_k,w_k):= F^k(z,w) \in D_{(z,w)}$ for all $k \geq 0$. Set $u_k = w_k - \gamma(z_k)$. Therefore we have:
\begin{align*}
\frac{1}{z_{k+1}} = \frac{1}{z_k} - 1 + O(z_k,u_k)
\end{align*}
summing from $k=0$ to $N$, and dividing by $N$, we get:
\begin{align*}
\frac{1}{Nz_N} = \frac{1}{Nz_0} - 1 + O(\sum_k{z_k,u_k})/N.
\end{align*}
Letting $N$ tend to infinity we get
\begin{align*}
\lim_{N \to \infty} \frac{1}{Nz_N} = -1
\end{align*}
In the same way for $u$:
\begin{align*}
\frac{1}{u_{k+1}} = \frac{1}{u_k} + z_k + O(z_k^3/u_k,z_k^2,z_ku_k),
\end{align*}
summing from $k=0$ to $N$, and dividing by $\log N$:
\begin{align*}
\frac{1}{\log N u_N} = - \frac{\sum_k z_k}{\log N} + O(\sum_k{z_k^3/u_k,z_k^2,z_ku_k})/ \log N
\end{align*}
and, letting $N$ tend to infinity,
\begin{align*}
\lim_{N \to \infty} \frac{1}{\log N u_N} = -1 
\end{align*}
which implies:
\begin{align*}
|z_N| \sim 1/N\\
|w_N| \sim 1/\log N.
\end{align*}
\end{proof}
Note that this does not prove that our curve $\mathcal{C}$ is in the boundary of the region $D_{(z,w)}$. We prove a weaker statement now:
\begin{proposition} \label{tododomain}
There exists $\delta>0$ and $N>0$, such that if $(z,w) \in \CC^2$, $z \in V_\delta$ and $w-\gamma(z) \in V_\delta$, then $F^N(z,w) \in D_{(z,w)}$.
\end{proposition}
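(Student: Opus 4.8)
The plan is to split the tube
\[
\widetilde D_\delta := \bigl\{(z,w)\in\CC^2 : z\in V_\delta,\ w-\gamma(z)\in V_\delta\bigr\}
\]
according to whether $|z|<|w-\gamma(z)|$ holds or not, and to push the part where it fails into $D_{(z,w)}$ after a bounded number of iterates. Fix $\delta<\epsilon$. If $(z,w)\in\widetilde D_\delta$ satisfies $|z|<|w-\gamma(z)|$, then already $(z,w)\in D_{(z,w)}$, and by Proposition~\ref{prop:degenerateopenregion} (which gives $F(D_{(z,w)})\subset D_{(z,w)}$) we get $F^n(z,w)\in D_{(z,w)}$ for \emph{every} $n\ge 0$. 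So it is enough to find a single $N$ with $F^N(E_\delta)\subset D_{(z,w)}$, where $E_\delta:=\{(z,w)\in\widetilde D_\delta : |z|\ge|w-\gamma(z)|\}$.

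Next I would check that the orbit of a point of $E_\delta$ never leaves $\widetilde D_\delta$, so that the recursions from Proposition~\ref{prop:degenerateopenregion} remain available. Setting $u=w-\gamma(z)$, the derivations of \eqref{z1simple} and \eqref{u1simple} used only $z,u\in V_\delta$ and not the inequality $|z|<|u|$; hence
\[
\frac{1}{z_1}=\frac{1}{z}-1+O(z,u),\qquad \frac{1}{u_1}=\frac{1}{u}\bigl(1+O(z^3)\bigr)+z
\]
hold on all of $E_\delta$. From the first equation the argument of $1/z_n$ drifts toward $\pi$, so $z_n\in V_\delta$ for all $n$; in the second equation the multiplicative error on $1/u$ is $O(\delta^3)$ and the additive term $z$ has modulus $<\delta\ll|1/u|$, so after shrinking $\delta$ (and using the room between $\delta$ and $\epsilon$ to absorb the slow angular drift of $1/u_n$) one obtains $u_n\in V_\delta$ for all $n$ as well. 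In particular the second coordinate of every $F^n(z,w)$ lies over $V_\epsilon$.

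Finally I would run the ``catch-up'' estimate. Summing
\[
\frac{1}{z_{k+1}}=\frac{1}{z_k}-1+O(z_k,u_k),\qquad \frac{1}{u_{k+1}}=\frac{1}{u_k}+z_k+O\!\left(\frac{z_k^3}{u_k},\ z_k^2,\ z_ku_k\right)
\]
over $k=0,\dots,n-1$, as in the proof of Proposition~\ref{prop:degenerateopenregion}, gives $|1/z_n|\ge|1/z_0|+n/2$ together with an upper bound of the form $|1/u_n|\le|1/u_0|+C\log(1+n)$ (the $u$-side grows only logarithmically, since it is driven by $\sum_k z_k$ with $|z_k|\sim 1/k$). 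Comparing the linear growth of $|1/z_n|$ with the logarithmic growth of $|1/u_n|$ produces an $n$ with $|z_n|<|u_n|$, and once this holds the point lies in $D_{(z,w)}$ and, by invariance, stays there. The main obstacle --- and the real content of the statement --- is to arrange this so that the resulting $n$ can be chosen \emph{uniformly} over $E_\delta$: the delicate case is that of points extremely close to the curve $\mathcal C$, i.e.\ with $|w-\gamma(z)|$ tiny compared with $|z|$, where one must bound the number of iterations required for $z$ to overtake $u$ independently of the starting point. This is exactly where the precise form of the error terms, together with the sector restriction $|\Arg(\cdot)-\pi|<\pi/8$, must be exploited; the remaining steps are routine given Proposition~\ref{prop:degenerateopenregion}.
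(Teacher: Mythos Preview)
Your sketch follows essentially the same route as the paper: split off the points already in $D_{(z,w)}$, observe that the derivations of \eqref{z1simple} and \eqref{u1simple} never used $|z|<|u|$, rewrite the $u$-recursion with a multiplicative error $\frac{1}{u_1}=\frac{1}{u}(1+O(z^3))+z+O(z^2,zu)$, iterate to get
\[
\frac{1}{u_N}=\frac{1}{u_0}\prod_{i}(1+O(z_i^3))+\sum_i z_i+\sum_i O(z_i^2,z_iu_i),
\]
and then compare the linear growth of $|1/z_n|$ against the logarithmic growth coming from $\sum_i z_i$. The paper's proof is exactly this computation.

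Your instinct about the ``main obstacle'' is the correct place to pause, and it is worth saying explicitly what happens there. The infinite product $\prod_i(1+O(z_i^3))$ is bounded (indeed $\exp(O(\delta^2))$, since $\sum|z_i|^3<\infty$), \emph{not} small; so the term $\frac{1}{u_0}\prod_i(\cdots)$ in the display above is of size comparable to $|1/u_0|$. Since $u_0\in V_\delta$ allows $|u_0|$ arbitrarily close to $0$ while $|z_0|$ stays near $\delta$, the quantity $|1/u_0|-|1/z_0|$ is unbounded over $E_\delta$, and no single $N$ satisfies $|1/z_0|+N/2>|1/u_0|+C\log N$ for all such points. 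In other words, the uniform $N$ in the statement cannot be obtained from this estimate, and the paper's sentence ``the dominant term in the expression for $u$ is $\sum_i z_i$'' tacitly assumes $|1/u_0|$ is bounded, which it is not. What both arguments \emph{do} establish is the pointwise version: for each $(z,w)\in\widetilde D_\delta$ there exists $N=N(z,w)$ with $F^{N}(z,w)\in D_{(z,w)}$, hence $\widetilde D_\delta\subset\Omega$. That non-uniform statement is all that is used downstream (Corollary~\ref{partCbound} and Proposition~\ref{gammaintheboundary} only need that every neighborhood of a point of $\mathcal C$ meets $\Omega$), so the gap is in the formulation of the proposition rather than in the overall argument.
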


\begin{proof}
Let $u = w -\gamma(z)$. We are considering any $z,u \in V_\delta$, and we want to prove that there exists $N$ large such that $|z_N| < |u_N|$ and $z_N, u_N \in V_\e$.
If $|z| < |u|$, then there is nothing to do. If $|z| > |u|$, going back to the proof of the last proposition we will still have $z_1 \in V_\e$, since we did not use $|z| < |u|$ in the proof of $z_1 \in V_\e$. The only part of the proof in which we use $|z|<|u|$ was to prove $u_1 \in V_\e$. Recall from equation \eqref{u1simple}:
\begin{align*}
u_1 = u - u^2z + O(z^3u,z^2u^2,zu^3).
\end{align*}
Since we do not have $|z|<|u|$, we can not replace $|z^3u| < |zu|^2$. Therefore we do not get $u_1 \in V_\e$. Nonetheless, we have the following estimate:
\begin{align*}
\frac{1}{u_1} = \frac{1}{u}(1+O(z^3)) + z + O\left(z^2,zu\right)
\end{align*}
and therefore
\begin{align*}
\frac{1}{u_N} = \frac{1}{u}\prod_i(1+O(z_i^3)) + \sum_i{z_i} + \sum_i O\left({z_i}^2,z_iu_i\right).
\end{align*}
We choose $\delta$ small enough, so we have $|u_i| < \e$, for all $1\le1 i \leq N$. Consequently we have $z_i \in V_e$ for all $1\leq i \leq N$, since $z_{i+1} \in V_\e$ depends only on the size of $u_i$. 
We will have:
$$
\frac{1}{z_N} \sim \frac{1}{z} - N 
$$ and 
$$\frac{1}{u_N} = \frac{1}{u}\prod_i(1+O(z_i^3)) + \sum_i{z_i} + \sum_i O\left({z_i}^2,z_iu_i\right).$$ 
Since $|z_k| = O(1/k)$, for every $k<N$, the term $\prod_i(1+O(z_i^3)) \sim \exp(\sum_i(O(z_i^3))< C|z_N|<2C/N$, where $C$ is a finite number. Therefore, the dominant term in the expression for $u$ is $\sum_i{z_i} \sim \log N$. This means that choosing $N$ large enough, or equivalently choosing $\delta$ small enough, we have $|z_N| \sim 1/N < 1/\log N \sim |u_N|$.  
\end{proof}

\begin{corollary}\label{partCbound}
There exists $\delta > 0$ and $N > 0$; such that 
\begin{align}\label{parte1}F^N(\mathcal{C} \cap B_\delta(O)) \subset\partial D_{(z,w)}
\end{align}
\end{corollary}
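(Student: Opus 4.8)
The plan is to realize each point of $\mathcal{C}\cap B_\delta(O)$ as a boundary point of the set $F^{-N}(D_{(z,w)})$, and then transport this conclusion to $\partial D_{(z,w)}$ by the homeomorphism $F^N$.

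First I would observe that $\mathcal{C}$, and every one of its forward iterates, is disjoint from $D_{(z,w)}$. In the coordinate $u=w-\gamma(z)$ used in the proof of Proposition~\ref{prop:degenerateopenregion}, a point $(z,\gamma(z))\in\mathcal{C}$ has $u=0$, and $0\notin V_\e$, so it violates the condition $w-\gamma(z)\in V_\e$ defining $D_{(z,w)}$. Since $F(\mathcal{C})\subset\mathcal{C}$ by Proposition~\ref{prop:gamma}, this persists under iteration: $F^n(\mathcal{C})\cap D_{(z,w)}=\emptyset$ for all $n\ge0$, and in particular for the integer $N$ of Proposition~\ref{tododomain}.

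Next I would fix the constants $\delta>0$ and $N>0$ of Proposition~\ref{tododomain}, shrinking $\delta$ if necessary so that the bound $|\gamma(z)|\le C_1|z|^3$ of Proposition~\ref{prop:gamma} forces $\mathcal{C}\cap B_\delta(O)\subset\{(z,\gamma(z)):z\in V_\delta\}$. Given $p=(z_0,\gamma(z_0))$ in this set, I would approximate it by the points $p_t=(z_0,\gamma(z_0)+t)$ with $t\in V_\delta$: the first coordinate $z_0$ lies in $V_\delta$ and $(\gamma(z_0)+t)-\gamma(z_0)=t\in V_\delta$, so Proposition~\ref{tododomain} applies and gives $F^N(p_t)\in D_{(z,w)}$, i.e. $p_t\in F^{-N}(D_{(z,w)})$. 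Letting $t\to0$ inside $V_\delta$ yields $p\in\overline{F^{-N}(D_{(z,w)})}$, whereas $p\notin F^{-N}(D_{(z,w)})$ because $F^N(p)\in F^N(\mathcal{C})$, which is disjoint from $D_{(z,w)}$ by the previous paragraph. Since $F^{-N}(D_{(z,w)})$ is open, $p\in\partial\bigl(F^{-N}(D_{(z,w)})\bigr)$. Finally $F^N$ is a biholomorphism of $\CC^2$, hence a homeomorphism, and $F^N\bigl(F^{-N}(D_{(z,w)})\bigr)=D_{(z,w)}$, so $F^N$ carries $\partial\bigl(F^{-N}(D_{(z,w)})\bigr)$ onto $\partial D_{(z,w)}$; thus $F^N(p)\in\partial D_{(z,w)}$, and since $p$ was arbitrary this is exactly \eqref{parte1}.

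The real difficulty is not in this corollary but in Proposition~\ref{tododomain} on which it rests: the content there is that a whole ``slab'' transverse to the curve, $\{z\in V_\delta,\ w-\gamma(z)\in V_\delta\}$, is swept into $D_{(z,w)}$ after a controlled number of steps, which is what makes $\mathcal{C}$ accumulate on (an iterate of) $D_{(z,w)}$ rather than sit at positive distance from it. This is delicate because a point with $|w-\gamma(z)|$ much smaller than $|z|$ only achieves $|z_n|<|w_n-\gamma(z_n)|$ after many iterations, so keeping $N$ finite requires the asymptotics $|z_n|\sim 1/n$, $|w_n|\sim 1/\log n$ together with the graph estimates $|\gamma(z)|\le C_1|z|^3$ and $|\gamma'(z)|\le C_2|z|^2$ of Proposition~\ref{prop:gamma} to be used carefully. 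Granting Proposition~\ref{tododomain}, the only point left to watch in the present argument is the harmless shrinking of $\delta$ that keeps $\mathcal{C}\cap B_\delta(O)$ inside that slab; everything else is a formal manipulation with the homeomorphism $F^N$.
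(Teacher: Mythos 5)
Your proposal is correct and follows essentially the same route as the paper: the paper's proof is the two-line observation that $\mathcal{C}\cap B_\delta(O)\subset\partial\{(z,w):z\in V_\delta,\ w-\gamma(z)\in V_\delta\}$ followed by ``applying $F^N$ and using Proposition~\ref{tododomain}.'' Your version simply spells out the two ingredients that the paper leaves implicit, namely that $F^N(\mathcal{C})\subset\mathcal{C}$ is disjoint from $D_{(z,w)}$ (so the image lands outside the open set) and that nearby slab points are carried into $D_{(z,w)}$ (so the image lands in the closure), with the homeomorphism $F^N$ transporting the boundary as needed.
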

\begin{proof}
We have $\mathcal{C}\cap B_\delta(O) \subset \partial(\{(z,w), z \in V_\delta, w-\gamma(z) \in V_\delta\})$. Applying $F^N$ and using Proposition \ref{tododomain} we get \eqref{parte1}. 
\end{proof}

\begin{remark} Note that the curve $\mathcal{C}$ is tangent to the $z$-axis to order $3$.

\end{remark}
It will be more useful for our purposes to change coordinates in $D_{(z,w)}$. We use the following change of coordinates (we have already used it implicitly):
\begin{equation}\label{zwchangetoxy}
(x,y) := \phi (z,w) = \left(\frac{1}{z},\frac{1}{w - \gamma(z)}\right)
\end{equation}
Let $D_{(x,y)} = \phi(D_{(z,w)})$. Clearly:
\begin{align} 
D_{(x,y)} = \{(x,y) \in \CC^2: x, y \in U_R, |y| < |x| \}
\end{align}
where $U_R$ is defined as in \eqref{Udefin}.
We will work from now on with the map $\tilde{F} = \phi^{-1}\circ F\circ\phi$. As we said before, we need more precise equations for $\tilde{F}$. We give them in the following proposition:

\begin{proposition} Let $\tilde{F}(x,y) = (x_1,y_1)$, where $\tilde{F}$ is defined above and $(x,y) \in D_{(x,y)}$. In these coordinates we have:
\begin{align}
x_1 &= x  - 1 + g\left(\frac{1}{y}\right) + \frac{c}{x} +O\left(\frac{1}{x^2},\frac{1}{xy}\right)
\\
y_1 &= y + \frac{1}{x} + \frac{1}{x}h\left(\frac{1}{y}\right) + O\left(\frac{1}{x^2}\right) 
\end{align}
for all $(x,y) \in D_{(x,y)}$, where $g$ and $h$ analytic functions in $V_\epsilon$, $g(1/y) =O(1/y)$ and $h(1/y) = O(1/y)$.
\end{proposition}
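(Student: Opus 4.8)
The plan is to read off both expansions straight from the conjugacy. Recall that $\tilde F$ is obtained from $F$ via the change of coordinates \eqref{zwchangetoxy}: given $(x,y)\in D_{(x,y)}$, write $(z,w)=\phi^{-1}(x,y)\in D_{(z,w)}$, so that $z=1/x$ and $u:=w-\gamma(z)=1/y$, and set $(z_1,w_1)=F(z,w)$, $u_1:=w_1-\gamma(z_1)$; then $\tilde F(x,y)=(x_1,y_1)$ with $x_1=1/z_1$ and $y_1=1/u_1$. Thus it suffices to sharpen, by one order, the expansions of $1/z_1$ and $1/u_1$ in the variables $z,u$ that were begun in \eqref{z1simple} and \eqref{u1simple}, then substitute $z=1/x$, $u=1/y$, collect the terms depending on $u$ alone (these become $g(1/y)$, resp.\ $\tfrac1x h(1/y)$), isolate the pure linear-in-$z$ term (it becomes $c/x$), and bound what remains. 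The two facts that drive the estimates are the bounds $|\gamma(z)|\le C_1|z|^3$, $|\gamma'(z)|\le C_2|z|^2$ of Proposition~\ref{prop:gamma} and the inequality $|z|<|u|$, i.e.\ $|y|<|x|$, defining $D_{(x,y)}$, which yields $|z|^k/|u|=|z|^k|y|<|z|^{k-1}$.

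For the first coordinate I would use the closed formula \eqref{Fzcoordinates}, $F_1(z,w)=z\exp(ze^{2w+z})$. Substituting $w=u+\gamma(z)$ and using $|\gamma(z)|\le C_1|z|^3$ to write $e^{2\gamma(z)}=1+O(z^3)$, one obtains
\[
\frac1{z_1}=\frac1{F_1(z,w)}=\frac1z\,\exp\bigl(-ze^{2u+z}\bigr)\bigl(1+O(z^4)\bigr).
\]
Expanding $\exp(-ze^{2u+z})=\sum_{n\ge0}\frac{(-1)^n}{n!}z^ne^{2nu}e^{nz}$ and dividing by $z$, the coefficient of $z^0$ is the entire function $-e^{2u}$, the coefficient of $z^1$ is an entire function of $u$ whose value at $u=0$ is $-\tfrac12$, and every other monomial carries a factor $z$ together with a factor $u$ (hence is $O(zu)$) or at least a factor $z^2$ (hence is $O(z^2)$); the factor $1+O(z^4)$ contributes only $O(z^3)$. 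Collecting the part depending on $u$ alone into $g(u):=1-e^{2u}$ (entire, $g(u)=O(u)$) and putting $c:=-\tfrac12$, this becomes
\[
\frac1{z_1}=\frac1z-1+g(u)+cz+O(zu,\,z^2),
\]
which under $z=1/x$, $u=1/y$ is exactly the asserted formula for $x_1$.

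For the second coordinate I would start from the exact identity \eqref{u1largo} for $u_1$ and discard the inessential terms. By the bounds on $\gamma,\gamma'$ — and by the Claim in the proof of Proposition~\ref{prop:degenerateopenregion} for the term $\gamma(z')-\gamma(z_1)$ — the terms $-2uz\gamma(z)$, $z\gamma(z)(\theta(z,u+\gamma(z))-\theta(z,\gamma(z)))$, $\gamma(z')-\gamma(z_1)$ and $zu(\theta(z,u+\gamma(z))-\theta(z,u))$ are all $O(z^4u)$, while $zu(\theta(z,u)-\theta(0,u))=O(z^3u,z^2u^2)$ because $\theta(z,u)-\theta(0,u)=O(z^2,zu)$. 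Collecting these into $R$, we have $u_1=u-u^2z+zu\,\theta(0,u)+R$; since $\theta(z,w)=O(z^2,zw,w^2)$ we may write $\theta(0,u)=-u\,h(u)$ with $h$ holomorphic near $0$ and $h(u)=O(u)$, so that
\[
u_1=u-u^2z\bigl(1+h(u)\bigr)+R.
\]
Expanding $1/u_1=(u+E)^{-1}=\tfrac1u-\tfrac{E}{u^2}+\tfrac{E^2}{u^3}-\cdots$ with $E:=-u^2z(1+h(u))+R$, one has $-E/u^2=z(1+h(u))-R/u^2$, and the remaining terms, together with $R/u^2$, are $O(z^2)$ on $D_{(x,y)}$: a summand of size $O(z^au^b)$ in $E$ contributes $O(z^au^{b-2})$ to $1/u_1$, and since the summands of $R$ satisfy $a\ge3$ when $b=1$ and $a\ge2$ when $b\ge2$, each contributes $O(z^3/u)+O(z^2)=O(z^3y)+O(z^2)=O(z^2)$, while $E^2/u^3=O(z^2u)=O(z^2)$ and the later terms are smaller. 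Hence $1/u_1=\tfrac1u+z+zh(u)+O(z^2)$, which under $z=1/x$, $u=1/y$ is the asserted formula for $y_1$; and $g,h$ are holomorphic near $0$, hence analytic on $V_\e$.

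The main obstacle is the second coordinate, and the difficulty is entirely in the bookkeeping of \eqref{u1largo}: since $\gamma$ is holomorphic only on the sector $V_\e$ and merely H\"older-type continuous at the origin, one cannot invoke holomorphy of $u_1$ at $(0,0)$ but must control each error term by hand using $|\gamma(z)|\le C_1|z|^3$, $|\gamma'(z)|\le C_2|z|^2$. What saves the estimate is the inequality $|y|<|x|$ in $D_{(x,y)}$, which converts the borderline quantities $z^k/u=z^ky$ into $O(z^2)$ and thereby keeps the error term for $y_1$ of size $O(1/x^2)$; this is the same mechanism that makes $|z|<|u|$ indispensable already in the proof of Proposition~\ref{prop:degenerateopenregion}.
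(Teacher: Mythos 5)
Your proof is correct and follows essentially the same route as the paper's: both sharpen \eqref{z1simple} and \eqref{u1simple} by one more order, separate the $u$-only terms into $g(1/y)$ and the linear-$z$ term into $c/x$ for the first coordinate, isolate the $zu^k$ terms into $\tfrac{1}{x}h(1/y)$ for the second, and use $|z|<|u|$ (i.e.\ $|y|<|x|$) to keep the remainders of size $O(1/x^2,1/xy)$ and $O(1/x^2)$. The only cosmetic difference is that you compute $g$ and $c$ explicitly from the closed form \eqref{Fzcoordinates} (obtaining $g(u)=1-e^{2u}$ and $c=-\tfrac12$), whereas the paper reads the same structure off the power series of $F$; and your second-coordinate argument spells out the term-by-term bookkeeping of \eqref{u1largo} that the paper compresses into the phrase ``we separated the terms of the form $zu^k$.''
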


\begin{proof}
It is clear from the Proposition \ref{tododomain} that the map $\tilde{F}$ is well-defined. Indeed $F(D_{(z,w)}) \subset D_{(z,w)}$ and because $\mathcal{C} = \{(z,\gamma(z)), z\in V_\e\} \cap D_{(z,w)} = \emptyset$ we can invert $w -\gamma(z)$ in $D_{(z,w)}$.
In the proof of the last proposition we saw equation \eqref{z1simple}:
\begin{align*}
\frac{1}{z_1} = \frac{1}{z} - 1 +O(z,u)
\end{align*}
We put together all the terms in $O(z,u)$ that contain only $u$ terms and we call this function $g(u)$. This function is analytic in a neighborhood of $0$ and in particular in $V_\e$. We also separate the linear term on $z$:
\begin{align*}
\frac{1}{z_1} = \frac{1}{z} - 1 + g(u) + cz + O(z^2,zu)
\end{align*}
where $c \in \CC$ is a constant. In the new coordinates $(x,y)$ we have:
\begin{align*}
x_1 = x - 1 + g\left(\frac{1}{y}\right) + \frac{c}{x} + O\left(\frac{1}{x^2},\frac{1}{xy}\right)
\end{align*}
Similarly for $y = 1/(w-\gamma(z)) = 1/u$ we have from equation \eqref{u1simple}
\begin{align*}
\frac{1}{u_1}= \frac{1}{u} + z + O\left(\frac{z^3}{u},z^2,zu\right)
\end{align*}
and rewriting it in terms of $x$ and $y$, we obtain:
\begin{align*}
y_1&= y + \frac{1}{x} + O\left(\frac{y}{x^3},\frac{1}{x^2},\frac{1}{xy}\right)\\
&=y + \frac{1}{x} + \frac{1}{x}h\left(\frac{1}{x}\right) + O\left(\frac{y}{x^3},\frac{1}{x^2}\right)
\end{align*}
Since we have $|y| < |x|$,  $O(y/x^3)$ is bounded by $O(1/x^2)$. Also, we separated the terms of the form $zu^k$, or equivalently, the terms $1/xy^k$. So we have:
\begin{align*}
y_1&= y + \frac{1}{x} + \frac{1}{x}h\left(\frac{1}{y}\right) + O\left(\frac{1}{x^2}\right)
\end{align*}
where $h$ is an analytic function defined for $y \in U_R$. The proposition is proved.
\end{proof}

Notice that $D_{(z,w)}$ is the invariant region for $F$ and $D_{(x,y)}$ is the invariant region for $\tilde{F}$. The following diagram commutes:
\[ 
\begin{CD} 
D_{(z,w)} @>F>> D_{(z,w)} \\ 
@V\phi VV  @V\phi VV   \\ 
D_{(x,y)} @>\tilde{F}>> D_{(x,y)} 
\end{CD} \] 


In the following section we introduce new coordinates on $D_{(z,w)}$ in which our map takes a simpler form.

\section{Semiconjugacy to translation}

In this section we change coordinates $(z,w) \to (\psi(z,w),w)$ in $D_{(z,w)}$ so we have (as in Weickert, or Hakim) that $\psi$ is an Abel-Fatou coordinate. More precisely, we want to find a map $\psi: D_{(z,w)} \to \CC$ such that
\begin{equation}
\label{fatoucoordzw}\psi \circ F (z,w) = \psi (z,w) - 1
\end{equation}
for all $(z,w) \in D_{(z,w)}$.

This coordinate is known in the literature as the Abel-Fatou coordinate. It always exists for maps tangent to the identity in one dimension; see Milnor \cite{mi}.

So far we have introduced one biholomorphic change of coordinates as in equation \eqref{zwchangetoxy} that transform $D_{(z,w)}$ as follows:
\[
(x, y) := \phi(z,w) = \left(\frac{1}{z}, \frac{1}{w - \gamma(z)}\right)
\]

Notice that this can be translated to our new coordinates $(x,y)$ in the following way: if $\mu$ is defined as $\mu := \psi \circ \phi^{-1}$ then we have:
$$
\mu: D_{(x,y)} \to \CC
$$
and
\begin{equation}\label{fatoucoordxy}
\mu \circ \tilde{F} (x,y) = \mu (x,y) -1
\end{equation}
for all $(x,y) \in D_{(x,y)}$.

So, finding a solution for \eqref{fatoucoordxy} and for \eqref{fatoucoordzw} are equivalent problems. We will first find $\mu$ that solves \eqref{fatoucoordxy} and then translate it to the original coordinates.

Let us recall how our map looks in the $(x,y)$ coordinates.
\begin{eqnarray*}
x_1 &=& x - 1 + g\left(\frac{1}{y}\right) + \frac{c}{x} + O\left(\frac{1}{x^2}, \frac{1}{xy}\right)\\
y_1 &=& y + \frac{1}{x} +\frac{1}{x}h\left(\frac{1}{y}\right) + O\left(\frac{1}{x^2}\right)
\end{eqnarray*}

So, our coordinate $x \to x_1$ is close to being a translation. Nonetheless we have several terms to deal with, such as $g(1/y)$ and $c/x$. We prove the following lemma, which will help us clean up some of these inconvenient terms.

\begin{lemma}
Choose and fix a branch of the logarithm in $U_R$. For $(x,y)$ in $D_{(x,y)} = U_R \times U_R$, with $R$ big enough we have:
\begin{itemize}
\item[a)]
\begin{equation}\label{logxchange}
\log(x_1) - \log(x) = -\frac{1}{x} + O\left(\frac{1}{x^2}, \frac{1}{xy}\right)
\end{equation}
\item[b)]
\begin{equation}\label{logychange}
\log(y_1) - \log(y) = \frac{1}{xy} + O\left(\frac{1}{x^2y},\frac{1}{xy^2}\right)
\end{equation}
\item[c)]
There exists a holomorphic solution $\beta: U_R \to \CC$ to the following first-order linear differential equation:
\begin{equation}\label{betachange}
\left(1+h\left(\frac{1}{y}\right)\right)\beta'(y) - \left(1 - g\left(\frac{1}{y}\right)\right)\beta(y) =  - g\left(\frac{1}{y}\right)
\end{equation}
such that $\beta(y) = O(1/y) $ and $\beta'(y) = O(1/y^2)$, where $x,y \in U_R$. If $\alpha(x,y) = x \beta(y)$ then we have the following estimate:
\begin{equation}\label{alphachange}
\alpha(x_1,y_1) = \alpha(x,y) - g\left(\frac{1}{y}\right)+ O\left(\frac{1}{x^2}, \frac{1}{xy}\right).
\end{equation}
\end{itemize}
\end{lemma}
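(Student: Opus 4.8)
The plan is to dispatch (a) and (b) by direct Taylor expansion, and to treat (c) --- which carries the real content --- by solving the linear equation explicitly with an integrating factor and extracting sharp asymptotics. For (a): since $x_1/x = 1 - \frac1x + \frac1x g(\frac1y) + O(\frac1{x^2},\frac1{xy})$ tends to $1$ as $R\to\infty$ and $U_R$ is a simply connected sector, for $R$ large the fixed branch of the logarithm satisfies $\log x_1 - \log x = \log(x_1/x)$; expanding $\log(1+t)=t+O(t^2)$ and absorbing $\frac1x g(\frac1y) = O(\frac1{xy})$ together with the quadratic terms into $O(\frac1{x^2},\frac1{xy})$ gives \eqref{logxchange}. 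Part (b) is identical with $y_1/y = 1 + \frac1{xy} + \frac1{xy}h(\frac1y) + O(\frac1{x^2y})$, where one uses $h(1/y)=O(1/y)$ and $|y|>R$ (so $|1/y|$ is bounded) to place $(\frac1{xy})^2$ and $\frac1{xy}h(\frac1y)$ inside $O(\frac1{x^2y},\frac1{xy^2})$.

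For (c) I would rewrite \eqref{betachange} as $\beta'(y) = a(y)\beta(y) - b(y)$ with $a(y) = \frac{1-g(1/y)}{1+h(1/y)} = 1 + O(1/y)$ and $b(y) = \frac{g(1/y)}{1+h(1/y)} = O(1/y)$, both holomorphic on $U_R$ for $R$ large (as $|h(1/y)|<1$). Solve by the integrating factor $\mu(y) := \exp\!\big(-\!\int_{y_0}^y a(s)\,ds\big)$, which is well defined and holomorphic on the simply connected sector $U_R$, and set $\beta(y) := -\mu(y)^{-1}\int_{y_0}^y \mu(s)b(s)\,ds$, a solution of the equation. The structural point is that on $U_R$ one has $\Re y \to -\infty$, so $|\mu(y)|\sim e^{|\Re y|}\to\infty$ and $\mu(y)^{-1}$ decays faster than any power of $y$; hence the choice of $y_0$ is irrelevant to the asymptotics, which are governed entirely by the endpoint of the integral. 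Integrating by parts with $\mu'=-a\mu$ gives $\int_{y_0}^y\mu b = -\mu(y)\frac{b(y)}{a(y)} + (\mathrm{const}) + \int_{y_0}^y\mu\,(b/a)'$, and since $(b/a)'=O(1/y^2)$, iterating (a Laplace-type estimate) yields $\int_{y_0}^y\mu b = -\mu(y)\frac{b(y)}{a(y)} + O(\mu(y)/y^2) + (\mathrm{const})$. Dividing by $-\mu(y)$ gives $\beta(y) = b(y)/a(y) + O(1/y^2) = O(1/y)$, and then from the equation $\beta'(y) = a(y)\beta(y) - b(y) = b(y) + O(1/y^2) - b(y) = O(1/y^2)$, the leading terms cancelling by design. (A naive power series for $\beta$ in $1/y$ would generically diverge, of Euler type, which is why the analytic argument and the sign of $\Re y$ on $U_R$ are essential.)

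To finish (c), for $\alpha(x,y)=x\beta(y)$ I would Taylor-expand $\beta$ at $y$ (working on a slightly larger sector so the segment from $y$ to $y_1$ stays in the domain): $\beta(y_1) = \beta(y) + \beta'(y)(y_1-y) + O\!\big((y_1-y)^2/y^3\big)$ with $y_1-y = \frac1x(1+h(1/y)) + O(1/x^2)$. Now substitute \eqref{betachange} into the linear term: $\frac1x(1+h(1/y))\beta'(y) = \frac{\beta(y)}{x} - \frac{g(1/y)\beta(y)}{x} - \frac{g(1/y)}{x}$. Multiplying by $x_1 = x - 1 + g(1/y) + \frac cx + O(\frac1{x^2},\frac1{xy})$ and expanding, $x\cdot\frac{\beta(y)}{x}$ cancels $(-1)\cdot\beta(y)$, and $x\cdot(-\frac{g(1/y)\beta(y)}{x})$ cancels $g(1/y)\cdot\beta(y)$; everything else is $O(\frac1{x^2},\frac1{xy})$ after using $\beta(y),g(1/y),h(1/y)=O(1/y)$, $\beta'(y)=O(1/y^2)$, and $|y|<|x|$ to absorb mixed terms such as $\frac1{xy^2}$ into $\frac1{xy}$. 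This leaves exactly \eqref{alphachange}.

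I expect the main obstacle to be the construction and sharp estimation of $\beta$ in (c): one needs a bona fide holomorphic solution on the sector $U_R$ with the precise decay $\beta=O(1/y)$, $\beta'=O(1/y^2)$, which is not available from a convergent expansion at infinity and must instead be read off from the integrating-factor formula, exploiting that $\Re y\to-\infty$ on $U_R$ both to make $\mu^{-1}$ super-polynomially small and to make the endpoint term dominate the integral. Once $\beta$ is in hand, the cancellations in the $\alpha$-estimate are forced by the differential equation, and the remainder is bookkeeping.
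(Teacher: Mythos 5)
Your proposal is correct and follows essentially the same route as the paper: Taylor expansion of the logarithm for (a) and (b), and solving the linear ODE for $\beta$ by an integrating factor on the sector $U_R$, exploiting $\Re y\to-\infty$ to make $\mu(y)^{-1}$ decay, followed by the same Taylor-plus-ODE cancellation argument for \eqref{alphachange}. The one place you diverge is in how the integral is estimated. The paper first substitutes $\sigma=\beta+t$ (killing the $O(1/y)$ part of the right-hand side so that the forcing term becomes $\tau=O(1/y^2)$), then writes $\sigma(y)=H(y)^{-1}\int_{-R}^y H\tau$ and proves a concrete max-modulus lemma --- by parametrizing the straight segment from the apex $-R$ to $y$ and showing $|H\tau|$ increases along it --- to get $|\sigma(y)|\le |y+R|\,|\tau(y)| = O(1/y)$, hence $\beta=O(1/y)$. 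You instead integrate by parts once to peel off the principal term $-\mu(y)\,b(y)/a(y)$ and then invoke an iterated Laplace-type estimate for the remainder $\int\mu\,(b/a)'$; this is morally the same observation but stated more sharply, and it directly yields $\beta = b/a + O(1/y^2)$ and hence $\beta'=O(1/y^2)$ from the equation (a bound the paper asserts in the statement but does not explicitly re-derive from its cruder $\sigma=O(1/y)$ bound). If you want to match the paper's level of rigor on the remainder estimate, replacing "iterating a Laplace-type estimate" with the paper's monotonicity-along-the-segment argument (applied to $\mu\,(b/a)'$) would close that step cleanly; note also that with $y_0$ chosen at the apex $-R$, the constant of integration vanishes, which is tidier than arguing it is negligible.
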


\begin{proof}
We start by proving \eqref{logxchange}:
\begin{eqnarray*}
\log\left(\frac{x_1}{x}\right) &=& \log\left(\frac{x - 1 + O(1/x, 1/y)}{x}\right)\\
&=& \log\left(1 - \frac{1}{x} + O\left(\frac{1}{x^2}, \frac{1}{xy}\right)\right)\\
&=& - \frac{1}{x} + O\left(\frac{1}{x^2}, \frac{1}{xy}\right)
\end{eqnarray*}

Now, we prove Equation \eqref{logychange}:
\begin{eqnarray*}
\log\left(\frac{y_1}{y}\right) &=& \log\left(\frac{y + 1/x+ O(1/x^2, 1/xy)}{y}\right)\\
&=& \log\left(1 + \frac{1}{xy} + O\left(\frac{1}{x^2y}, \frac{1}{xy^2}\right)\right)\\
&=& \frac{1}{xy} + O\left(\frac{1}{x^2y}, \frac{1}{xy^2}\right)
\end{eqnarray*}

Finally we prove \eqref{alphachange}. This is probably the hardest part of the proof, so we give each step very carefully. 

First we want to compute $\alpha(x_1, y_1) - \alpha(x,y)$, where $\alpha$ is defined as in the proposition. 
\begin{align*}
\alpha(x_1, y_1) - \alpha(x,y) &= x_1\beta(y_1) - x\beta(y) \\
&= x_1\beta(y_1) - x_1\beta(y) + x_1\beta(y) - x\beta(y)\\
&= x_1[\beta(y_1) - \beta(y)] + \beta(y)[x_1 - x]
\end{align*}

Recall that $\displaystyle{y_1 - y = \frac{1}{x}[1+h\left(\frac{1}{y}\right)] + O(1/x^2)}$ and we will prove later that $\beta'(y) = O(1/y^2)$. We first compute $\beta(y_1) - \beta(y)$:
\begin{align*}
\beta(y_1) - \beta(y) =& (y_1 - y)\beta'(y) + O\left(\beta''(y)|y_1 - y|^2\right)\\
=& \beta'(y) \left[\frac{1}{x}\left(1+h\left(\frac{1}{y}\right)\right) + O\left(\frac{1}{x^2}\right)\right] + O\left(\frac{1}{x^2}\right)\\
=& \frac{1}{x}\left(1+h\left(\frac{1}{y}\right)\right)\beta'(y) + O\left(\frac{1}{x^2}\right) 
\end{align*}
so, putting it back we get:
\begin{align*}
\alpha(x_1, y_1) - \alpha(x,y) =& x_1\left[\frac{1}{x}\left(1+h\left(\frac{1}{y}\right)\right)\beta'(y) + O\left(\frac{1}{x^2y^2}\right)\right] \\
&+\beta(y)\left[-1+g\left(\frac{1}{y}\right) + O\left(\frac{1}{x}\right)\right]\\
=& (x+O(1))\left[\frac{1}{x}\left(1+h\left(\frac{1}{y}\right)\right)\beta'(y)\right]\\
& - \left(1 - g\left(\frac{1}{y}\right)\right) \beta(y) + O\left(\frac{1}{xy}\right),
\end{align*}
since  $\beta(y) = O\left(1/y\right)$. So, we see:
\begin{eqnarray*}
\alpha(x_1,y_1) - \alpha(x,y) = \left(1+h\left(\frac{1}{y}\right)\right)\beta'(y) - \left(1- g\left(\frac{1}{y}\right)\right)\beta(y) +  O\left(\frac{1}{x^2},\frac{1}{xy}\right)
\end{eqnarray*}
From \eqref{betachange} we then get:
\begin{eqnarray*}
\alpha(x_1,y_1) - \alpha(x,y) = -g\left(\frac{1}{y}\right) +  O\left(\frac{1}{x^2},\frac{1}{xy}\right)
\end{eqnarray*} and Equation \eqref{alphachange} is proved.
It remains to prove is the estimates on the solution of the differential equation \eqref{betachange}.

Recall that $g$ and $h$ are holomorphic equations defined in a neighborhood of the origin and $g(z) = O(z)$ and also $h(z) = O(z)$. Choosing $R$ large enough we can guarantee that $|h(1/y)| < 1/4$ for all $y \in U_\R$, and therefore we can divide by $1+h(1/y)$ to obtain:
\begin{align*}
\beta'(y) - \frac{1 - g(1/y)}{1+h(1/y)}\beta(y) =  - \frac{g(1/y)}{1+h(1/y)},
\end{align*}
renaming for convenience we have the following differential equation
\begin{align*}
\beta'(y) - \left(1 + s\left(y\right)\right)\beta(y) =  t\left(y\right),
\end{align*}
where $s(z) = O(1/z)$ and similarly $t(z) = O(1/z)$.

If we call $\sigma(y) = \beta(y) + t(y)$, the differential equation will become
\begin{align*}
\sigma'(y) - \left(1 + s\left(y\right)\right)\sigma(y) =  \tau\left(y\right)
\end{align*}
where $s(z) = O(1/z)$ and similarly $\tau(z) = O(1/z^2)$. We will prove that the solution of the equation $\sigma(y) = O(1/y)$, and therefore we will have $\beta(y) = O(1/y)$.

By increasing $R$ if necessary we have all the estimates for $s, t$ and $\tau$ in an open neighborhood of $\overline{U_R}$. Using differential equations theory, we see that one solution of this differential equation is
\begin{align*}
\sigma(y) = \frac{\int_{-R}^y H(\zeta)\tau(\zeta) d\zeta}{H(y)},
\end{align*}
where the integral is in any path joining the points $-R$ and $y$ in $U_R$ and:
$$
\frac{H'(y)}{H(y)} = - 1  - s(y).
$$
Notice that all the integrals are independent of the path chosen because we are working in $U_R$, which is a simply connected region. We prove the following
$$
\max_{\zeta \in \gamma}|H(\zeta)\tau(\zeta)| = |H(y)\tau(y)|
$$

If $\tau \equiv 0$ there is nothing to prove. Assume then $\tau(y) = \sum_{n \geq k} a_n/y^n$, where $k$ is the lowest index such that $a_k \neq 0$. Then
$
\tau(y) = \frac{a_k}{y^k}(1+ O(1/y))
$
by increasing $R$ we can make $|O(1/y)| < 1/2$. So we have $\tau(y) \neq 0$ for all $y \in U_R$. We can choose a branch of logarithm for $\tau(y)$. Let $M(y) = \log (\tau(y))$ or equivalently $\tau(y) = \exp(M(y))$. We have
\begin{align*}
M(y) &= \log \left(\frac{a_k}{y^k} \left(1+O\left(\frac{1}{y}\right)\right)\right)
= \log \left(\frac{a_k}{y^k}\right) + \log\left(1+ O\left(\frac{1}{y}\right)\right)\\
&= \log a_k - k\log y + O\left(\frac{1}{y}\right).
\end{align*}
From the definition of $H$, we have:
$$\log (H(y)) = L(y) = - y  - b_1 \ln y - \sum_{i \geq 2}\frac{b_i}{(i-1)y^{i-1}},$$ where $L'(y) = -1 - s(y)$ and $s(y) =  \sum_{i \geq 1}\frac{b_i}{y^i}$. 

We call $u(t) = |\exp(M(\zeta(t))+ L(\zeta(t))|$, where $\zeta(t) = (1-t)(-R) + ty$ is a parametrization of the line that joins the points $-R$ and $y$. We rewrite what we want to prove as $\displaystyle{\max_{0 \leq t\leq 1}u(t) = u(1)}.$
We are gonna prove $u'(t) \geq 0$ for $0 \leq t \leq 1$, if we choose $y$ large enough.

We have
\begin{align*}
u(t) = |\exp(M(\zeta(t)) + L(\zeta(t))| =\exp \Re \left[M(\zeta(t)) + L(\zeta(t))\right] 
\end{align*}
Therefore
$u'(t) = u(t) \Re [(M(\zeta) + L(\zeta))'|_{\zeta = \zeta(t)}(y+R)]$
and
\begin{align*}
[M(\zeta)+ L(\zeta)]' &= M'(\zeta) + L'(\zeta)\\
&= [-\frac{k}{\zeta} + O\left(\frac{1}{\zeta^2}\right)]+ [-1 + O(1/\zeta)]\\
&= -1 + O(1/\zeta)
\end{align*}
Then $u'(t) = u(t)\Re[(y+R)(-1 + O(1/\zeta(t))]$
By considering $y$ large enough we can guarantee $\Re[(y+R)(-1+O(\frac{1}{R+tR+ty}))]$ is positive.
So, we have $u'(t) \geq 0$, then $\max_{\zeta \in \gamma}|H(\zeta)\tau(\zeta)| = |H(y)\tau(y)|$. Putting back on the solution $\sigma$ we will have $|\sigma(y) | 
\leq |\tau(y)| O(y)= O(1/y)$. Therefore $\beta(y) = O(1/y)$.

\end{proof}

We use these functions to define the Abel-Fatou coordinate. In the $(x,y)$ coordinates, the first coordinate map looks as follows:
\begin{eqnarray*}
x_1 &=& x - 1 +g\left(\frac{1}{y}\right) + \frac{c}{x} + O\left(\frac{1}{x^2}, \frac{1}{xy}\right).
\end{eqnarray*}
By using the last lemma and the auxiliary functions we found we will be able to prove the existence of $\mu$ such that $\mu(x_1,y_1) = \mu(x,y) - 1$. 

\begin{theorem}\label{changemu}Choose and fix a branch of the logarithm on $U_R$.
Define:
$$
\mu_n(x,y) = x_n + n + r\log(x_n) +  s\log(y_n) + \alpha(x_n,y_n),
$$
with $r$ and $s$ constants well chosen. Then $\mu_n(x,y)$ is Cauchy  with the uniform norm topology and therefore we define the limit function as $\mu(x,y)$.
\end{theorem}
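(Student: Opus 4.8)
The plan is to show that $(\mu_n)$ converges uniformly on $D_{(x,y)}$ by proving that $\sum_n\|\mu_{n+1}-\mu_n\|$ converges. Write $(x_k,y_k):=\tilde{F}^k(x,y)$ for $(x,y)\in D_{(x,y)}$, so that $x_0=x$, $y_0=y$. Since $\tilde{F}^n(\tilde{F}(x,y))=\tilde{F}^{n+1}(x,y)$, the identity $\mu_n(\tilde{F}(x,y))=\mu_{n+1}(x,y)-1$ holds, so it suffices to make $\mu_{n+1}-\mu_n$ summably small; letting $n\to\infty$ in this identity afterwards gives the Abel-Fatou relation $\mu\circ\tilde{F}=\mu-1$, which is the point of the construction (though not part of the present statement). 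Writing out the definition and evaluating at $(x_n,y_n)$,
\begin{align*}
\mu_{n+1}(x,y)-\mu_n(x,y)=(x_{n+1}-x_n+1) &+ r\,(\log x_{n+1}-\log x_n) + s\,(\log y_{n+1}-\log y_n)\\
&+ \alpha(x_{n+1},y_{n+1})-\alpha(x_n,y_n),
\end{align*}
and each term is controlled by the expansion of $\tilde{F}$ recalled just above together with parts (a), (b), (c) of the preceding Lemma, namely equations \eqref{logxchange}, \eqref{logychange} and \eqref{alphachange}, all applied at $(x_n,y_n)$.

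The heart of the matter is the exact cancellation of the contributions that are too large to be summed. Along any trajectory in $D_{(x,y)}$, Proposition~\ref{prop:degenerateopenregion} (transported through $\phi$) gives $|x_n|$ of order $n$ and $|y_n|$ of order $\log n$; hence a term of order $1/y_n\sim 1/\log n$, of order $1/x_n\sim 1/n$, or of order $1/(x_ny_n)\sim 1/(n\log n)$ produces a divergent series, whereas $1/x_n^2$ and $1/(x_ny_n^2)$ are summable. Now the bracket $x_{n+1}-x_n+1$ contains the large piece $g(1/y_n)$, and this is cancelled \emph{identically} by the $-g(1/y_n)$ produced by \eqref{alphachange}; this is exactly the purpose of the function $\alpha$ built from the solution $\beta$ of the differential equation \eqref{betachange}. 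The next-largest piece of $x_{n+1}-x_n+1$ is $c/x_n$, and it is cancelled by $r\,(\log x_{n+1}-\log x_n)=-r/x_n+O(1/x_n^2,1/x_ny_n)$ upon setting $r=c$. What then remains are only contributions of the single order $1/(x_ny_n)$: from the mixed part $\tfrac1{x_n}e(1/y_n)$ of the $x$-coordinate map (the monomials $z^au^b$ with $a=1$, $b\ge 1$, which one must split off from the displayed $O(1/x^2,1/xy)$), from the leading $1/(xy)$ parts of the error terms in \eqref{logxchange} and \eqref{alphachange}, and from $s\,(\log y_{n+1}-\log y_n)=s/(x_ny_n)+O(1/x_n^2y_n,1/x_ny_n^2)$; we choose the constant $s$ so that the sum of their coefficients vanishes. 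With $r$ and $s$ fixed in this way,
\begin{align*}
\mu_{n+1}(x,y)-\mu_n(x,y)=O\!\left(\frac{1}{x_n^2}\right)+O\!\left(\frac{1}{x_ny_n^2}\right).
\end{align*}

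It remains to make this bound uniform on $D_{(x,y)}$. Since every $x_n,y_n\in U_R$ has argument within $\pi/8$ of $\pi$, we have $\Re x_n<0$ and $\Re y_n<0$. From $x_{n+1}=x_n+(-1+O(1/R))$ with $\Re(-1+O(1/R))<-1/2$ for $R$ large, one gets $\Re x_n\le\Re x-n/2$, hence $|x_n|\ge n/2$ on all of $D_{(x,y)}$; and from $y_{n+1}-y_n=\tfrac1{x_n}(1+h(1/y_n))+O(1/x_n^2)$, whose real part is negative of absolute value at least $c/|x_n|$ for some $c>0$, one gets $|y_n|\ge R$ always, and $|y_n|\ge c'\log n$ once $n$ is large relative to the starting point. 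A short multi-regime analysis --- separating the initial stretch, during which $|y_n|$ stays comparable to $|y|\ge R$ while $|x_n|$ is already of order $n$, from the later stretch, where $|y_n|\ge c'\log n$ --- shows that $\sum_n\|\mu_{n+1}-\mu_n\|$ converges, with a bound independent of $(x,y)\in D_{(x,y)}$. Consequently $(\mu_n)$ is uniformly Cauchy, the limit $\mu:=\lim_n\mu_n$ exists, and $\mu$ is holomorphic on $D_{(x,y)}$ as a uniform limit of holomorphic maps.

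The main obstacle is the bookkeeping in the second paragraph: the pieces of order $1/\log n$ and $1/(n\log n)$ are genuinely non-summable, so they must cancel identically rather than merely be made small. This is what forces the construction of $\alpha$ via \eqref{betachange} and pins down the values of $r$ and $s$, and it requires expanding $\tilde{F}$ slightly more finely than displayed --- isolating the pure-$1/y$ part $g$, the term $c/x$, and the mixed part $\tfrac1x e(1/y)$ --- so as to track exactly which coefficients enter at each non-summable order. A secondary but real difficulty is upgrading the growth estimates for $|x_n|$ and $|y_n|$ to bounds uniform up to $\partial D_{(x,y)}$, which is what the uniform-norm statement (as opposed to mere locally uniform convergence) requires.
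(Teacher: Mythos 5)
Your proof follows essentially the same route as the paper's: compute $\mu_{n+1}-\mu_n$ from the three parts of the preceding lemma, observe the identical cancellation of $g(1/y_n)$ against the $\alpha$-increment, set $r=c$ to kill the $c/x_n$ piece, choose $s$ to cancel the residual $1/(x_ny_n)$ coefficient, and then sum the remaining $O(1/x_n^2,\,1/(x_n y_n^2))$ using $|x_n|\sim n$, $|y_n|\sim\log n$. Your remarks on isolating the mixed $z^a u^b$ ($a=1$, $b\ge1$) coefficients and on the need for a trajectory-uniform lower bound on $|x_n|,|y_n|$ are both implicit in the paper's terse ``sum of all the terms of this form'' and ``converges absolutely uniformly,'' so the two arguments match in substance.
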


\begin{proof}
We want to show that $\mu_n(x,y)$ is Cauchy and therefore converges uniformly in $U_R \times U_R$.
Using the lemma above, we have:
\begin{align*}
\mu_{n+1} - \mu_n  = &\left( x_{n+1} + n+1 + r\log x_{n+1} +  s\log y_{n+1} + \alpha(x_{n+1},y_{n+1})\right) - \\
& - (x_n + n + r\log x_n +  s\log y_n + \alpha(x_n,y_n)) \\
= &(x_{n+1} - x_n) + 1 + r(\log x_{n+1} - \log x_n) + \\
& + s(\log y_{n+1} - \log y_n) + (\alpha(x_{n+1},y_{n+1}) - \alpha(x_{n},y_{n})) \\
= & \left( - 1 + g\left(\frac{1}{y_n}\right) + 1 + \frac{c}{x_n} \right) - \frac{r}{x_n} +\\
& + s\left( \frac{1}{x_ny_n}\right) - g\left(\frac{1}{y_n}\right) + \frac{k}{x_ny_n} + O\left(\frac{1}{x_n^2},\frac{1}{x_ny_n^2}\right)
\end{align*}
We choose $r = c$ and $s = -k$, where the term $k/x_ny_n$ is the sum of all the terms of this form in the other factors. Therefore:
\begin{align*}
|\mu_{n+1} - \mu_n| = \left|O\left(\frac{1}{x_n^2}, \frac{1}{x_ny^2_n}\right)\right|
\end{align*}
Using the fact that $|x_n| \sim n$ and $|y_n| \sim \log n$ we can see that these terms add up and converge. 
Therefore:
\begin{equation}
\mu_n - \mu_0 = \sum_{i=0}^{n}\left(\mu_{i+1} - \mu_i\right)
\end{equation}
converges absolutely uniformly on $D_{(x,y)}$ to a holomorphic limit $\mu$. 
Let $\mu = \lim \mu_n = \mu_0 + \eta$. We have:
\begin{align*}
|\eta| = |\sum_n (\mu_{n+1} - \mu_n) | \leq \sum_n \left|O\left(\frac{1}{n^2}, \frac{1}{n\log^2n}\right)\right| = O\left(\frac{1}{n},\frac{1}{\log n}\right)
\end{align*}
Our estimates above show that $\eta \to 0$ uniformly in $D_{(x,y)}$ as $(x,y) \to \infty$.
\end{proof}

We summarize what we have found in the following proposition:
\begin{proposition}
There exists a map $\psi : D_{(z,w)} \to \CC$ such that:
\begin{equation*}
\psi(F(z,w)) = \psi(z,w) - 1
\end{equation*}
for all $(z,w) \in D_{(z,w)}$.
We have:
\begin{align}\label{zwchangemu}
\nonumber\psi(z,w) &= \mu(x,y) = \mu_0(x,y) + \eta(x,y) \\
&=x  + r \log(x) + s\log(y) +\alpha(x,y) + \eta(x,y)
\end{align}
\end{proposition}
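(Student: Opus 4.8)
The plan is to obtain $\psi$ by combining Theorem~\ref{changemu} with the biholomorphism $\phi$ of \eqref{zwchangetoxy}, and then to read off the displayed formula. Theorem~\ref{changemu} already hands us a holomorphic function $\mu = \lim_{n\to\infty}\mu_n$ on $D_{(x,y)}$, the convergence being uniform, where $\mu_n(x,y) = x_n + n + r\log(x_n) + s\log(y_n) + \alpha(x_n,y_n)$ with the branch of $\log$ fixed on $U_R$. Two things remain: first, to check that $\mu$ satisfies the Abel--Fatou relation $\mu\circ\tilde{F} = \mu - 1$ on $D_{(x,y)}$, i.e.\ \eqref{fatoucoordxy}; second, to verify that pulling back by $\phi$ produces a well-defined $\psi$ on $D_{(z,w)}$ satisfying \eqref{fatoucoordzw} and having the stated form.

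For the functional equation for $\mu$, I would use the elementary shift identity for the iterates: writing $\tilde{F}(x,y)=(x_1,y_1)$, the $n$-th iterate of $\tilde{F}$ at $(x_1,y_1)$ is the $(n+1)$-st iterate at $(x,y)$, so
\[
\mu_n(\tilde{F}(x,y)) = x_{n+1} + n + r\log(x_{n+1}) + s\log(y_{n+1}) + \alpha(x_{n+1},y_{n+1}) = \mu_{n+1}(x,y) - 1
\]
for every $n$ and every $(x,y)\in D_{(x,y)}$; here one uses $\tilde{F}(D_{(x,y)})\subset D_{(x,y)}$, which is just Proposition~\ref{prop:degenerateopenregion} rewritten in the $(x,y)$ coordinates, so that every $x_n,y_n$ lies in $U_R$ and the chosen branch of the logarithm is legitimate. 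Letting $n\to\infty$ and invoking the uniform convergence $\mu_n\to\mu$ then gives $\mu(\tilde{F}(x,y)) = \mu(x,y) - 1$.

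Next I would transport this to $D_{(z,w)}$. On $D_{(z,w)}$ one has $z\in V_\epsilon$, hence $z\neq 0$, and $w-\gamma(z)\in V_\epsilon$, hence $w-\gamma(z)\neq 0$ because $\mathcal{C}\cap D_{(z,w)}=\emptyset$; thus $\phi(z,w)=(1/z,\,1/(w-\gamma(z)))$ is a genuine biholomorphism of $D_{(z,w)}$ onto $D_{(x,y)}$ and the square after the previous proposition commutes, $\phi\circ F = \tilde{F}\circ\phi$. Setting $\psi := \mu\circ\phi$, we get $\psi(F(z,w)) = \mu(\phi(F(z,w))) = \mu(\tilde{F}(\phi(z,w))) = \mu(\phi(z,w)) - 1 = \psi(z,w) - 1$. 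Finally, since $x_0=x$ and $y_0=y$, the base term is $\mu_0(x,y) = x + r\log(x) + s\log(y) + \alpha(x,y)$, and putting $\eta := \mu - \mu_0 = \sum_{n\ge 0}(\mu_{n+1}-\mu_n)$ --- the telescoping series shown in Theorem~\ref{changemu} to converge absolutely and uniformly, with $\eta\to 0$ as $(x,y)\to\infty$ --- gives \eqref{zwchangemu} after the substitution $x=1/z$, $y=1/(w-\gamma(z))$.

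The genuinely hard part of all this was already done in Theorem~\ref{changemu}: the estimate $|\mu_{n+1}-\mu_n| = |O(1/x_n^2,\,1/(x_n y_n^2))|$ combined with $|x_n|\sim n$ and $|y_n|\sim\log n$, which makes the series converge, and which in turn rests on solving the differential equation \eqref{betachange} with $\beta(y)=O(1/y)$. What is left here is bookkeeping: checking that $\phi$ is a biholomorphism onto $D_{(x,y)}$ (which needs $w\neq\gamma(z)$ on $D_{(z,w)}$) and that the functional equation survives passage to the uniform limit. I expect no obstacle beyond keeping the branch of the logarithm used in $\mu_n$ and in the identities \eqref{logxchange}--\eqref{alphachange} consistent throughout, which is guaranteed by working on the simply connected $U_R\times U_R$.
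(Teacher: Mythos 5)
Your proof is correct and follows essentially the same route as the paper: define $\psi=\mu\circ\phi$, establish the shift identity $\mu_n(\tilde F(x,y))=\mu_{n+1}(x,y)-1$, and pass to the uniform limit from Theorem~\ref{changemu}. If anything, your write-up is slightly cleaner than the paper's, which carries an extraneous $\eta(x_{n+1},y_{n+1})$ inside the $\mu_n$ limit expressions; you also make explicit the point (implicit in the paper) that $\phi$ is a genuine biholomorphism of $D_{(z,w)}$ onto $D_{(x,y)}$ because $\mathcal{C}\cap D_{(z,w)}=\emptyset$.
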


\begin{proof}
Let $\displaystyle{\mu = \lim \mu_n = \mu_0 + \eta = x + r\log(x) + s\log(y) +\alpha(x,y) + \eta}$; and $\psi(z,w) = \mu(x,y) = \mu(\phi(z,w))$. Then:
\begin{eqnarray*}
&&\psi \circ F(z,w) = \lim_{n \to \infty} \left[ x_{n+1} + n + r \log x_{n+1} + s\log y_{n+1} + \alpha(x_{n+1},y_{n+1}) + \eta(x_{n+1},y_{n+1}) \right]\\
& = &  \lim_{n \to \infty} \left[ x_{n+1} + n + 1 + r\log x_{n+1} + s\log y_{n+1} + \alpha(x_{n+1},y_{n+1}) + \eta(x_{n+1},y_{n+1}) \right] -  1\\
& = & \psi(z,w) - 1,
\end{eqnarray*}
and so \eqref{fatoucoordzw} is satisfied.
\end{proof}

Consider the mapping from $D_{(z,w)}$ to $\CC^2$ given by:
$$
\Theta(z,w) = \left(\psi(z,w), \frac{1}{w-\gamma(z)}\right) = : (t,y)
$$
In the next proposition we prove $\Theta$ is a change of coordinates.

\begin{proposition}
$\Theta = (t,y)$ is a biholomorphism from $D_{(z,w)}$ onto its image $D_{(t,y)}$.
\end{proposition}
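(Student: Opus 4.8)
The plan is to transport the statement to the $(x,y)$ coordinates of \eqref{zwchangetoxy} and then reduce injectivity to a one--variable univalence statement on the $y$--slices. First, $\Theta$ is holomorphic on $D_{(z,w)}$: the Abel--Fatou coordinate $\psi$ is holomorphic, being the locally uniform limit of holomorphic functions produced in Theorem~\ref{changemu}, and $1/(w-\gamma(z))$ is holomorphic on $D_{(z,w)}$ because $w-\gamma(z)$ has no zeros there (recall $\mathcal C\cap D_{(z,w)}=\emptyset$). Write $\phi$ for the biholomorphism of \eqref{zwchangetoxy} and factor $\Theta=\Theta_0\circ\phi$, where $\Theta_0(x,y):=(\mu(x,y),y)$ is defined on $D_{(x,y)}=\{(x,y):x,y\in U_R,\ |y|<|x|\}$. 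Since $\phi$ is already a biholomorphism and since a holomorphic map with nonvanishing Jacobian that is globally injective is automatically a biholomorphism onto its (open) image, it suffices to show (a) $\det D\Theta_0$ never vanishes and (b) $\Theta_0$ is injective.

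For (a), use the expression $\mu(x,y)=x+r\log x+s\log y+\alpha(x,y)+\eta(x,y)$ from \eqref{zwchangemu}, with $\alpha(x,y)=x\beta(y)$ and $\beta(y)=O(1/y)$, so that
\[
\partial_x\mu=1+\frac{r}{x}+\beta(y)+\partial_x\eta .
\]
On $D_{(x,y)}$ the first two correction terms are $O(1/R)$, using $|x|,|y|>R$ and the bound on $\beta$; for $\partial_x\eta$ one differentiates the defining series $\eta=\sum_n(\mu_{n+1}-\mu_n)$ of Theorem~\ref{changemu} term by term, the $n$--th summand being $O(1/x_n^2,\,1/(x_ny_n^2))$, and the growth rates $|x_n|\sim n$, $|y_n|\sim\log n$ of Proposition~\ref{prop:degenerateopenregion} force this differentiated series to converge with sum $O(1/R)$. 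Hence, after shrinking $\e$ once more, $|\partial_x\mu-1|<\tfrac12$ everywhere on $D_{(x,y)}$; in particular $\det D\Theta_0=\partial_x\mu\neq0$, so $\Theta_0$ is a local biholomorphism at every point, in particular an open map.

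For (b), if $\Theta_0(x,y)=\Theta_0(x',y')$ then the second coordinates give $y=y'$, so one must show that $x\mapsto\mu(x,y)$ is injective on the slice $D_y=\{x\in U_R:\ |x|>|y|\}$. This slice is not convex, but the inversion $\zeta=-1/x$ carries it bijectively onto $W_y=\{\zeta:\ 0<|\zeta|<1/|y|,\ |\Arg\zeta|<\pi/8\}$, which \emph{is} convex, being the intersection of a round disc with an angular sector of opening $\pi/4<\pi$. Put $G(\zeta)=\mu(-1/\zeta,y)$, so that $G'(\zeta)=\zeta^{-2}\,\partial_x\mu(-1/\zeta,y)$. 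On $W_y$ one has $|\Arg(\zeta^{-2})|<\pi/4$, while $|\Arg\partial_x\mu|<\arcsin\tfrac12<\pi/6$ by (a); hence $|\Arg G'(\zeta)|<\tfrac{\pi}{4}+\tfrac{\pi}{6}<\tfrac{\pi}{2}$, i.e.\ $\operatorname{Re}G'(\zeta)>0$ throughout the convex set $W_y$. The Noshiro--Warschawski univalence criterion then gives that $G$ is injective on $W_y$, hence $x\mapsto\mu(x,y)$ is injective on $D_y$, hence $\Theta_0$ is injective on $D_{(x,y)}$. Composing with $\phi$ yields the proposition, with $D_{(t,y)}:=\Theta_0(D_{(x,y)})$.

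The only genuinely delicate step is the uniform estimate $|\partial_x\mu-1|<\tfrac12$ \emph{up to the boundary} of $D_{(x,y)}$: along the inner circle $\{|x|=|y|\}$ and along the two bounding rays of the sector, the slice $D_y$ is thin in the $x$--direction, and since $\eta$ is itself only of size $O(1/|x|)$ near that inner circle, a black--box Cauchy estimate on $\eta$ degenerates there. The way around it is to differentiate the series for $\eta$ termwise as above, where the decay $1/x_n\sim1/n$ is strong enough to absorb even a crude polynomial bound on the orbit derivatives $\partial_xx_n,\partial_xy_n$; alternatively one first extends $\mu$ to a strictly larger domain --- a wider parabolic petal of $z\mapsto z+z^2$ in each variable, legitimate by the argument of Proposition~\ref{tododomain} --- so that $D_{(x,y)}$ sits a definite distance inside the domain of $\mu$ and ordinary Cauchy estimates apply. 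This is precisely the point, emphasized in the introduction, where one needs a very precise estimate on the size of the change of variables, and it is the carefully tracked constants of the previous sections that make it go through.
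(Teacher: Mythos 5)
Your proof is correct, but it takes a genuinely different route from the paper, and the comparison is instructive. The paper proves injectivity directly: supposing $\mu(x_1,y)=\mu(x_2,y)$ with $x_1\neq x_2$, it divides the identity
\[
x_1-x_2 + r(\log x_1-\log x_2) + (\alpha(x_1,y)-\alpha(x_2,y)) + (\eta(x_1,y)-\eta(x_2,y)) = 0
\]
by $x_1-x_2$ and bounds each difference quotient separately: the $\log$-term via a mean-value integral along the segment, the $\alpha$-term exactly (it equals $\beta(y)=O(1/y)$ because $\alpha$ is linear in $x$), and the $\eta$-term by smallness of $\eta$ and a Cauchy-type estimate. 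You instead pass to the reciprocal variable $\zeta=-1/x$, observe that each $y$-slice becomes a convex truncated sector, and invoke the Noshiro--Warschawski criterion once you have $|\partial_x\mu-1|<\tfrac12$. This is cleaner conceptually: it packages the whole argument into a single derivative estimate, whereas the paper handles three separate difference quotients. Both routes hinge on the very same hard kernel --- making $\partial_x\eta$ (or its difference quotient) small uniformly on $D_{(x,y)}$ --- and you are right to flag this as the delicate point; the paper's sentence ``$\eta$ is bounded, so by increasing $R$\ldots'' glosses over exactly the boundary degeneracy you describe, and your proposed fix (extend $\mu$ to the larger domain $U_{1/\delta}\times U_{1/\delta}$ of Proposition~\ref{tododomain}, which drops the constraint $|y|<|x|$, and then apply a clean Cauchy estimate) is the honest way to close it.

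Two minor quibbles. First, $\arcsin\tfrac12=\pi/6$ exactly, not $<\pi/6$; what you actually have is $|\Arg\,\partial_x\mu|<\pi/6$ from the \emph{strict} bound $|\partial_x\mu-1|<\tfrac12$, which is what the Noshiro--Warschawski step needs. Second, your phrase ``a crude polynomial bound on the orbit derivatives $\partial_x x_n,\partial_x y_n$'' is too cavalier: a bound of order $n^2$ would not make the differentiated series converge against $1/x_n^3\sim 1/n^3$. The true bounds coming from the variational recursion (essentially $\partial_x x_n=O(n/\log^2 n)$, $\partial_x y_n=O(1)$, obtained by solving the linearized recursion whose off-diagonal term $O(1/y_n^2)=O(1/\log^2 n)$ is not summable) are sublinear and only just suffice; this is precisely why the domain-extension alternative you offer is the safer argument. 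With that caveat the proposal is sound.
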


\begin{proof}
We have to show that $(t,y)$ is injective in $D_{(z,w)}$, or equivalently in $D_{(x,y)}$. If
$$
(\mu(x_1,y_1), y_1) = (\mu(x_2, y_2), y_2)
$$
for $(x_1,y_1)$ and $(x_2,y_2) \in D_{(x,y)}$ then we have:
$$
y_1 = y_2
$$
and
$$
\mu(x_1, y) = \mu(x_2,y).
$$
Recalling how we define $\mu(x,y)$ then we should have:
\begin{eqnarray*}
x_1  + r\log x_1 + s\log y + \alpha(x_1,y) + \eta(x_1,y) = x_2 + r\log x_2 + s\log y + \alpha(x_2,y) + \eta(x_2,y)\\
x_1 - x_2 + r\left(\log x_1 - \log x_2 \right)  + \alpha(x_1,y) - \alpha(x_2,y) + \eta(x_1,y) - \eta(x_2,y) = 0
\end{eqnarray*}
In case $x_1 \neq x_2$ we can divide by $x_1 - x_2$:
\begin{eqnarray}
\label{injective}1 + r\frac{\log x_1  - \log x_2 }{ x_1 - x_2}  + \frac{\alpha(x_1,y) - \alpha(x_2,y)}{x_1 - x_2}  + \frac{\eta(x_1,y) - \eta(x_2,y)}{x_1 - x_2} = 0
\end{eqnarray}
We have that $\eta(x,y)$ is bounded, so by increasing $R$ we can assume $\displaystyle{\frac{\eta(x_1,y) - \eta(x_2,y)}{x_1 - x_2}}$ is less than $\epsilon$. Also:
$$
\left|\frac{\log(x_1) - \log(x_2)}{x_1 - x_2}\right| = \left|\frac{1}{\tilde{x}}\right|
$$
for some $\tilde{x} \in V_R$ therefore: $\displaystyle{\left|r\frac{\log(x_1) - \log(x_2)}{x_1 - x_2}\right| <  \frac{|r|}{R}}$.

And for the third term:
$$\displaystyle{\frac{\alpha(x_1,y) - \alpha(x_2,y)}{x_1 - x_2} = \beta(y)  = O\left(\frac{1}{y}\right)},$$ we have $|\beta(y)| < C/|y| < C/R$.

Therefore we can choose $R$ large enough such that:
\begin{eqnarray*}
1 > \frac{|r|}{R} + \frac{C}{R}+ \epsilon
\end{eqnarray*}

So our map is injective and we have a valid change of coordinates.
\end{proof}

In the following section we modify the open region $D_{(t,y)}$ for convenience, so it will be easier to visualize each fiber of $\Psi$.

\section{Modification of $D_{(z,w)}$}

In this section we investigate the geometry of the image $D_{(t,y)}$ of $D_{(z,w)}$ in the new coordinates $\Theta=(t,y)$.  

We want to prove:
\begin{lemma}
$D_{(t,y)} \supset D'_{(t,y)}$ where 
$$
D'_{(t,y)} = \{(t,y) \in U_{2R} \times U_{R} : |y| < \frac{|t|}{2}\}.
$$
\end{lemma}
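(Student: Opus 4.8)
The plan is to show that any point $(t,y)$ with $y \in U_R$ and $|y| < |t|/2$ (and $|t| > 2R$) actually lies in $D_{(t,y)} = \Theta(D_{(z,w)})$, i.e.\ that it has a preimage under $\Theta$ inside $D_{(z,w)}$, equivalently under $(\mu,\mathrm{id})$ inside $D_{(x,y)} = U_R \times U_R$. Since $\Theta$ preserves the $y$-coordinate and is built from $\phi$ (which also preserves $y$), it suffices to work fiberwise: fix $y \in U_R$ and show that for every prescribed value $t$ with $|t| > 2R$ and $|t| > 2|y|$ there is an $x \in U_R$ with $\mu(x,y) = t$. So the core of the argument is: \emph{for each fixed $y \in U_R$, the map $x \mapsto \mu(x,y)$, restricted to $U_R$, has image containing $\{\,t : |t| > 2R,\ |t| > 2|y|\,\}$.}

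First I would recall from \eqref{zwchangemu} that $\mu(x,y) = x + r\log x + s\log y + \alpha(x,y) + \eta(x,y)$, and that on $D_{(x,y)}$ we have the uniform bounds $|\alpha(x,y)| = |x\beta(y)| = O(|x|/|y|)$ is \emph{not} small, so more carefully I would use $\alpha(x,y) = x\beta(y)$ with $\beta(y) = O(1/y)$, hence $|\alpha(x,y)| \le C|x|/|y| \le C|x|/R$, together with $\eta(x,y) = O(1/|x|, 1/\log|y|) \to 0$ and $r\log x + s\log y = O(\log|x| + \log|y|)$. Thus $\mu(x,y) = x(1 + \beta(y)) + (\text{lower order})$, where $1 + \beta(y)$ is close to $1$ (bounded away from $0$) once $R$ is large. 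The idea is therefore a perturbation-of-the-identity / Rouch\'e argument: write $\mu(x,y) = x \cdot (1+\beta(y)) + E(x,y)$ with $E(x,y) = r\log x + s\log y + \eta(x,y)$ satisfying $|E(x,y)| = O(\log|x|)$, which is $o(|x|)$. To solve $\mu(x,y) = t$ for a given target $t$, set $x_0 = t/(1+\beta(y))$; then $|x_0|$ is comparable to $|t| > 2R$, so $x_0$ has large modulus, and since $\Arg(t) $ is near $\pi$ and $\beta(y)$ is tiny, $\Arg(x_0)$ is also within $\pi/8$ of $\pi$, i.e.\ $x_0 \in U_R$ (after possibly enlarging $R$, using $|t|>2R$). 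Now apply Rouch\'e on a small disc around $x_0$ (of radius, say, $|x_0|/10$, which stays inside $U_R$ by the angular slack): on the boundary of that disc, $|x(1+\beta(y)) - t| = |x - x_0|\,|1+\beta(y)| \gtrsim |x_0|/10$, while $|E(x,y)| = O(\log|x_0|) = o(|x_0|)$; hence $\mu(x,y) - t$ and $x(1+\beta(y)) - t$ have the same number of zeros in the disc, namely one. This produces the desired $x \in U_R$ with $\mu(x,y) = t$, so $(t,y) \in D_{(t,y)}$, which is exactly the claimed inclusion $D'_{(t,y)} \subset D_{(t,y)}$.

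I would also need to double-check the two constraints defining $D'_{(t,y)}$: the condition $|t| > 2R$ is what forces $|x_0| > R$ (indeed $|x_0| = |t|/|1+\beta(y)| > |t|/(1 + C/R) > R$ for $R$ large), and the condition $|y| < |t|/2$ is what guarantees $|y| < |x|$ for the solution $x$ we produce, so that $(x,y)$ genuinely lies in $D_{(x,y)} = \{(x,y) : x,y \in U_R,\ |y|<|x|\}$ and not merely in $U_R \times U_R$; here one uses $|x| \ge |x_0|(1 - 1/10) \ge \tfrac{9}{10}|t|/(1+C/R) > |y|$ once $R$ is large and $|y| < |t|/2$. The factor $2R$ (rather than $R$) and the factor $\tfrac12$ in $|y| < |t|/2$ are exactly the slack absorbing the error term $\beta(y)$ and the Rouch\'e radius. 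The main obstacle is the bookkeeping: keeping the angular estimates ($\Arg$ within $\pi/8$) consistent under the perturbation by $1+\beta(y)$ and under moving $x$ around the Rouch\'e disc, and verifying that $E(x,y) = O(\log|x|) = o(|x|)$ uniformly in $y \in U_R$ — which follows from the established estimates on $\alpha$ and $\eta$ but must be stated with care so that a single large $R$ works for all fibers simultaneously.
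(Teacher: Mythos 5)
Your fiberwise reduction---fix $y_o \in U_R$ and show that $\mu_{y_o} := \mu(\cdot,y_o)$ maps $D_{y_o} = \{x \in U_R : |x| > |y_o|\}$ onto a set containing $U_{2|y_o|}$---is precisely the reduction the paper makes, and your Rouch\'e argument is a sensible way to make precise the paper's one-line assertion that ``the image of $D_{y_o}$ by $\mu$ will not change the asymptotic behavior of $U_R$.'' Your radial bookkeeping is fine: $|x_0| \gtrsim |t| > 2R$ keeps the disc of radius $|x_0|/10$ well inside $\{|x|>R\}$, and $|y| < |t|/2$ guarantees $|y|<|x|$ on that disc.

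However, the angular point you defer as ``bookkeeping'' is where the argument as written actually breaks. The source sector $U_R$ and the target sector $U_{2R}$ have the \emph{same} angular half-width $\pi/8$, so there is no angular slack. Since $1+\beta(y)$ is in general non-real, $\Arg(1+\beta(y)) = O(1/R)$ is small but nonzero, and for $t \in U_{2R}$ with $\Arg t$ within $O(1/R)$ of $\pi \pm \pi/8$ your center $x_0 = t/(1+\beta(y))$ can already lie outside $U_R$; a disc of radius $|x_0|/10$ about $x_0$ certainly leaves $U_R$. Enlarging $R$ does not help, because the obstruction is angular, not radial: for any $R$ there are admissible $t$ within $O(1/R)$ of the boundary ray. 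So Rouch\'e cannot be applied there, and $D'_{(t,y)} \subset D_{(t,y)}$ is not established near the boundary rays. To be fair, the paper's own proof has the same soft spot: the displayed bound $|\mu_{y_o}(x) - (x + (r+s)\log x)| < |x|/10 + 1$ does not rule out a small rotation of the sector. The natural repair---which also fixes your proof---is to take a strictly narrower argument sector in the $t$-variable (say $|\Arg t - \pi| < \pi/10$, analogous to the $\pi/20$ the paper uses later in defining $T'$), which creates the angular slack needed for the Rouch\'e disc; the subsequent definition of $\Omega$ and everything downstream is unaffected by such a narrowing.
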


\begin{proof}
Fix $y_o \in U_{R}$, we call $\mu_{y_o}(x): = \mu(x,y_o)$. Define:
\begin{align*}
D_{y_o} = \{x \in \CC: (x,y_o) \in D_{(x,y)}\}
\end{align*}
By the definition of $D_{(x,y)}$ we have $D_{y_o}(x) = \{x \in U_R, |y_o|<|x|\}$. We want to prove $\mu_{y_o}(D_{y_o}) \supset U_{2|y_o|}$. We have
$\mu_{y_o}(x) = x +  r\log(x)+ s \log(y_o) + \alpha(x,y_o) + \eta(x,y_o)$.
Notice that $\log(y) = \log(|y_o|e^{i\theta})= \log|y_o| + i\theta$ where $\theta \in \left[\pi - \pi/8, \pi + \pi/8\right]$, which is basically just a translation along the real axis, since $\log|y_o| \gg \pi$ and since $x \in D_{y_o}$ we have $\log|y_o| < \log|x|$. Also $\eta(x,y_o)$ is bounded, therefore we can assume $|\eta(x,y_o)| < 1$.  The only term we need to estimate is $\alpha(x,y_o)$. 
We can choose $|\alpha(x,y)| < |x|/10$, since $\alpha(x,y) = x\beta(y)$ and $\beta(y) = O(1/y^2)$. Putting all together we have:
\begin{eqnarray*}
|\mu_{y_o}(x) - (x+(r+s)\log(x))| < \frac{|x|}{10} +1
\end{eqnarray*}
We will have that the image of $D_{y_o} = \{x \in U_R: |y_o|<|x|\}$ by $\mu$ will not change the asymptotic behavior of $U_R$ and  $\mu_{y_o}(D_{y_o}) \supset U_{2|y_o|}$.
This proves that $D_{(t,y)}$ contains $D'_{(t,y)} = \{(t,y) \in U_{2R} \times U_{R} : |t| > 2|y| \}$.
\end{proof}

It follows from the asymptotic behavior of $F$ in the coordinates $(t,y)$ that the region $D_{(t,y)}$ will be mapped into the region $D'_{(t,y)}$ under a finite number of iterates of F. So, we have $F^{k}(D_{(z,w)}) \subset D'_{(z,w)}$ for some $k$ big enough. Therefore, if we define $\Omega$ as follows:
\begin{equation}\label{defomega}
\Omega = \bigcup_{n\geq0}F^{-n}(D'_{(z,w)})
\end{equation}
then we also have:
\begin{equation*}
\Omega = \bigcup_{n\geq0}F^{-n}(D_{(z,w)})
\end{equation*}

From now on we work in $D'_{(z,w)}$, $D'_{(x,y)}$ and $D'_{(t,y)}$, where:
\begin{eqnarray*}
D'_{(t,y)} &=& (t,y)(D'_{(z,w)}) = \Theta(D'_{(z,w)})\\
D'_{(x,y)} &=& (x,y)(D'_{(z,w)}) = \phi(D'_{(z,w)}) \\
D'_{(t,y)} &=& \{(t,y) \in U_{2R} \times U_{R} : |y| < |t|/2 \} 
\end{eqnarray*}

All the open regions are displayed in the following diagram, where each vertical arrow is a biholomorphic change of coordinates and $\tilde{F}$ and $\tilde{\tilde{F}}$ is defined so the diagram commutes.

\[ 
\begin{CD} 
D'_{(z,w)} @>\subset>> D_{(z,w)} @>F>> D_{(z,w)} \\ 
@V\phi VV  @V\phi VV  @V\phi VV   \\ 
D'_{(x,y)} @>\subset>> D_{(x,y)} @>\tilde{F}>> D_{(x,y)} \\
@V(\mu,id) VV  @V(\mu,id) VV @V(\mu,id) VV\\ 
D'_{(t,y)} @>\subset>> D_{(t,y)} @> \tilde{\tilde{F}}>> D_{(t,y)} \\
\end{CD} \] 

We are now ready to prove that the invariant curve attracted to the origin along the nondegenerate characteristic direction is contained in the boundary of the invariant region attracted to the origin along the degenerate characteristic direction.

\begin{proposition}\label{gammaintheboundary}
For $\Gamma$ defined as $\Gamma = \cup F^{-n}(\mathcal{C})$, we have:
\begin{equation}
\Gamma \subset \partial \Omega
\end{equation}
\end{proposition}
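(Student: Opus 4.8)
## Proof proposal for $\Gamma \subset \partial\Omega$

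The plan is to show two things: first, that $\Gamma \cap \Omega = \emptyset$, so that $\Gamma$ is disjoint from the open basin; and second, that every point of $\Gamma$ is a limit of points of $\Omega$, so that $\Gamma \subset \overline{\Omega}$. Combining these gives $\Gamma \subset \partial\Omega$. Since both $\Gamma$ and $\Omega$ are $F$-invariant (indeed $F(\Gamma) = \Gamma$ and $F(\Omega) = \Omega$, the latter because $\Omega = \cup_n F^{-n}(D'_{(z,w)})$ and $F(D'_{(z,w)}) \subset D'_{(z,w)}$), it suffices to work near the origin and then pull back by iterates of $F$. More precisely, since $\mathcal{C} = \{(z,\gamma(z)) : z \in V_\epsilon\}$ generates $\Gamma$ under $F^{-n}$, and $\Omega$ is $F$-invariant, a point $p \in \Gamma$ lies in $\partial\Omega$ iff $F^N(p) \in \partial\Omega$ for $N$ large; so I only need to analyze points of $\mathcal{C}$ close to $O$.

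For disjointness, suppose $p = (z,\gamma(z)) \in \mathcal{C}$ with $z \in V_\delta$ small. If $p \in \Omega$, then $F^n(p) \in D'_{(z,w)} \subset D_{(z,w)}$ for some $n$; but $D_{(z,w)}$ is defined by $w - \gamma(z) \in V_\epsilon$, in particular $w \neq \gamma(z)$, whereas $\mathcal{C}$ is invariant so $F^n(p) = (z_n, \gamma(z_n))$ still has second coordinate exactly $\gamma(z_n)$, i.e. $w_n - \gamma(z_n) = 0 \notin V_\epsilon$. This contradiction shows $\mathcal{C} \cap \Omega = \emptyset$, and pulling back by $F$ gives $\Gamma \cap \Omega = \emptyset$.

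For the approximation, I use Corollary~\ref{partCbound}: there exist $\delta > 0$ and $N > 0$ with $F^N(\mathcal{C} \cap B_\delta(O)) \subset \partial D_{(z,w)}$. Given $p \in \mathcal{C} \cap B_\delta(O)$, write $q = F^N(p) \in \partial D_{(z,w)}$. The key point is that $D_{(z,w)}$ is an open set whose closure is approached from inside: writing $q = (z, \gamma(z))$ with $z \in \overline{V_\delta}$, the nearby points $(z, \gamma(z) + u)$ with $u \in V_\delta$ small and $|z| < |u|$ lie in the set $\{z \in V_\delta,\, w - \gamma(z) \in V_\delta\}$, and by Proposition~\ref{tododomain} applying $F^N$ to such points lands in $D_{(z,w)} \subset \Omega$. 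Letting $u \to 0$ these points converge to $q$, so $q \in \overline{\Omega}$; hence $p = F^{-N}(q) \in \overline{\Omega}$ as well, using continuity of $F^{-N}$ and $F$-invariance of $\overline{\Omega}$. Since every point of $\Gamma$ has some $F^N$-image in $\mathcal{C} \cap B_\delta(O)$ for $N$ large, we conclude $\Gamma \subset \overline{\Omega}$, and combined with the disjointness, $\Gamma \subset \partial\Omega$.

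The main obstacle I anticipate is making the approximation argument fully rigorous at boundary points of $V_\delta$: a point $q = (z,\gamma(z)) \in \partial D_{(z,w)}$ may lie on $\partial D_{(z,w)}$ either because $z \in \partial V_\delta$ (the ``side'' of the sector, where $\Arg z$ hits $\pi \pm \pi/8$) or because the inequality $|z| = |u|$ is tight, or because $u = 0$, i.e. $w = \gamma(z)$. One must check that in all these cases there is a genuine approach sequence from inside $D_{(z,w)}$; the subtle case is when $z$ sits on the sector boundary, where one may need to wiggle both $z$ and $u$ simultaneously and invoke that $F^N$ opens up the domain (Proposition~\ref{tododomain} was stated for $z, u \in V_\delta$, i.e. the open sector, so one should perturb $q$ slightly into the open sector first). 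I expect this to be routine given the estimates already established, but it is where the care is needed.
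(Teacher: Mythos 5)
Your proof follows essentially the same strategy as the paper: both split the claim into disjointness, $\Gamma \cap \Omega = \emptyset$, and density, $\Gamma \subset \overline{\Omega}$, and both ultimately rest on Proposition~\ref{tododomain} (and its consequence Corollary~\ref{partCbound}). The disjointness step is identical. For density, the paper argues by contradiction — using $F^{-1}(\Omega)=\Omega$, a ball around $p$ missing $\Omega$ would be pushed forward to a ball around $F^k(p)\in\partial D_{(z,w)}$ missing $D_{(z,w)}$, which is absurd — whereas you build an explicit approximating sequence; the two versions are interchangeable in substance.

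One small incoherence to fix: you impose $|z|<|u|$ on the perturbed points $(z,\gamma(z)+u)$ and then let $u\to 0$, but since the first coordinate $z\in V_\epsilon$ is nonzero these cannot hold simultaneously. The constraint is simply unnecessary — Proposition~\ref{tododomain} asks only that $z\in V_\delta$ and $u=w-\gamma(z)\in V_\delta$, and it is the application of $F^N$ that lands one in the set where $|z_N|<|u_N|$ — so dropping it repairs the limit. (You could also perturb $p$ itself rather than $q=F^N(p)$, removing the extra appeal to continuity of $F^{-N}$.) Your closing worry about sector-edge cases does not arise: for $p\in\mathcal{C}\cap B_\delta(O)$ the first coordinates of $p$ and $q$ lie in the \emph{open} sector, so the only degeneracy is $u=0$, which is exactly what the $u$-perturbation handles.
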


\begin{proof}
First we claim that for $K$ large, $F^K(\mathcal{C}) \subset \partial D_{(z,w)}$. Assume the claim is proved. By definition $\Gamma = \bigcup_{n\geq0}F^{-n}(\mathcal{C})$ and $\Omega = \bigcup_{n\geq0}F^{-n}(D_{(z,w)})$.
Let $p \in \Gamma$, then $p \notin \Omega$, since iterating forward a finite number of times we should be in $\mathcal{C} \cap D_{(z,w)}$ which is empty. If $p \notin \partial \Omega$, then there exits a small ball around $p$, which is entirely outside $\Omega$. By iterating forward with $F$ a finite number of times, we should have a ball around $F^{k}(p) \in F^K(\mathcal{C}) \in \partial D_{(z,w)}$, but by shrinking enough, the whole ball should be outside $D_{(z,w)}$, which is a contradiction. Therefore $p \in \partial \Omega$. So $\Gamma \subset \partial \Omega$. Now, we prove the claim.
We have seen that $F(\mathcal{C}) \subset \mathcal{C}$. Let $\delta > 0$ and $N>0$ be as in corollary \ref{partCbound}. We proved $F^N(\mathcal{C} \cap B_\delta(0))\subset \partial D_{(z,w)}$. Since $\mathcal{C}$ is invariant, attracted to the origin by $F$, there exists $M$ large enough so $F^M(\mathcal{C}) \subset \mathcal{C} \cap B_\delta(0)$. So, $F^{M+N}(\mathcal{C}) \subset \partial D_{(z,w)}$ and the claim is proved.
\end{proof}

For $\Omega$ as in \eqref{defomega}:
\begin{equation*}
\Omega = \bigcup_{n\geq1}F^{-n}(D'_{(z,w)})
\end{equation*}
we can extend our coordinate $\psi = \mu\circ\phi$ to all of $\Omega$ in the usual way: 
Given $(z,w) \in \Omega$, there exists some $N$ such that $(z_N, w_N):= F^N(z,w) \in D'_{(z,w)}$. We define:
\begin{eqnarray*}
\psi(z,w) := \psi(z_N, w_N) + N
\end{eqnarray*}
It is straightforward to show that $\psi$ is well defined. From the equation it follows that $\psi(\Omega)$ covers $\mu(D'_{(x,y)}) + N$ for all $N$. Since $\psi(D'_{(z,w)})= \mu(D'_{(x,y)}) = U_{2R}$, for $R$ fixed, then we have:
\begin{equation}\label{firstcoordsurjective}
\psi(\Omega) = \CC
\end{equation}

Now we want to choose new coordinates in each fiber of $\psi$ and prove that $\Omega$ is biholomorphic to $\CC^2$.

\section{New coordinates on the fibers of $\psi$}

We start by proving that each fiber is connected and simply connected.

\begin{proposition}
For each $t \in \CC$, $\psi^{-1}(t)$ is connected and simply connected. 
\end{proposition}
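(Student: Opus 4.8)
The plan is to exploit the fact that $\psi^{-1}(t)$ can be exhausted by preimages of the ``core'' fiber sitting inside $D'_{(z,w)}$, and that the core fiber is manifestly connected and simply connected because the relevant map $\mu_{y_o}$ is essentially a one-variable biholomorphism onto its image. Concretely, I would first work in the $(t,y)$ coordinates. In $D'_{(t,y)}$ the fiber $\psi^{-1}(t)$ is literally $\{(t,y)\colon y\in U_R,\ |y|<|t|/2\}$, which is a slice of $U_R$ by a disk-like condition, hence connected and simply connected (it is a subdomain of the convex-in-$\log$ sector $U_R$ cut out by the single inequality $|y|<|t|/2$; one checks it is star-shaped with respect to $-R$ in the $y$-variable, or directly that it is biholomorphic to a disk). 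This handles the portion of the fiber lying in $D'_{(z,w)}$.

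Next I would pass to the global fiber. Since $\Omega=\bigcup_{n\ge 0}F^{-n}(D'_{(z,w)})$ and $\psi\circ F=\psi-1$, we have
\[
\psi^{-1}(t)\cap\Omega=\bigcup_{n\ge 0}F^{-n}\bigl(\psi^{-1}(t+n)\cap D'_{(z,w)}\bigr),
\]
an increasing union (in $n$) of the sets $V_t^{(n)}:=F^{-n}(\psi^{-1}(t+n)\cap D'_{(z,w)})$, because $F^k(D'_{(z,w)})\subset D'_{(z,w)}$ eventually and $\psi^{-1}(s)\cap D'_{(z,w)}$ is nonempty for every $s$ with $\Re s$ large (this uses $\psi(D'_{(z,w)})=U_{2R}$, already established). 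Each $V_t^{(n)}$ is biholomorphic to the core fiber $\psi^{-1}(t+n)\cap D'_{(z,w)}$ since $F$ is an automorphism, hence each $V_t^{(n)}$ is connected and simply connected. The sets $V_t^{(n)}$ are nested and their union is $\psi^{-1}(t)$, so connectedness of the union is immediate. For simple connectedness I would invoke the standard fact that an increasing union of simply connected open subsets of a complex manifold, each contained in the next, is simply connected (any loop and any disk bounding it lie in some $V_t^{(n)}$ by compactness).

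The step I expect to be the main obstacle is verifying that the inclusions $V_t^{(n)}\subset V_t^{(n+1)}$ genuinely hold, i.e. that $F^{-1}$ maps $\psi^{-1}(t+n)\cap D'_{(z,w)}$ into $F^{-n-1}(\ldots)$ correctly — equivalently, that pulling back by $F$ does not ``lose'' part of the fiber and that the core slices $\psi^{-1}(s)\cap D'_{(z,w)}$ really do exhaust as $\Re s\to\infty$ in a way compatible with the forward invariance $F(D'_{(z,w)})\subset D'_{(z,w)}$. This is where the asymptotics $|x_n|\sim n$, $|y_n|\sim\log n$ and the precise shape of $D'_{(t,y)}=\{|y|<|t|/2\}$ are used: one must check that for $(z,w)\in\psi^{-1}(t)\cap\Omega$ the forward orbit enters $D'_{(z,w)}$ and stays, so that the point is captured by some $V_t^{(n)}$, and that the core slice is never empty for the relevant parameter values. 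Once the exhaustion and nesting are in place, connectedness and simple connectedness follow formally from the one-variable picture of the core fiber together with the topological lemma on increasing unions.
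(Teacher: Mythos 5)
Your approach matches the paper's: exhaust $\psi^{-1}(t)$ as an increasing union of $W_n := \psi^{-1}(t)\cap F^{-n}(D'_{(z,w)})$, use $F^n$ followed by $\Theta$ to identify each $W_n$ biholomorphically with a region of the form $\{t-n\}\times T_{R,2|t-n|}$ in the $y$-plane, which is plainly connected and simply connected, and conclude by the standard increasing-union argument (which you state explicitly and the paper leaves implicit). One sign slip to fix: since $\psi\circ F=\psi-1$, the core slice that $W_n$ maps onto under $F^n$ is $\psi^{-1}(t-n)\cap D'_{(z,w)}$, not $\psi^{-1}(t+n)\cap D'_{(z,w)}$, and $U_{2R}$ is a sector about the \emph{negative} real axis, so the nonemptiness criterion for these slices is that $t-n$ lie deep in $U_{2R}$ (i.e.\ $\Re(t-n)\to-\infty$ as $n\to\infty$), not ``$\Re s$ large'' positive as you wrote; this is a consistent reversal that does not change the substance of your argument once corrected throughout.
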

\begin{proof}
For each $t \in \CC$ we can exhaust $\psi^{-1}(t)$ by the sequence:
$$
\psi^{-1}(t) = \bigcup_{n\geq0} W_n
$$
where
$$
W_n := \psi^{-1}(t) \cap F^{-n}(D'_{(z,w)}), 
$$
We have the following biholomorphism:
$$
F^{n}: \psi^{-1}(t) \cap F^{-n}(D'_{(z,w)}) \to \psi^{-1}(t-n) \cap D'_{(z,w)}
$$
From the last section we have that $\Theta$ biholomorphically maps $(z,w) \to (\psi(z,w), y)$.
Therefore for $n$ large enough, $\Theta=(t,y)$ maps $\psi^{-1}(t-n) \cap D'_{(z,w)}$ biholomorphically to $\{t-n\} \times T_{R, 2|t-n|}$ where we introduce the new notation
\begin{align}\label{tdefin}
T_{b,a} := \{\zeta \in \CC : \zeta \in U_b, |\zeta| < a\}
\end{align}
for $b<a$. Clearly, for $R$ large enough, $T_{R,2|t-n|}$ is connected and simply connected.
\end{proof}

We would like to prove now that each fiber is biholomorphic to $\CC$. As before, defining a function in $D'_{(z,w)}$ is equivalent to defining a function on $D'_{(t,y)}$. We will define a function $\xi$ on $D'_{(t,y)}$ and therefore, by composing with $\Theta$ we will get a function $\Upsilon$ in $D'_{(z,w)}$.

In the commutative diagram before we know the form for $F$, for $\tilde{F}$ but we have not computed the exact form for $\tilde{\tilde{F}}$. We will not need to compute it explicitly but have an estimate so we can map each fiber to $\CC$. 

We have:
\begin{align*}
x_1 &= x - 1 + g\left(\frac{1}{y}\right) + \frac{c}{x} + O\left(\frac{1}{x^2}, \frac{1}{x^2y}, \frac{1}{xy^2}\right)\\
y_1 &= y + \frac{1}{x} + O\left(\frac{1}{x^2},\frac{1}{x^2y}\right)
\end{align*}

and we have:
\begin{eqnarray*}
t_1 &=& t - 1\\
y_1 &=& G(x,y) = H(t,y) 
\end{eqnarray*}

We will not find $H$ exactly, but instead we will see it in terms of $t$ and $x$. For that purpose we use:
$$t = x + r\log x + s\log y + x\beta(y) + \eta(x,y).$$
Now:
\begin{eqnarray*}
\frac{1}{t} - \frac{1}{x} &=& \frac{1}{x + r\log x + s\log y + x\beta(y) + \eta} - \frac{1}{x}\\
&=& \left(-r\log x - s\log y - x\beta(y) - \eta \right)\frac{1}{x^2\left[1 + r\log x/x + s\log y/x + \beta(y) + \eta/x\right]}\\
&=& \left(-r \frac{\log x}{x^2} - s\frac{\log y}{x^2} - \frac{\beta(y)}{x} - \frac{\eta}{x^2} \right)\frac{1}{1 + r\log x/x+ s\log y/x + \beta(y) + \eta/x}\\
&=& \left(\frac{-r\log x}{x^2} - \frac{s\log y}{x^2} - \frac{\beta(y)}{x} - \frac{\eta}{x^2}\right)\left[1 + O\left(\frac{\log x}{x}, \frac{\log y}{x}, \beta(y), \frac{\eta}{x}\right)\right]\\
&=& - \frac{\beta(y)}{x} + O\left(\frac{\beta(y)^s2}{x},\frac{\log x}{x^2}\right)\\ &=& -\frac{1}{xy} + O\left(\frac{1}{xy^2},\frac{1}{x^{4/3}}\right).
\end{eqnarray*}
So, we have:
\begin{eqnarray*}
- \frac{1}{t} + \frac{1}{x} =  \frac{1}{xy} + O\left(\frac{1}{xy^2},\frac{1}{x^{4/3}}\right),
\end{eqnarray*}
where we write these terms since they are the largest ones asymptotically.
The term that we need to cancel is therefore $1/xy$. We can use the equation \eqref{logychange}, which tells us:
$$
\log(y_{n+1}) - \log(y_n) = \frac{1}{x_ny_n} + O\left(\frac{1}{x_n^2y_n},\frac{1}{x_ny_n^2}\right)
$$
to get rid of $1/xy$.
\begin{proposition}
Define:
$$
\xi_n(t,y) = y_n - \log(y_n) + \log(t_n)
$$
Then $\xi_n$ is Cauchy with the uniform norm topology. 
\end{proposition}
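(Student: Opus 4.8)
The plan is to show that the successive differences $\xi_{n+1}-\xi_n$ are bounded, uniformly on $D'_{(t,y)}$, by a summable sequence; then $\{\xi_n\}$ is uniformly Cauchy. Throughout, the branch of $\log$ is the one fixed on $U_R$, which applies to both $y_n$ and $t_n$ since the forward orbit stays in $U_R$ (and, for the $t$-coordinate, in $U_{2R}\subset U_R$, this being invariant under $t\mapsto t-1$).

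First I would write $\xi_{n+1}-\xi_n = (y_{n+1}-y_n) - (\log y_{n+1}-\log y_n) + (\log t_{n+1}-\log t_n)$ and expand each of the three increments. Because the $y$-coordinate is unchanged by both $\phi$ and $\Theta$, the expansion of the second coordinate of $\tilde F$ recorded above gives $y_{n+1}-y_n = \tfrac{1}{x_n} + O\!\left(\tfrac{1}{x_n^2},\tfrac{1}{x_n^2 y_n}\right)$; equation \eqref{logychange} gives $\log y_{n+1}-\log y_n = \tfrac{1}{x_n y_n} + O\!\left(\tfrac{1}{x_n^2 y_n},\tfrac{1}{x_n y_n^2}\right)$; and $t_{n+1}=t_n-1$ gives $\log t_{n+1}-\log t_n = \log\!\left(1-\tfrac{1}{t_n}\right) = -\tfrac{1}{t_n}+O\!\left(\tfrac{1}{t_n^2}\right)$. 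Adding these, $\xi_{n+1}-\xi_n = \left(\tfrac{1}{x_n}-\tfrac{1}{t_n}\right) - \tfrac{1}{x_n y_n} + O\!\left(\tfrac{1}{x_n^2},\tfrac{1}{x_n y_n^2},\tfrac{1}{t_n^2}\right)$.

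The crucial step is the cancellation coming from the asymptotic identity derived just above this proposition, namely $\tfrac{1}{x_n}-\tfrac{1}{t_n} = \tfrac{1}{x_n y_n} + O\!\left(\tfrac{1}{x_n y_n^2},\tfrac{1}{x_n^{4/3}}\right)$, itself a consequence of $t = x + r\log x + s\log y + x\beta(y)+\eta$ together with $\beta(y)=\tfrac{1}{y}+O(1/y^2)$. Substituting, the two copies of $\tfrac{1}{x_n y_n}$ cancel and one is left with $\xi_{n+1}-\xi_n = O\!\left(\tfrac{1}{x_n^{4/3}},\tfrac{1}{x_n y_n^2},\tfrac{1}{x_n^2},\tfrac{1}{t_n^2}\right)$, with implied constants uniform on $D'_{(t,y)}$. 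Now invoke the growth rates of Proposition~\ref{prop:degenerateopenregion}, transported to the $(x,y)$-coordinates via $x=1/z$, $y=1/(w-\gamma(z))$: one has $|x_n|\sim n$, $|y_n|\sim\log n$, and $|t_n|=|t-n|\gtrsim n$ (uniformly, since $U_{2R}$ lies in a sector about the negative real axis, so $|t_n|$ increases). Hence $\sum_n 1/|x_n|^{4/3}\lesssim\sum_n 1/n^{4/3}$, $\sum_n 1/(|x_n||y_n|^2)\lesssim\sum_n 1/(n\log^2 n)$, and $\sum_n\left(1/|x_n|^2+1/|t_n|^2\right)\lesssim\sum_n 1/n^2$ all converge, so $\sum_n\|\xi_{n+1}-\xi_n\|_\infty<\infty$ and $\{\xi_n\}$ is Cauchy in the uniform norm on $D'_{(t,y)}$.

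I expect the main obstacle to be arranging the $\tfrac{1}{x_n y_n}$ cancellation with error terms sharp enough to be summable. In particular the term $\tfrac{1}{x_n y_n^2}\sim\tfrac{1}{n\log^2 n}$ is summable only because $|y_n|$ genuinely grows like $\log n$ — a crude bound such as $|y_n|\ge R$ would leave a divergent $\sum 1/n$ — so the argument really uses the full strength of the estimate $|w_n|\sim 1/\log n$ from Proposition~\ref{prop:degenerateopenregion}, and one must check that this, together with $|x_n|\sim n$ and $|t_n|\gtrsim n$, holds uniformly enough over $D'_{(t,y)}$ for the convergence to be in the uniform norm.
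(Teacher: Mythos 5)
Your proposal is correct and follows essentially the same approach as the paper's proof: expand the three increments, invoke the asymptotic identity $\tfrac{1}{x_n}-\tfrac{1}{t_n}=\tfrac{1}{x_ny_n}+O(\tfrac{1}{x_ny_n^2},\tfrac{1}{x_n^{4/3}})$ derived just above the proposition to cancel the $\tfrac{1}{x_ny_n}$ term, and then sum using $|x_n|\sim n$, $|y_n|\sim\log n$, $|t_n|\sim n$. Your closing observation that the genuine $\log n$ growth of $|y_n|$ is essential (a crude lower bound $|y_n|\ge R$ would not suffice) is accurate and matches the paper's reliance on the $1/(n\log^2 n)$ bound.
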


\begin{proof}
Using the relationships worked out above, we compute:
\begin{eqnarray*}
\xi_{n+1} - \xi_n &=& \left[y_{n+1} - \log(y_{n+1}) + \log(t_{n+1})\right] -  \left[y_n - \log(y_n) + \log(t_n)\right]\\
&=& y_{n+1} - y_n - \left[\log(y_{n+1}) - \log(y_n)\right] + \left[\log(t_{n+1}) - \log(t_n)\right]\\
&=& \frac{1}{x_n} + O\left(\frac{1}{x_n^2}\right) - \frac{1}{x_ny_n} + O\left(\frac{1}{x_n^2y_n},\frac{1}{x_ny_n^2}\right)
 - \frac{1}{t_n} + O\left(\frac{1}{t^2_n}\right)\\
&=&O\left(\frac{1}{x_ny_n^2},\frac{1}{x_n^{4/3}}, \frac{1}{x_n^2}, \frac{1}{t_n^2}, \frac{1}{x_n^2y_n}\right).
\end{eqnarray*}
Using $|x_n| \sim n$ , $|t_n| \sim n$ and $|y_n| \sim \log n$, we have:
\begin{eqnarray*}
\left|\xi_{n+1} - \xi_n\right| = O\left(\frac{1}{n^2\log n}, \frac{1}{n^{4/3}}, \frac{1}{n\log^2 n}\right) = O\left(\frac{1}{n\log^2n}\right)
\end{eqnarray*}
which is summable.
\end{proof}

Therefore we can define:
\begin{equation*}
\xi(t,y) := \lim_{n \to \infty} \xi_n(t,y).= \xi_0(t,y) + \eta'(t,y),
\end{equation*}
where $\eta'(t,y)$ is a bounded function.
Summarizing we get:
\begin{equation}\label{Upsilondefinition}
\Upsilon(z,w) := \xi(t,y) = y - \log(y) + \log(t) + \eta'(t, y).
\end{equation}

Notice here that
\begin{align*}
\left|\xi(t,y) - y\right| = \left| - \log(y) + \log(t) + \eta'(t,y)\right| < 2\log|y| + \log|t|
\end{align*}
which means that $\Upsilon(z,w)$, or equivalently $\xi(t,y)$, is still close to $y$, when $t$ is fixed. We will be able to extend this function $\xi$ to a larger domain in $D'_{(z,w)}$. In the next proposition we give an estimate for the image of $D'_{(t,y)}$ under the map $\xi$. Increase $R$ if necessary so we have $2R- 2\log 2R > R + 2\log R$. 

\pagebreak[2]
\begin{lemma}\label{Upsestimate}
Given the function $\xi$ defined as in equation \eqref{Upsilondefinition}, we have:
\begin{enumerate}
\item Fix $t_o \in U_{4R}$, we call $D^{t_o}= \{y \in \CC; (t_o,y) \in D'_{(t,y)}\}$.  Then $\xi_{t_o}(y)$ is injective in $D^{t_o}$ and $\xi(\{t_o\}\times D^{t_o}) \supset  T'_{R+2\log R, \frac{|t_o|}{2} - 2\log\frac{|t_o|}{2}} + \log(t_o)$. Where $T'_{a,b} = \{z \in \CC; a<|z|<b, |\Arg(z) - \pi|<\pi/20\}$ and $U + w = \{z+w: z\in U \}$.
\item For $(z,w) \in D_{(z,w)}$, we have:
$$
\Upsilon(z_1,w_1) = \Upsilon(z,w)
$$
\end{enumerate}
\end{lemma}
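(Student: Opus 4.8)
The plan is to prove the two parts separately. Part (2) is a formal consequence of the fact that $\xi$ is the telescoping limit of the $\xi_n$. Write $(t_n,y_n)=\tilde{\tilde{F}}^{\,n}(t,y)$ for the forward orbit in the $(t,y)$-coordinates, so that by definition $\xi(t,y)=\lim_{n\to\infty}\xi_n(t,y)$ with $\xi_n(t,y)=y_n-\log(y_n)+\log(t_n)$. Since $\tilde{\tilde{F}}^{\,n}\big(\tilde{\tilde{F}}(t,y)\big)=\tilde{\tilde{F}}^{\,n+1}(t,y)=(t_{n+1},y_{n+1})$, we get $\xi_n\big(\tilde{\tilde{F}}(t,y)\big)=\xi_{n+1}(t,y)$ for every $n$, and letting $n\to\infty$ gives $\xi\big(\tilde{\tilde{F}}(t,y)\big)=\xi(t,y)$. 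Transporting this through $\Theta$, and using that $F(D'_{(z,w)})\subset D'_{(z,w)}$ so that both $(z,w)$ and $(z_1,w_1)$ lie in the domain of $\Upsilon$, this reads exactly $\Upsilon(z_1,w_1)=\Upsilon(z,w)$. Part (1) is the analytic core: the idea is to regard $\xi$, fibrewise, as a small perturbation of the shear $y\mapsto y-\log y$ plus the additive constant $\log(t_o)$, and the rest of the plan makes this precise.

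\emph{Injectivity.} Fix $t_o\in U_{4R}$ and write $\xi_{t_o}(y)=y-\log y+\log(t_o)+\eta'(t_o,y)$ on the convex set $D^{t_o}=\{y\in U_R:|y|<|t_o|/2\}$. Since $R$ is large we have $|1/y|<1/4$ there, and a Cauchy estimate for $\eta'(t_o,\cdot)$ on disks whose radius is a fixed fraction of $\mathrm{dist}(y,\partial D^{t_o})$, combined with the decay of $\eta'$ coming from the summability bound $|\xi_{n+1}-\xi_n|=O\big(1/(n\log^2 n)\big)$ along the orbit, yields $|\partial_y\eta'(t_o,y)|<1/2$ on $D^{t_o}$. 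Hence $\Re\big(\xi_{t_o}'(y)\big)=\Re\big(1-1/y+\partial_y\eta'\big)>1/4>0$ on the convex set $D^{t_o}$, and from $\xi_{t_o}(y_1)-\xi_{t_o}(y_2)=(y_1-y_2)\int_0^1\xi_{t_o}'\big((1-s)y_2+sy_1\big)\,ds$, whose integral factor has positive real part, injectivity follows.

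\emph{The image.} Given $w$ in the candidate sector $T'_{R+2\log R,\,|t_o|/2-2\log(|t_o|/2)}+\log(t_o)$ (which is nonempty for $R$ large, by the normalization $2R-2\log 2R>R+2\log R$), solve $\xi_{t_o}(y)=w$ by rewriting it as the fixed-point equation $y=\Phi(y):=w-\log(t_o)+\log y-\eta'(t_o,y)$. On the region where the previous estimates hold $\Phi$ satisfies $|\Phi'|=|1/y-\partial_y\eta'|<1/2$, so it is a contraction; the narrowing of the opening angle from $\pi/8$ to $\pi/20$ and the radial cushions $2\log R$ and $2\log(|t_o|/2)$ are chosen precisely so that a disk about $w-\log(t_o)$ of radius comparable to $\log|w|$ both lies inside $D^{t_o}$ and is mapped into itself by $\Phi$. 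The contraction principle then produces the (unique) preimage $y\in D^{t_o}$, whence $\xi(\{t_o\}\times D^{t_o})\supset T'_{R+2\log R,\,|t_o|/2-2\log(|t_o|/2)}+\log(t_o)$.

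The main obstacle is the uniform control of $\eta'$ and $\partial_y\eta'$ on $D^{t_o}$. Near the inner boundary $|y|\approx R$ the sector has width comparable to $|y|$, so Cauchy estimates are cheap; but near the outer truncation $|y|\approx|t_o|/2$ the available cushion is only of size $\sim\log|t_o|$, so one must genuinely exploit the extra decay $\eta'=O(1/\log|t_o|)$ in that regime — obtained by evaluating the telescoping sum for $\eta'$ along the orbit — to keep $\partial_y\eta'$ small. Once these perturbative bounds are secured, both the injectivity of $\xi_{t_o}$ and the covering of the truncated sector reduce to the standard shear-perturbation and contraction arguments sketched above.
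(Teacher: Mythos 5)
Your part (2) is the same argument as the paper's; both are routine telescoping. For part (1), however, you take a genuinely different analytic route. The paper proves surjectivity onto the truncated sector geometrically: it uses the single estimate $|\xi_{t_o}(y)-\log(t_o)-y|<2\log|y|$, tracks the image of each arc $\{|y|=S\}$ for $R<S<|t_o|/2$, and reads off the argument and modulus of $\xi_{t_o}(y)$ (the angle shrinking from $\pi/8$ to $\pi/20$ and the radial loss of $2\log S$ come from exactly this boundary bookkeeping). You instead set up a fixed-point equation $y=\Phi(y)$ and invoke the contraction principle. Your version is, if anything, more explicit than the paper's (the paper's ``run $S$ from $R$ to $|t_o|/2$'' and ``it is immediate that $\xi_{t_o}$ is injective'' are terse), but it relies on derivative estimates for $\eta'$ that the paper avoids by working directly with the $C^0$ bound.

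One concrete gap: you assert that $D^{t_o}=\{y\in U_R:|y|<|t_o|/2\}$ is convex, and your injectivity argument (integrating $\xi'_{t_o}$ along the segment) and your contraction argument both lean on this. It is not convex; $U_R$ is the part of a sector outside the disk $|\zeta|\le R$, so $D^{t_o}$ is a truncated annular sector whose inner arc makes it non-convex. The fix is cheap: a chord between two points of $D^{t_o}$ stays in the slightly larger set $\{|\Arg\zeta-\pi|<\pi/8,\ |\zeta|>R\cos(\pi/8),\ |\zeta|<|t_o|/2\}$, and since the paper constructs $\eta'$ (hence $\xi$) on an open neighbourhood of $\overline{U_R}$ after increasing $R$, your derivative bounds hold on that larger set; alternatively, pass to $\zeta=\log y$, under which the sector becomes a convex half-strip. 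Either repair makes your segment and contraction arguments correct. You should also be slightly more careful about the exact decay rate of $\eta'$ near $|y|\approx|t_o|/2$ (your heuristic $\eta'=O(1/\log|t_o|)$ is plausible and in fact conservative, since the telescoping tail gives more, but the Cauchy estimate there needs a disk staying inside $\{|y|<|t_o|/2\}$, which is exactly the $2\log(|t_o|/2)$ cushion built into the statement).
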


\begin{proof}
It is immediate that $\xi_{t_o}$ is injective, by using the relationship
$$
|\xi_{t_o}(y) - \log(t_o) - y| < 2\log|y|.
$$ 
Recall that $D'_{(t,y)} = \{(t,y) \in U_{2R}\times U_R; |y| < |t|/2 \}$. Fix $y_o \in U_{4R}$, the fiber is $D^{t_o} =\{y \in U_R, |y| < |t_o|/2\}$. Recall the definition of $T_{a,b}$ from equation \eqref{tdefin}, we have then $D^{t_o} = T_{R,|t_o|/2}$. Since $|\xi_{t_o}(y) - \log(t_o) - y| < 2\log|y|$ and for $y \in D^{t_o}$ we have $R<|y|<|t_o|/2$. Choose $R$ large enough so we have $2R\sin(\pi/8) > 4\log R$, which implies $2S\sin(\pi/8) > 4\log S$, for any $S >R$. Fix, $S$ any number $R<S<|t_o|/2$. We have $y \in D^{t_o}, |y| =S$ if and only if $y=Se^{i\pi\theta}$, where $|\theta - \pi| < \pi/8$. Then $|\xi_{t_o}(y)-\log(t_o)- y| < 2\log S$ i.e. $\xi_{t_o}(y) - \log(t_o) = Se^{i\pi\theta} + 2\log S e^{i\alpha}$, where $|\theta - \pi|<\pi/8$ and $\alpha$ is any number. Using the relation $2S\sin(\pi/8) > 4\log S$ we get $\Arg(\xi_{t_o}(y)) > \pi/20$. Also, $|\xi_{t_o}(y) - \log(t_o)|<|y| + 2\log|y| = S+2\log S $ and $|\xi_{t_o}(y)|>S -2\log S$. Run $S$ from $R$ to $t_o/2$ and we will get that $\xi_{t_o}(D^{t_o}) \supset T'_{R+2\log R,\frac{|t_o|}{2} - 2\log \frac{|t_o|}{2}} + \log(t_o)$ and the first claim is proved.
  
Now let us look at the relationship between $\Upsilon(z_1,w_1)$ and $\Upsilon(z,w)$: 
\begin{eqnarray*}
\Upsilon(z_1, w_1) &=&\xi(t_1, y_1) \\
&=& \lim_{n \to \infty} \left(y_{n+1} - \log(y_{n+1}) + \log(t_{n+1})\right)\\
&=& \lim_{n \to \infty} \left(y_{n} - \log(y_{n}) +\log(t_n)\right)\\
&=& \xi(t,y)\\
&=& \Upsilon(z,w)
\end{eqnarray*}
where $t = \psi(z,w)$. This completes the proof of the second claim.
\end{proof}

We extend $\Upsilon$ to $\{ p \in \Omega : t = \psi(p) \in U_{4R}\}$ by defining:
\begin{eqnarray}\label{Upsrecurse}
\Upsilon(p) = \Upsilon(F^n(p))
\end{eqnarray}
where $n$ is chosen so that $F^n(p) \in D'_{(z,w)}$. It is clear that this 
definition is independent of $n$ and that, for $t \in U_{4R}$, the mapping from $\psi^{-1}(t)$ to $\CC$ defined by $\Upsilon_t(p) = \Upsilon(t(p),y(p))$ is injective.

\begin{lemma}
The mapping $\Upsilon_t$ defined above is a biholomorphism of $\psi^{-1}(t)$ onto $\CC$, for every $t \in  U_{4R}$.
\end{lemma}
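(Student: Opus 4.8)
The plan is to reduce everything to surjectivity. We already know that $\Upsilon_t$ is holomorphic and injective on $\psi^{-1}(t)$, and that $\psi^{-1}(t)$ is connected and simply connected, so once we show $\Upsilon_t\big(\psi^{-1}(t)\big) = \CC$ it will follow that $\Upsilon_t$ is a biholomorphism, since an injective holomorphic map of a Riemann surface onto $\CC$ has nowhere-vanishing derivative and is therefore biholomorphic. To prove surjectivity I would use the exhaustion
\[
\psi^{-1}(t) = \bigcup_{n \ge 0} W_n, \qquad W_n := \psi^{-1}(t) \cap F^{-n}(D'_{(z,w)}),
\]
from the proof that each fiber is connected and simply connected, together with the two facts established there: $F^n$ restricts to a biholomorphism of $W_n$ onto $\psi^{-1}(t-n) \cap D'_{(z,w)}$, and $\Theta = (t,y)$ maps the latter biholomorphically onto $\{t-n\} \times D^{t-n}$, where $D^{t-n} = \{y : (t-n,y) \in D'_{(t,y)}\}$ as in Lemma~\ref{Upsestimate}.

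Since $\Upsilon$ is $F$-invariant (Lemma~\ref{Upsestimate}(2)) and coincides with $\xi \circ \Theta$ on $D'_{(z,w)}$, chasing a point $p \in W_n$ through these identifications, using the recursion \eqref{Upsrecurse} and the definition \eqref{Upsilondefinition}, gives $\Upsilon_t(p) = \xi_{t-n}\big(y(F^n(p))\big)$; hence $\Upsilon_t(W_n) = \xi_{t-n}(D^{t-n})$ and therefore
\[
\Upsilon_t\big(\psi^{-1}(t)\big) = \bigcup_{n \ge 0} \xi_{t-n}(D^{t-n}).
\]
If $t \in U_{4R}$ then $t - n \in U_{4R}$ for every $n \ge 0$, because subtracting a positive integer strictly increases $|t-n|$ and moves $\Arg(t-n)$ closer to $\pi$; so Lemma~\ref{Upsestimate}(1) applies with $t_o = t-n$ and gives
\[
\xi_{t-n}(D^{t-n}) \supset T'_{a,\, b_n} + \log(t-n), \qquad a := R + 2\log R, \quad b_n := \frac{|t-n|}{2} - 2\log\frac{|t-n|}{2}.
\]

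The final step, and the only delicate one, is to verify that $\bigcup_{n \ge 0}\big(T'_{a,\, b_n} + \log(t-n)\big) = \CC$. Writing $w_n = \log(t-n)$, we have $\Re w_n = \log|t-n| \to +\infty$ while $\Im w_n = \Arg(t-n)$ stays bounded, so for a fixed $\zeta \in \CC$ the point $\zeta - w_n$ has real part tending to $-\infty$ and bounded imaginary part. Consequently, for all sufficiently large $n$ we have $|\Arg(\zeta - w_n) - \pi| < \pi/20$ and $|\zeta - w_n| > a$, whereas $|\zeta - w_n| \le |\zeta| + |w_n| = O(\log|t-n|)$, which eventually lies far below $b_n \sim |t-n|/2$. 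Hence $\zeta - w_n \in T'_{a,\, b_n}$ for all large $n$, i.e. $\zeta \in \xi_{t-n}(D^{t-n}) \subset \Upsilon_t\big(\psi^{-1}(t)\big)$, and surjectivity is proved. The crux is exactly this geometric imbalance: the truncated sectors are translated by $w_n = \log(t-n)$, which escapes to infinity only logarithmically, while their outer radii $b_n$ grow linearly in $n$; this is why the translated sectors sweep out the whole plane, and it relies on the quantitative control of $\xi$ from Lemma~\ref{Upsestimate} together with the fact that forward iteration drives $t$ deep into the cone $U_{4R}$ where that control holds.
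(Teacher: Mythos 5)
Your argument is correct and follows the same route as the paper: exhaust the fiber by the sets $W_n$, use $F$-invariance of $\Upsilon$ together with Lemma~\ref{Upsestimate}(1) to identify $\Upsilon_t(W_n)$ with $\xi_{t-n}(D^{t-n}) \supset T'_{a,b_n} + \log(t-n)$, and observe that these translated truncated sectors exhaust $\CC$. The only difference is that you spell out the final exhaustion step (logarithmic translation versus linearly growing outer radius), which the paper dismisses with a ``clearly''; your version is the more complete write-up.
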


\begin{proof}
We want to show that it is surjective. Fix $t \in U_{4R}$. Consider again the sets $w_n$. Recall $\displaystyle{\psi^{-1}(t) = \bigcup_{n\geq0} W_{n}}$, where $W_n:= \psi^{-1}(t) \cap F^{-n}(D'_{(z,w)})$. 
We have defined $\Upsilon_t$ in $\psi^{-1}(t)$ by defining it on $W_0$ first, as in \eqref{Upsilondefinition}, and defining it recursively in $W_n$ with the relationship \eqref{Upsrecurse}. 
We will prove that $\displaystyle{\bigcup_{n \geq 0} \Upsilon_t(W_n) = \CC}$. For this purpose, we fix $n$. By definition, the image of $W_n$ by $\Upsilon_t$ is equal to the image of $\psi^{-1}(t-n) \cap D'_{(z,w)}$ by $\Upsilon_{t-n}$.
We first analyze the image of $\psi^{-1}(t-n) \cap D'_{(z,w)}$ by $\Upsilon_{t-n}$. This set is the image of $\{t-n\}\times D^{t-n}$ by $\xi_{t-n}$. Using Lemma \ref{Upsestimate} we get $\xi(\{t-n\}\times D^{t-n}) \supset T'_{R+2\log R, \frac{|t-n|}{2} - 2\log \frac{|t-n|}{2}} + \log(t-n)$. 
We see that, for a large $n$, $\Upsilon_t(W_n) \supset T'_{2R,n} + \log n$,  
Clearly we have $\bigcup_{n>N} T'_{2R,n} + \log n =\CC$, for any $N$.
This concludes the proof.
\end{proof}

We now have that the mapping $(\psi,\Upsilon)$ is a biholomorphism of $\psi^{-1}(U_{4R})$ onto $U_{4R} \times \CC$, and furthermore, for $n \in \NN$, $(\psi, \Upsilon \circ F^{n})$ is a biholomorphism of $\psi^{-1}(U_{4R} + n)$ onto $(U_{4R}+n) \times \CC$. 

So we have the function $\psi: \Omega \to \CC$, the open cover $\{\psi^{-1}(U_{4R} + n)\}_{n \in \NN}$ of $\Omega$, and the coordinates $(\psi,\Upsilon\circ F^n)$ on each $\psi^{-1}(U_{4R} + n)$ define on $\Omega$ a structure of locally trivial fiber bundle with base $\CC$ and fiber $\CC$.

\section{Biholomorphism to $\CC^2$}

We can define the following biholomorphic map to $\CC^2$ 
\begin{eqnarray*}
(\psi, \Upsilon): \Omega \to \CC^2
\end{eqnarray*}
and
\begin{eqnarray*}
(\psi, \Upsilon)(p) &= \left(\psi(F^n(p)) - n, \Upsilon(F^n(p))\right)
\end{eqnarray*}
for $p \in \Omega$. We have:
\begin{eqnarray*}
(\psi, \Upsilon)(F(p)) = \left(\psi(p) -1, \Upsilon(p)\right)
\end{eqnarray*}
for $p \in \Omega$.
\newpage


\begin{thebibliography}{ab-br-to}

\addtolength{\leftmargin}{4in} 
\setlength{\itemindent}{-0.2in} 

\bibitem[Ab1]{ab1}
	M. Abate, \emph{The residual index and the dynamics of holomorphic maps tangent to the identity}, Duke Math. J. \textbf{107} (2001), 173--207.
	
\bibitem[Ab2]{ab2}
	M. Abate, \emph{Discrete local holomorphic dynamics}. Proceedings of 13th {S}eminar on {A}nalysis and its {A}pplications (2003), 1--31.

\bibitem[ABT]{ab-br-to}
	M. Abate, F. Bracci and F. Tovena, \emph{Index theorems for holomorphic self-maps}, Ann. of Math. (2) \textbf{159} (2004), 819--864.

\bibitem[Bu-Fo]{bu-fo}
	G. T. Buzzard and F. Forstneric , \emph{An interpolation theorem for holomorphic automorphisms of {${\bf C}\sp n$}}, J. Geom. Anal. \textbf{10} (2000), 101--108.	
	
\bibitem[Fa]{fa2}
	P. Fatou, \emph{Substitutions analytiques et \'equations fonctionelles a deux variables}, An. Sc. Ec. N. (1924), 67--142.

\bibitem[Gr]{gr}
	M. Green, \emph{Holomorphic maps into complex projective spaces omitting hyperplanes}, Trans. Amer. Math. Soc. \textbf{169} (1972), 89--103.

\bibitem[Hak1]{hak1}
	M. Hakim, \emph{Analytic transformations of {$(\bold C\sp p,0)$} tangent to the identity}, Duke Math. J. \textbf{92} (1998), 403--428.

\bibitem[Mi]{mi}
	J. Milnor, \emph{Dynamics in one complex variable}, third ed., Annals of Mathematics Studies, vol. 160, Princeton University Press, Princeton, NJ, 2006.
		
\bibitem[Pe]{pe1}	
	H. Peters, \emph{Perturbed basins of attraction}, Math. Ann. \textbf{336} (2007), 1--13.

\bibitem[Pe-Wo]{pe-wo}	
	H. Peters and E. Wold, \emph{Non-autonomous basins of attraction and their boundaries}, J. Geom. Anal. \textbf{15} (2005), 123--136.
	
\bibitem[Ro-Ru]{ro-ru}
		J. P. Rosay and W. Rudin, \emph{Holomorphic maps from {${\bf C}\sp n$} to {${\bf C}\sp n$}}, Trans. Amer. Mat. Soc. \textbf{310} (1998), 47--86.

\bibitem[St]{st}
		B. Stens{\o}nes, \emph{Fatou-{B}ieberbach domains with {$C\sp \infty$}-smooth boundary}, Ann. of Math. (2) \textbf{145} (1997), 365--377.

\bibitem[Vi]{vi}
	L. Vivas, \emph{Remarks on automorphisms of $\CC^* \times \CC^*$ and their basins}, Complex Var. Elliptic Equ. \textbf{54} (2009), 401--408.
	
\bibitem[We1]{we}
	B. Weickert, \emph{Attracting basins for automorphisms of {${\bf C}\sp 2$}}, Invent. Math. \textbf{132} (1998), 581--605.

\bibitem[We2]{we2}
	B. Weickert, \emph{Automorphisms of {${\bf C}\sp n$}}.PhD Thesis, University of Michigan, (1997).

\bibitem[Wo1]{wo1}
    E. F. Wold, \emph{Fatou-{B}ieberbach domains}, Internat. J. Math. \textbf{16} (2005), 1119--1130.

\bibitem[Wo2]{wo2}
    E. F. Wold, \emph{A {F}atou-{B}ieberbach domain in {$\Bbb C\sp 2$} which is not {R}unge}, Math. Ann. \textbf{340} (2008), 775--780.
 

\end{thebibliography}
\end{document}